\definecolor{e-mail}{rgb}{0,.40,.80}
\definecolor{reference}{rgb}{.20,.60,.22}
\definecolor{mrnumber}{rgb}{.80,.40,0}
\definecolor{citation}{rgb}{0,.40,.80}
\definecolor{gris25}{gray}{0.60}
\theoremstyle{plain}
\newtheorem{lemma}{Lemma}[section]
\newtheorem{theorem}[lemma]{Theorem}
\newtheorem{corollary}[lemma]{Corollary}
\newtheorem{maintheorem}{Theorem}
\newtheorem{proposition}[lemma]{Proposition}
\theoremstyle{definition}
\newtheorem{definition}[lemma]{Definition}
\newtheorem{remark}[lemma]{Remark}
\newtheorem{example}[lemma]{Example}
\newtheorem{conjecture}[lemma]{Conjecture}
\newtheorem{fact}[lemma]{Fact}
\newtheorem{observation}[lemma]{Observation}
\newtheorem{proposition.definition}[lemma]{Proposition/Definition}
\theoremstyle{remark}
\newtheorem{warn}[subsection]{Warning}
\DeclareDocumentCommand{\Hurw}{O{G} O{g}} {\mathscr{M}_{#2}[#1]}
\DeclareDocumentCommand{\Mgm}{ O{g} O{m}}{\mathscr{M}_{#1,#2}}
\DeclareDocumentCommand{\Mzero}{ O{m}}{\Mgm[0][#1]}
\DeclareDocumentCommand{\Mdmgm}{ O{g} O{m}}{\overline{{M}}_{#1,#2}}
\DeclareDocumentCommand{\Mdmzero}{ O{m}}{\Mdmgm[0][#1]}
\DeclareDocumentCommand{\res}{O{G} O{H}} {\mathrm{res}^{#1}_{#2}}
\DeclareDocumentCommand{\cores}{O{G} O{P}} {\mathrm{cores}^{#1}_{#2}}
\DeclareMathOperator{\im}{im}
\DeclareMathOperator{\coker}{coker}
\DeclareMathOperator{\rk}{rk}
\DeclareMathOperator{\tot}{Tot}
\DeclareMathOperator{\sym}{Sym}
\DeclareMathOperator\rspec{\mathscr{S}\kern-.5pt \textit{pec }}
\DeclareMathOperator\rproj{\mathscr{P}\kern-.5pt \textit{roj }}
\newcommand{\ol}{\overline}
\newcommand{\ul}{\underline}
\newcommand*{\shom}{\mathscr{H}\kern -.5pt om}
\newcommand*{\send}{\mathscr{E}\kern -.5pt nd}
\newcommand*{\sder}{\mathscr{D}\kern -.5pt er}
\newcommand*{\stor}{\mathscr{T}\kern -.5pt or}
\newcommand*{\sext}{\mathscr{E}\kern -.5pt xt}
\newcommand{\QQ}{\mathbb Q}
\newcommand{\CC}{\mathbb C}
\newcommand{\ZZ}{\mathbb Z}
\newcommand{\RR}{\mathbb R}
\newcommand{\HH}{\mathbb H}
\newcommand{\kk}{\Bbbk}
\newcommand{\FF}{{\mathbb F}}
\renewcommand{\P}{\mathbb P}
\newcommand{\A}{\mathbb A}
\newcommand{\schur}{\mathbb{S}}
\newcommand{\EE}{\mathbb E}
\newcommand{\Gm}{{\mathbb G}_m}
\newcommand{\spec}{{\mathrm{Spec}\,}}
\newcommand{\inv}{{\mathrm{inv}}}
\newcommand{\crem}{{\mathrm{Crem}}}
\newcommand{\tr}{\mathrm{Tr}}
\newcommand{\be}{\mathbf{e}}
\newcommand{\uJ}{{\ul{J}}}
\newcommand{\mat}{{\mathrm{Mat}}}
\newcommand{\rmb}{\mathrm{B}}
\newcommand{\rmm}{\mathrm{M}}
\newcommand{\ratto}{\;\tikz{\draw[dashed, ->] (0,0) -- (0.5,0);}}
\newcommand\sca{\mathcal A}
\newcommand\sce{\mathcal E}
\newcommand\scf{\mathcal F}
\newcommand\scg{\mathcal G}
\newcommand\sch{\mathcal H}
\newcommand\sck{\mathcal K}
\newcommand\scl{\mathcal L}
\newcommand\scm{\mathcal M}
\newcommand\scn{\mathcal N}
\newcommand\sco{\mathcal O}
\newcommand\scq{\mathcal Q}
\newcommand\scs{\mathcal S}
\setlist[enumerate,1]{label=(\roman*)}
\definecolor{gris25}{gray}{0.60}\definecolor{Gray85}{gray}{0.85}\definecolor{Gray65}{gray}{0.65}
\newcommand{\on}{\operatorname}
\newcommand{\dR}{{\textrm{\scriptsize dR}}}
\newcommand{\sing}{\textrm{\scriptsize sing}}
\newcommand{\Gr}{{\mathrm{Gr}}}
\newcommand{\Fl}{{\mathrm{Fl}}}
\newcommand{\PG}{\mathrm{PG}}
\newcommand{\OS}{\on{{OS}}}
\newcommand{\redOS}{{\on{\overline{OS}}}}
\newcommand{\cone}{\mathrm{cone}\,}
\newcommand{\eps}{\epsilon}
\newcommand\dotafter[1]{\if\relax\detokenize{#1}\relax{}\else {#1.}\fi}
\titleformat{\subsection}[runin]{\bfseries}{\thesubsection.~}{0em}{}[\dotafter{#1}]
\titlespacing{\subsection}{0pt}{*2}{1ex}
\titleformat{\subsubsection}[runin]{\itshape}{\thesubsubsection.~}{0em}{}[\dotafter{#1}] 
\numberwithin{equation}{subsection} 
\title{Vanishing theorems on wonderful varieties}
\author{Ruizhen Liu}
\address{Department of Mathematics, University of Toronto, Toronto, ON M5S 2E4, Canada}
\email{ruizhen.liu@mail.utoronto.ca}
\thanks{The author is partially supported by a  University of Toronto Excellence Award.}
\keywords{Vanishing theorem, matroid, hyperplane arrangement}
\subjclass[2020]{
    52C35,     
    14F17 (primary), and 
    05E14,    
    14F40,     
    05B35 (secondary).     
}
\begin{document}
\begin{abstract}
    We study vanishing theorems of tautological bundles in the sense of Berget--Eur--Spink--Tseng restricted to wonderful varieties. As an application, we prove a characteristic-independent analogue of Brieskorn's result on cohomology of arrangement complements, in addition to a comparison theorem between Orlik--Solomon algebra and the logarithmic de Rham cohomology of wonderful varieties. In a different direction, we extend a vanishing theorem of Borel--Weil--Bott type for tautological bundles. Finally, we reduce the weak version of White's basis conjecture to a problem about cohomology vanishing of tautological bundles.
\end{abstract}
\maketitle
\vspace{-3em}
\tableofcontents
\newpage 

\section{Introduction}


Let $\kk$ be a field.  Write $E =
\{0,\dots,n\}$ and let $L \subseteq \kk^E$ be a linear subspace. The purpose of this paper is to study the positivity of certain vector bundles attached to the linear subspace $L$.

\subsection{}
Our starting point is the classical work of Brieskorn~\cite{Brieskorn} from half a century ago. We consider the linear embedding $\P L \subseteq \P^n$ with a fixed system of homogeneous coordinates $ x_i, i\in E,$ on $\P^n$. Suppose that we are in the situation
\begin{equation}\label{eq:ll}
    \text{The subspace $L\subseteq \kk^E$ is not contained in any coordinate hyperplane of $\kk^E$}.\tag{LL}
\end{equation}
Then, denote by $H_i=\{ x_i=0\}$ the restriction of the $i$\/th
coordinate hyperplane to $\P L$; the $H_i$ give rise to a hyperplane arrangement on $\P L$, thanks to Condition~\eqref{eq:ll}. Additionally, we denote by \[\P L^\circ = \P L \setminus \cup_i H_i\] the arrangement complement. Given a projective hyperplane arrangement $\{H_i\}_i$, one can construct the reduced Orlik--Solomon algebra $\redOS^\bullet(L)$; this is a graded-commutative $\ZZ$-algebra whose generators are linear forms $f=\sum f_i \be_i$ on $\ZZ^E$ satisfying $\sum f_i = 0$.
Brieskorn's result is that:
\begin{quote}
    {When $\kk=\CC$, there is a graded-commutative algebra isomorphism \[
            \redOS^\bullet(L) \to H_\sing^\bullet (\P L); \quad f \mapsto  \text{the class of }\sum f_i d \log  x_i.
        \]
    Loosely speaking, any cohomology class in $\P L^\circ$ can be represented as global differential forms on $\P L^\circ$ with logarithmic poles at infinity.}
\end{quote}
The topological statement  begs the question: can the reduced Orlik--Solomon algebra for an arrangement over \textit{any} field $\kk$ afford a geometric interpretation? The first part of this paper is to provide an answer to this question, using algebraic geometry.

First, we wish to construe, algebraically, the Betti cohomology $H^\bullet_{\sing}(\P L^\circ)$. Given a linear subspace $L \subseteq \kk^E$ satisfying Condition~\eqref{eq:ll}, one can construct its \textit{wonderful variety} $W_L$ (with maximal building set) in the sense of de
Concini--Procesi~\cite{dcp}. The variety $W_L$ is a smooth projective compactification of the arrangement complement $\P L^\circ$, with a simple normal crossings divisor $D_L = W_L \setminus \P L^\circ$ as boundary. Considering the complex $\Omega^\bullet _{W_L}(\log D_L)$ of differential forms on $W_L$ with logarithmic singularities along $D_L$, we have the Hodge--de Rham spectral sequence,
\begin{equation}\label{eq:HodgeDeligne}
    E_1^{p,q} = H^q (W_L,\Omega_{W_L}^p(\log D_L)) \Rightarrow \HH^{p+q}(W_L,\Omega^\bullet_{W_L}(\log D_L)). \tag{Hdg}
\end{equation}
When $\kk = \CC$, the right-hand side equals the Betti cohomology $H^{p+q}_{\sing} (\P L^\circ)_\CC$, and the spectral sequence always degenerates at $E_1$, thanks to Deligne~\cite[Proposition~3.1.8]{deligneHodgeII}. Moreover, Brieskorn's result asserts that the canonical inclusion $E^{p,0}_1 \hookrightarrow H^p_{\sing} (\P L^\circ)_\CC$ is an isomorphism. Hence the logarithmic de Rham cohomology $\HH^\bullet(\Omega^\bullet_{W_L}(\log D_L))$ serves as a potential algebro-geometric surrogate for the Betti cohomology of $\P L^\circ$.

However, in positive characteristic, even the degeneration of the Hodge--de Rham spectral sequence is not guaranteed for general smooth varieties; see~\cite{Raynaud} for counterexamples and Remark~\ref{rm:DeligneIllusie} for comments. Nonetheless, our first result removes the characteristic-$0$ assumption on the $E_1$-degeneration of Sequence~\eqref{eq:HodgeDeligne}.

\begin{maintheorem}\label{thm:log-vanishing}
    Let $\kk$ be a field and $L \subseteq \kk^E$ be a linear subspace satisfying Condition~\eqref{eq:ll}. Let $W_L$ be the de
    Concini--Procesi wonderful variety introduced above, and $D_L$ be its boundary divisor. Then,
    \begin{enumerate}
        \item  The sheaves of differential forms with logarithmic poles along $D_L$ have higher vanishing cohomology; that is, at $E_1$ page of Sequence~\eqref{eq:HodgeDeligne}, we have \[
            E_{1}^{p,q} = H^q(\Omega_{W_L}^p(\log D_L)) = 0,\quad\text{for all $q > 0$.}\]
        \item There are canonical isomorphisms of graded-commutative algebra \[
                \redOS^\bullet(L) \otimes \kk \to H^0(\Omega_{W_L}^\bullet (\log D_L)) \to \HH^\bullet(\Omega_{W_L}^\bullet(\log D_L)),
            \] where the first map will be defined in \S\ref{sec:orlik-solomon} and the second map is the edge morphism of Sequence~\eqref{eq:HodgeDeligne}.
    \end{enumerate}
\end{maintheorem}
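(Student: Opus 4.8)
The plan is to deduce part~(i) from the higher‑cohomology vanishing for tautological bundles obtained in the earlier sections, and then to bootstrap part~(ii) from it via a characteristic‑free dimension count together with the fact that global logarithmic forms on $W_L$ are closed. For part~(i), the geometric input is the identification --- established above --- of $\Omega^1_{W_L}(\log D_L)$ with a reduced tautological bundle $\scn_L$ of $L$, of rank $\dim L-1=\dim W_L$, which fits in a short exact sequence $0\to\scn_L\to\scm_L\to\sco_{W_L}\to 0$ with $\scm_L$ a rank‑$(\dim L)$ tautological bundle; this is the logarithmic analogue, on the iterated blow‑up $\beta\colon W_L\to\P L$, of the dual Euler sequence on $\P L$, and were it not already in hand I would establish it by a local computation on the blow‑up charts together with the transformation rule for tautological bundles under blow‑ups. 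Taking exterior powers yields $\Omega^p_{W_L}(\log D_L)\cong\bigwedge^p\scn_L$, so part~(i) is exactly the higher vanishing for these (reduced) tautological bundles proved earlier.

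For the first map in part~(ii): since $\sum_i f_i=0$, the rational form $\sum_i f_i\,d\log x_i$ on $\P L$ has at worst logarithmic poles along $\bigcup_i H_i$, and because $\beta^{*}H_i$ is supported on those boundary divisors $D_F$ with $i\in F$, each appearing with multiplicity one, a local computation on the blow‑up charts shows that $\beta^{*}\bigl(\sum_i f_i\,d\log x_i\bigr)$ extends to a global section of $\Omega^1_{W_L}(\log D_L)$. Extended $\ZZ$‑linearly and multiplicatively, this assignment annihilates the Orlik--Solomon relations: a circuit $C$ of $M(L)$ is a minimal dependence $\sum_{i\in C}a_i(x_i|_L)=0$ with every $a_i\neq 0$, so the functions $x_i/x_{i_0}$ ($i\in C\setminus\{i_0\}$) take $\P L^\circ$ into an affine subspace of dimension $|C|-2$, whence $\bigwedge_{i\in C\setminus\{i_0\}}d\log(x_i/x_{i_0})=0$; expanding this wedge is precisely the circuit relation, and the vanishing of a top‑degree form pulled back from a space of smaller dimension is purely algebraic, hence valid over any $\kk$. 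One thus obtains a homomorphism of graded‑commutative algebras $\redOS^\bullet(L)\otimes\kk\to H^0(\Omega^\bullet_{W_L}(\log D_L))$ sending $f$ to the class of $\sum_i f_i\,d\log x_i$.

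To prove this map is an isomorphism and to deduce the second map: injectivity in each degree follows from compatibility with the iterated Poincar\'e residues along the boundary strata of $D_L$ --- which are products of wonderful varieties of minors of $M(L)$, in particular rational, so that a global logarithmic form all of whose iterated residues vanish is itself zero --- under which $\redOS^\bullet(L)$ already maps injectively; this last is an integral (hence characteristic‑free) property of Orlik--Solomon algebras, read off from their no‑broken‑circuit description. Surjectivity then follows by matching dimensions: by part~(i), $\dim_\kk H^0(\Omega^p_{W_L}(\log D_L))=\chi\bigl(W_L,\bigwedge^p\scn_L\bigr)$, and this Euler characteristic --- computed by Hirzebruch--Riemann--Roch from the combinatorial presentation of the Chow ring of $W_L$ and the known Chern classes of the tautological bundles --- is a combinatorial invariant of $M(L)$ equal, by a known matroid identity, to the unsigned Whitney number $|w_p(M(L))|=\dim_\kk\redOS^p(L)\otimes\kk$ (recovering, for complex‑realizable matroids, the Betti numbers of $\P L^\circ$ via Brieskorn's theorem). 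Granting the first isomorphism, part~(i) forces the Hodge--de Rham spectral sequence~\eqref{eq:HodgeDeligne} to be concentrated in the row $q=0$; it therefore degenerates, $\HH^m(\Omega^\bullet_{W_L}(\log D_L))$ is the $m$‑th cohomology of the complex $\bigl(H^0(\Omega^\bullet_{W_L}(\log D_L)),d\bigr)$, and the edge morphism is the canonical map onto it. But by the first isomorphism this complex is generated as an algebra by the exact --- hence closed --- forms $d\log(x_i/x_j)$, so its differential vanishes and the edge morphism is an isomorphism.

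The main obstacle is part~(i): the identification of the logarithmic cotangent bundle with a reduced tautological bundle, and the higher vanishing for such bundles, both of which rest on the machinery developed in the earlier sections. A secondary delicate point is the combinatorial evaluation of $\chi(W_L,\bigwedge^p\scn_L)$ --- in particular its equality with $|w_p(M(L))|$ --- for matroids $M(L)$ that admit no realization in characteristic zero, where Brieskorn's theorem cannot be invoked directly.
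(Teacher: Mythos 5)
Your plan for part~(i) has a gap at exactly the step that does the real work. You identify $\Omega^1_{W_L}(\log D_L)$ with the subbundle $\scn_L = \ker\bigl(\scs_L^\vee|_{W_L}\to\sco_{W_L}\bigr)$ and then assert that ``part~(i) is exactly the higher vanishing for these (reduced) tautological bundles proved earlier.'' But Theorem~\ref{thm:SQvanishing} lives on $X_E$ and concerns $\bigwedge^a\scs_L\otimes\bigwedge^b\scq_L$; it is \emph{not} a statement about $\bigwedge^p\scn_L$, nor even directly about $\bigwedge^p\scs_L^\vee|_{W_L}$. Two nontrivial steps are missing. First, to descend from $X_E$ to $W_L$ one tensors the Koszul resolution of $\sco_{W_L}$ (Corollary~\ref{prop:Koszul-W}) by $\bigwedge^q\scs_L^\vee$ and applies Theorem~\ref{thm:SQvanishing} termwise via Fact~\ref{fact:lazarsfeld} to get $H^{>0}(W_L,\bigwedge^q\scs_L^\vee)=0$. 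Second, vanishing for exterior powers of the ambient bundle $\scs_L^\vee|_{W_L}$ does not automatically give vanishing for exterior powers of the subbundle $\scn_L$: the sequence $0\to\bigwedge^p\scn_L\to\bigwedge^p\scs_L^\vee|_{W_L}\to\bigwedge^{p-1}\scn_L\to 0$ yields only isomorphisms $H^q(\bigwedge^{p-1}\scn_L)\simeq H^{q+1}(\bigwedge^p\scn_L)$, which one must chain until the cohomological degree exceeds $\dim W_L$ and then invoke Grothendieck vanishing. That chain is the content of the proof; it cannot be elided by saying the vanishing was ``proved earlier.''

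For part~(ii), your route is genuinely different from the paper's and, with care, defensible. You argue injectivity of $\redOS^\bullet(L)_\kk\to H^0(\Omega^\bullet_{W_L}(\log D_L))$ via iterated Poincar\'e residues; the paper instead proves the larger map $\nu\colon\OS^\bullet(L)_\kk\to H^0(W_L,\bigwedge^\bullet\scs_L^\vee)$ is surjective (via the Koszul resolution and Theorem~\ref{thm:SQvanishing}) and then injective by matching dimensions against the characteristic polynomial (Corollary~\ref{cor:char-poly}), and only afterwards restricts to $\redOS^\bullet$. The residue route avoids the dimension count but requires checking that the iterated residues are defined and behave integrally in arbitrary characteristic; the dimension count avoids residues entirely. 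Two smaller points: the dimension that $h^0(\Omega^p_{W_L}(\log D_L))$ matches is a coefficient of the \emph{reduced} characteristic polynomial (equivalently $\dim\redOS^p$), not $|w_p(M(L))|$ of the unreduced one --- one passes between them using the Euler sequence~\eqref{eq:Euler-seq}; and your worry about matroids without a characteristic-$0$ realization is moot, since $M(L)$ is realized by $L$ over $\kk$ and the identity $h^0=\dim\redOS^p$ comes from Corollary~\ref{cor:char-poly} with no appeal to Brieskorn. Your explicit observation that the edge map is an isomorphism because $H^0(\Omega^\bullet)$ is generated by closed forms $d\log(x_i/x_j)$, forcing the $d_1$-differential on the single surviving row of~\eqref{eq:HodgeDeligne} to vanish, is a point the paper states but does not spell out --- it is worth keeping.
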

An important slogan from the theorem, therefore, would be that \textit{the logarithmic de Rham cohomology of the wonderful variety of a hyperplane arrangement depends only on its combinatorial type.} The result here is inspired by a number of previous works:
\begin{enumerate}[label={\arabic*.}]
    \item Over the complex numbers, this reformulation of Brieskorn's theorem also appeared in the work of Esnault--Schechtman--Viehweg~\cite[558]{ESV92}. However, our proof is purely algebraic, and bypasses the deletion-contraction sequence for Betti cohomology of $\P L^\circ$ used by Orlik--Solomon~\cite[\S5.4]{OrlikTerao}.
    \item For any field $\kk$, setting $p=\dim L-1$ in Statement~(i) recovers \cite[Corollary~1.7]{bestCohomology}.
    \item In characteristic $\ell>0$, one can consider the arrangement $\sca$ of all $\FF_\ell$-rational hyperplanes in $\P^r_{\overline{\FF}_\ell}$. The matroid for $\sca$ is the projective geometry $\PG(r,\ell)$. Langer observed that the wonderful variety $W_\sca$ of $\sca$ agrees with a compactified Deligne--Lusztig variety~\cite[Proposition~7.1]{LangerDeligneLusztig}. In this case, Statement~(i) recovers a result of Gro{\ss}e-Kl{\"o}nne~\cite[Theorem~2.3]{GrosseKloenne05}. In \textit{Ibid.}\@\xspace, Gro{\ss}e-Kl{\"o}nne also constructed a basis for the logarithmic de Rham cohomology $\HH^\bullet(\Omega_{W_\sca}^\bullet(\log D_\sca))$ in terms of certain unipotent translates of global logarithmic forms, which in fact agrees with the known reduced \textit{nbc}-monomial basis for the reduced Orlik--Solomon algebra $\redOS^\bullet(\sca)$---this is an illustration of Statement~(ii).
\end{enumerate}
In characteristic $0$, Statements (i) and (ii) of Theorem~\ref{thm:log-vanishing} hold for any simple normal crossings compactification of $\P L^\circ$. Unfortunately, without strong resolution of singularities, we can only generalise Theorem~\ref{thm:log-vanishing} to any compactification that is related $W_L$ via blow-ups and blow-downs along smooth centres; see Corollary~\ref{cor:factorisation} for a precise statement. However, compactifications arising this way already include de Concini--Procesi wonderful varieties attached to arbitrary building sets. The latter includes, for example, the Deligne--Mumford--Knudsen moduli space $\Mdmzero$ of stable rational curves with $m$ marked points. We will deduce an integral version of Theorem~\ref{thm:log-vanishing} for $\Mdmzero$; see Corollary~\ref{cor:MdmzeroSpecZ}.

\subsection{}
The first statement of Theorem~\ref{thm:log-vanishing} is a special case of a more general result involving certain torus-equivariant vector
bundles on the permutohedral toric variety $X_E$. One can associate to a
linear space $L\subseteq \kk^E$ its \emph{tautological sub- and quotient bundles}, denoted respectively by $\scs_L$ and
$\scq_L$, introduced by Berget, Eur, Spink, and Tseng~\cite{best}. These bundles fit into a short exact sequence \[
    0\to \scs_L \to \kk^E \otimes \sco_{X_E} \to \scq_L \to 0.
\]

Consider the direct sum  $\sce_{L} \coloneqq \scs_{L} \oplus \scq_{L}
$. We prove:
\begin{maintheorem}\label{thm:SQvanishing}
    Let $\kk$ be a field and $L\subseteq \kk^E$ a linear subspace. Then all exterior powers of $\sce_{L}$ have vanishing higher cohomology: \[
        H^q(X_E, \bigwedge^d \sce_{L}) = 0,\quad\text{for all $q>0$.}
    \] Equivalently, we have that $H^{i}(X_E, \bigwedge^p \scs_{L} \otimes \bigwedge^q \scq_{L})=0$ for all $i>0$ and $p,q\ge 0$.
\end{maintheorem}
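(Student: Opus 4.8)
Since $\sce_L=\scs_L\oplus\scq_L$ gives $\bigwedge^{d}\sce_L=\bigoplus_{p+q=d}\bigwedge^{p}\scs_L\otimes\bigwedge^{q}\scq_L$, the two displayed forms of the statement are equivalent, and I will prove the $\sce_L$-version; the whole argument will be insensitive to $\operatorname{char}\kk$. Two structural facts organise the proof. First, $\sce_L$ has constant rank $|E|$ and $\det\sce_L\cong\sco_{X_E}$, from the tautological sequence $0\to\scs_L\to\kk^{E}\otimes\sco_{X_E}\to\scq_L\to0$; hence $\bigwedge^{d}\sce_L\cong\bigwedge^{|E|-d}\sce_L^{\vee}$. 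Second, $\scs_L^{\vee}\cong\scq_{L^{\perp}}$ and $\scq_L^{\vee}\cong\scs_{L^{\perp}}$, where $L^{\perp}\subseteq\kk^{E}$ is the orthogonal complement (which realises the dual matroid $M^{\ast}$), so $\sce_L^{\vee}\cong\sce_{L^{\perp}}$ and the claim for $(L,d)$ is equivalent to the claim for $(L^{\perp},|E|-d)$. Since $(M\setminus e)^{\ast}=M^{\ast}/e$, this last symmetry lets me interchange the roles of deletion and contraction of an element $e$ freely in the argument below.

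I argue by induction on $|E|$. The case $|E|=1$ is trivial ($X_E$ is a point), as is any case with $L\in\{0,\kk^{E}\}$, where $\sce_L=\kk^{E}\otimes\sco_{X_E}$ is trivial and $H^{q}(X_E,\bigwedge^{d}\sce_L)=0$ for $q>0$ because $X_E$ is a complete toric variety. For the inductive step fix $e\in E$, put $E'=E\setminus e$, and let $\pi\colon X_E\to X_{E'}$ be the toric morphism induced by the coordinate projection $\kk^{E}\twoheadrightarrow\kk^{E'}$; its fibres are connected and rational, so $\R\pi_{\ast}\sco_{X_E}=\sco_{X_{E'}}$. I then invoke the deletion--contraction exact sequences for the tautological bundles (see \cite{best,bestCohomology}), which express $\scs_L$ and $\scq_L$ through the pullbacks $\pi^{\ast}\scs_{L\setminus e}$, $\pi^{\ast}\scq_{L\setminus e}$ twisted by the tautological line bundle $\scl_{e}$ of $e$ --- with the degenerate cases "$e$ a loop" and "$e$ a coloop" of $M$ treated separately, swapping deletion and contraction when convenient via the symmetry above. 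Together these present $\sce_L$ in a short exact sequence (arranged, if necessary through a secondary induction on $d$, so that $\bigwedge^{d}\sce_L$ acquires a finite filtration) whose graded pieces have the form $\pi^{\ast}\!\big(\bigwedge^{a}\scs_{L\setminus e}\otimes\bigwedge^{b}\scq_{L\setminus e}\big)\otimes\scm$, with $\scm$ ranging over a finite list of line bundles on $X_E$ built from $\scl_{e}$, $\scl_{e}^{-1}$, and pullbacks from $X_{E'}$.

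By the projection formula, $\R\pi_{\ast}$ of such a graded piece equals $\big(\bigwedge^{a}\scs_{L\setminus e}\otimes\bigwedge^{b}\scq_{L\setminus e}\big)\otimes\R\pi_{\ast}\scm$, so the proof hinges on the following point, which I expect to be the main obstacle: every line bundle $\scm$ that actually occurs is $\pi$-acyclic, with $\pi_{\ast}\scm$ again a line bundle on $X_{E'}$ whose shape the induction can absorb. This reduces to a fibrewise degree estimate ($\deg\scm\ge-1$ on every component of every fibre of $\pi$), automatic for the globally generated twists but requiring careful tracking of the exact sequences for the negative twists, where one may have to rewrite an over-negative $\scm$ on the dual side $L^{\perp}$ via Serre duality and the isomorphism $\sce_L^{\vee}\cong\sce_{L^{\perp}}$. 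Granting it, $\R\pi_{\ast}$ of $\bigwedge^{d}$ of each term of the resolution is, up to filtration, a direct sum of bundles $\bigwedge^{a}\scs_{L\setminus e}\otimes\bigwedge^{b}\scq_{L\setminus e}$ on $X_{E'}$ sitting in cohomological degree $0$, each acyclic in positive degrees by the inductive hypothesis since $|E'|=|E|-1$. The Leray spectral sequence for $\pi$ then forces $H^{q}=0$ for $q>0$ of $\bigwedge^{d}$ of every term of the resolution, and a dévissage through the long exact sequences of the filtration of $\bigwedge^{d}\sce_L$ yields $H^{q}(X_E,\bigwedge^{d}\sce_L)=0$ for all $q>0$, completing the induction.
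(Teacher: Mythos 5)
Your high-level strategy --- induction on $|E|$ over the deletion map $f\colon X_E\to X_{E\setminus n}$, reduction to chains of $\P^1$'s in the fibres, projection formula, Leray --- is indeed the skeleton of the paper's proof. The fibrewise degree estimates and cohomology-and-base-change arguments you anticipate are carried out in the paper (its Lemmas on tautological bundles restricted to $\P^1$ and on higher direct image vanishing), and none of that is where the difficulty lies. The gap is in the step you describe as ``up to filtration, a direct sum of bundles $\bigwedge^a\scs_{L\setminus e}\otimes\bigwedge^b\scq_{L\setminus e}$ on $X_{E'}$ \dots acyclic by the inductive hypothesis.''

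There are two problems with that step. First, the filtrations that the deletion map actually produces are asymmetric: $\scs_L$ is filtered by the \emph{contraction} (via $f^\ast\scs_{L/n}\hookrightarrow\scs_L$), while $\scq_L$ is filtered by the \emph{deletion} (via $\scq_L\twoheadrightarrow f^\ast\scq_{L\setminus n}$), so the graded pieces of $\bigwedge^d\sce_L$ push forward to \emph{mixed} terms of the form $\bigwedge^a\scs_{L/n}\otimes\bigwedge^b\scq_{L\setminus n}$ on $X_{E\setminus n}$. Your inductive hypothesis only covers $\bigwedge^p\scs_{L'}\otimes\bigwedge^q\scq_{L'}$ for a single $L'\subseteq\kk^{E\setminus n}$, and when $n$ is neither a loop nor a coloop we have $L/n\neq L\setminus n$, so the mixed term is simply not covered. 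The paper has to establish $H^{i>0}(X_{E\setminus n},\bigwedge^a\scs_{L/n}\otimes\bigwedge^b\scq_{L\setminus n})=0$ as a separate lemma: it runs a second dévissage entirely on $X_{E\setminus n}$, introducing a line bundle $\scl_2$ and an intermediate bundle $\sch=\scs_{L/n}\oplus\scq_{L\setminus n}$ sitting in a four-term sequence between $\sce_{L/n}$ and $\sce_{L\setminus n}$, and then propagates a chain of isomorphisms $H^1\simeq H^2\simeq\cdots$ that must terminate by Grothendieck vanishing. Your proposal has no surrogate for this lemma, and without it the Leray/dévissage argument does not close.

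Second, even granting $\pi$-acyclicity, knowing that $R^{>0}f_\ast(\cdots)=0$ does not identify what $f_\ast\bigwedge^d\sce_L$ \emph{is}; and the paper's Remark at the start of \S\ref{sec:SQproof} explains that the predecessor paper \cite{bestCohomology} foundered on exactly this point --- it asserted that $f_\ast\bigwedge^d\scq_L\cong\bigwedge^{d-1}\scq_{L\setminus n}\oplus\bigwedge^d\scq_{L/n}$ after only establishing a rank equality. Your phrase ``$\pi_\ast\scm$ again a line bundle on $X_{E'}$ whose shape the induction can absorb'' reproduces, rather than closes, that gap: the paper instead sidesteps the need to identify $f_\ast\bigwedge^d\scq_L$ by working with $f_\ast\bigwedge^d\scs_L$ and $f_\ast\bigwedge^d\scq_L^\vee$ (which it does identify, via explicit maps) and the auxiliary pushforward $f_\ast(\scl_1\otimes\bigwedge^{\bullet}\scg)$. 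One last caution: the duality swap $\sce_L^\vee\leftrightarrow\sce_{L^\perp}$ goes through the Cremona transform $\crem$, and the square formed by $\crem$ and $f$ on $X_E$ and $X_{E\setminus n}$ is not Cartesian, so duality does not let you freely interchange deletion and contraction at the level of pushforwards as your write-up suggests.
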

Taking $p=0$ or $q=0$ in the second assertion, we recover \cite[Theorem~1.2]{bestCohomology}, whose proof here seemed incomplete; see Remark~\ref{rm:eur-gap} for full discussions. On the other hand, combining a Koszul resolution of $\sco_{W_L}$ with the fact that the bundle $\scs_L^\vee|_{W_L}$ is an extension of the trivial bundle $\sco_{W_L}$ by the log cotangent bundle $\Omega^1_{W_L}(\log D_L)$ allows us to deduce
Theorem~\ref{thm:log-vanishing}.

Theorem~\ref{thm:SQvanishing} is obtained from a generalisation of arguments of
Eur in \cite{bestCohomology}. The strategy is to show that under a deletion map $f\colon X_E \to X_{E\setminus
n}$, the derived pushforward of $\bigwedge^d \sce_L$ along $f$ is concentrated in degree $0$ and
obeys a recursive pattern.

In another direction, we improve Statement~(i) of Theorem~\ref{thm:log-vanishing} in characteristic $0$. Our line of
argument here builds on a series of papers of Berget and Fink~\cite{BergetFinkMatrixOrbit,BergetFinkChow,BergetFinkExternalActivity}.
A key input is the \textit{geometric technique}, introduced by Kempf~\cite{Kempf} and popularised by Weyman~\cite{Weyman03}.
Our result here is

\begin{maintheorem}\label{thm:bwb-vanishing}
    Fix a linear subspace $L$ that satisfies Condition~\eqref{eq:ll}, in addition to $k$ linear subspaces $J_1,\dots,J_k \subseteq \CC^E$.
    For every $i=1,\dots,k$, let $\lambda_i$ be a partition. Then we have \[
        H^p(W_L, \schur^{\lambda_1} \scs_{J_1}^\vee \otimes \cdots \otimes \schur^{\lambda_k} \scs_{J_k}^\vee ) = 0,\quad\text{for all $p > 0$.}
    \]
\end{maintheorem}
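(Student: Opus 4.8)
The plan is to run the geometric technique of Kempf and Weyman, following Berget and Fink's study of matrix orbit closures. It is convenient to establish the vanishing for all tuples $(\lambda_1,\dots,\lambda_k)$ simultaneously. Fix $\CC$-vector spaces $V_1,\dots,V_k$ with $\dim V_i\ge\rk\scs_{J_i}$, put $\mathcal F\coloneqq\bigoplus_{i=1}^{k}\scs_{J_i}\otimes V_i^\vee$ on $W_L$, and let $p\colon\scz\coloneqq\tot(\mathcal F)\to W_L$ be its total space (an affine morphism). Then $p_*\sco_{\scz}=\sym^\bullet(\mathcal F^\vee)=\bigotimes_i\sym^\bullet(\scs_{J_i}^\vee\otimes V_i)$, and the Cauchy decomposition exhibits $\bigl(\bigotimes_i\schur^{\lambda_i}\scs_{J_i}^\vee\bigr)\otimes\bigl(\bigotimes_i\schur^{\lambda_i}V_i\bigr)$ as a direct summand of $p_*\sco_{\scz}$, the second factor being nonzero whenever every $\lambda_i$ has at most $\rk\scs_{J_i}$ rows (the only case in which $\schur^{\lambda_i}\scs_{J_i}^\vee\neq0$). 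Hence Theorem~\ref{thm:bwb-vanishing} will follow once we prove $\H^q(\scz,\sco_{\scz})=0$ for all $q>0$.

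For this I would set up the geometric technique as follows. Each $\scs_{J_i}$ is a subbundle of the trivial bundle $\CC^E\otimes\sco_{W_L}$ — restrict the defining sequence $0\to\scs_{J_i}\to\CC^E\otimes\sco_{X_E}\to\scq_{J_i}\to0$ of Berget--Eur--Spink--Tseng from $X_E$ to $W_L$ — so $\mathcal F$ embeds into the trivial bundle with fibre $\mathbb A\coloneqq\bigoplus_i\CC^E\otimes V_i^\vee$, and $\scz$ is a smooth closed subvariety of $W_L\times\mathbb A$. The second projection gives a proper morphism $q\colon\scz\to\mathbb A$ onto an \emph{affine} variety; by the Leray spectral sequence $\H^q(\scz,\sco_{\scz})=\H^0(\mathbb A,\R^qq_*\sco_{\scz})$, so it suffices to show $\R^iq_*\sco_{\scz}=0$ for $i>0$ — in fact I would aim to show $\R q_*\sco_{\scz}=\sco_{Y}$ with $Y\coloneqq q(\scz)$ (equivalently: $Y$ normal, $q_*\sco_{\scz}=\sco_Y$, and $\R^{>0}q_*\sco_{\scz}=0$). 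One could instead resolve $\sco_{\scz}$ by the Koszul complex of $\scz\subseteq W_L\times\mathbb A$ and push forward to $\mathbb A$, but that route requires the ``dual'' statement — vanishing of higher cohomology of Schur functors of the $\scq_{J_i}^\vee$ on $W_L$ — which seems no easier.

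It remains to analyse the collapsing $q$. The fibre over $(v_{i,a})\in\mathbb A$ is $\{w\in W_L: v_{i,a}\in(\scs_{J_i})_w\ \text{for all }i,a\}$; writing $\phi_i\colon W_L\to\Gr(\dim J_i,\CC^E)$ for the morphism classifying the subbundle $\scs_{J_i}$, a generic such fibre (for $\dim V_i$ large) is an intersection $\bigcap_i\phi_i^{-1}(\text{point})$. Thus $Y$ is, for $k=1$ and $J_1=L$, a several-vector incarnation of the matrix orbit closure of~\cite{BergetFinkMatrixOrbit,BergetFinkChow,BergetFinkExternalActivity}, and in general a several-subspace analogue attached to the data $(L;J_1,\dots,J_k)$. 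The heart of the argument is to (i) make this identification precise, (ii) show, extending Berget and Fink, that $Y$ is normal with rational singularities — over $\CC$ this would combine Hironaka resolution, Kempf's vanishing criterion, and the classical rational singularities of the determinantal varieties into which $Y$ maps through the inclusions $\scs_{J_i}\subseteq\CC^E\otimes\sco$ — and (iii) verify $q_*\sco_{\scz}=\sco_{Y}$ and $\R^{>0}q_*\sco_{\scz}=0$, which by the theorem on formal functions reduces to the connected fibres of $q$ (matroid-theoretic subvarieties of $W_L$ of the above type, expected to be rational, built from projective bundles over lower wonderful varieties) having vanishing higher cohomology in $\sco$. Granting (i)--(iii) we obtain $\R q_*\sco_{\scz}=\sco_{Y}$, and the vanishing of $\H^q(\scz,\sco_{\scz})$ for $q>0$ is then immediate since $Y$ is affine. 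The principal obstacle is exactly (ii)--(iii): identifying the correct several-subspace matrix orbit closure, proving it has rational singularities, and showing the collapse $\scz\to Y$ satisfies $\R q_*\sco_{\scz}=\sco_Y$; it is here, through resolution of singularities and Kempf's criterion, that the characteristic-zero hypothesis is used.
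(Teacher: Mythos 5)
Your overall framework---encode the Schur-functor tensor product as a direct summand of the symmetric algebra of a direct sum of $\scs_{J_i}^\vee$\/s via Cauchy, then kill cohomology by running the Kempf collapsing of the total space---is exactly the paper's strategy. But the two points you yourself flag as ``the principal obstacle,'' namely (ii) rational singularities of the image $Y$ and (iii) birationality of the collapse, are left as gaps, and the route you sketch for filling them does not work. The image $Y$ of the collapsing is \emph{not} a determinantal variety, and mapping to one does not transfer rational singularities; and Hironaka plus Kempf's criterion is circular here, since Kempf's criterion already presupposes you know $R^{>0}q_*\omega$ (or $\sco$) vanishes, which is what you are trying to prove.

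What the paper actually does to close these gaps is different in both places. For (ii), instead of trying to inherit rational singularities from a determinantal ambient, the paper passes to the \emph{multiprojective} picture: the total space of $\scs_L\oplus\bigoplus_i\scs_{J_i}^{\oplus d_i}$ over $X_E$ (not $W_L$) collapses onto a $T$-orbit closure $\hat Y_{L\sqcup\uJ}\subseteq\mat_{k+1,E}$, whose image $Y_{L\sqcup\uJ}$ in $(\P^k)^E$ is shown by a result of Li to be \emph{multiplicity-free} as a $T$-variety; Brion's multiplicity criterion then gives normality, Cohen--Macaulayness, and rational singularities outright, with no resolution of singularities. The affine variety $\hat Y_{L,\uJ}$ over $W_L$ is then identified with a basic affine chart of $Y_{L\sqcup\uJ}$ by dehomogenisation, so it inherits rational singularities. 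For (iii), rather than analysing generic fibres of $q$ (which can easily fail to be points for the tuple $\uJ$ you actually care about), the paper uses a small but decisive trick: append the one-dimensional subspace $\CC\cdot 1_T$ to the tuple, i.e.\ replace $\mathscr{E}_\uJ$ by $\mathscr{E}_\uJ\oplus\sco(-\beta)$. The extra $\A^E$ factor carries the vector $\ell$ itself, so the assignment $(\ell,e)\mapsto([\ell],(\ell,e))$ is a rational inverse, and birationality is automatic regardless of how the dimensions of the $J_i$ compare. You would need both of these ingredients---the multiplicity-free/Brion argument and the $\beta$-trick---to turn your outline into a proof.
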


In particular, taking $k=1$ and $J_1 = L$, one recovers Theorem~\ref{thm:log-vanishing} over the complex numbers.
If $L = \CC^E$, then $W_L=X_E$, then the assertions  recovers \cite[Theorem~5.1]{BergetFinkKtheory}; moreover, the same assertions still hold if one replaces $\scs_{L}^\vee$ by $\scq_{L}$, thanks to the standard Cremona transformation.

In general, when $W_L$ is not the permutohedral variety $X_E$, we do not expect the same vanishing to occur after swapping $\scs_{J_i}^\vee$ with $\scq_{J_i}$; for example, it can happen that $H^1(W_L,\scq_L) \neq 0$.\footnote{We thank Matt Larson for pointing this out. The idea here is to notice that $\scq_L|_{W_L}$ is the normal bundle of $W_L$ in $X_E$, and Mn{\"e}v's Universality says there exists a linear space $L\subseteq \CC^E$ that represents a singular point in the realisation space of its matroid; see the recent work of Eur, Fink, and Larson~\cite[Example~5.3]{eur2025vanishingtheoremsmatroids} for a further discussion.}
However, our next result shows that if one considers twisting by determinants of the bundles $\scq_{J_i}$, then in fact higher cohomology vanishes:

\begin{maintheorem}\label{thm:manivel-vanishing}
    Fix linear subspaces $L,J_1,\dots,J_k\subseteq \CC^E$ over the complex numbers as in Theorem~\ref{thm:bwb-vanishing}. Fix, additionally, a nonnegative number $d_i$ for every $i = 0,\dots,k$. If for every $i$, $\lambda_i$ is a partition with at most $d_i$
    parts, then,
    \begin{align*}
        H^p(  W_L, &\, \schur^{\lambda_0} \scq_L \otimes \schur^{\lambda_1} \scq_{J_1}\otimes \cdots\otimes\schur^{\lambda_k} \scq_{J_k}
            \otimes \\
        &  \det \scq_L^{\otimes d_0+1} \otimes \det\scq_{J_1}^{\otimes d_1}\otimes \cdots \otimes \det\scq_{J_k}^{\otimes d_k} ) = 0, \quad\text{for all $p> 0$.}
    \end{align*}
\end{maintheorem}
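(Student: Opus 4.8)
\emph{Strategy.} The plan is to deduce Theorem~\ref{thm:manivel-vanishing} from its special case $W_L=X_E$ by means of the same Koszul resolution already used to pass from Theorem~\ref{thm:SQvanishing} to Theorem~\ref{thm:log-vanishing}; the determinant twists are exactly what is needed to keep everything ``honest'' after unwinding that resolution. Recall that $W_L$ sits inside the permutohedral variety $X_E$ as the scheme-theoretic zero locus of a regular section of the tautological quotient bundle $\scq_L$, so that its normal bundle is $\scq_L|_{W_L}$. Writing $c=\rk\scq_L=\codim(W_L\subseteq X_E)$, the Koszul complex of this section,
\[
0\to \bigwedge^{c}\scq_L^\vee\to\cdots\to\bigwedge^{1}\scq_L^\vee\to\sco_{X_E}\to\sco_{W_L}\to 0,
\]
is a resolution of $\sco_{W_L}$ by vector bundles on $X_E$. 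Since any vector bundle $\scf$ on $X_E$ is flat, tensoring the whole complex by $\scf$ keeps it exact, so $\scf\otimes\bigwedge^\bullet\scq_L^\vee$ (in Koszul degrees $-c,\dots,0$) resolves $\scf|_{W_L}$, and the hypercohomology spectral sequence of this complex reads $E_1^{-j,p}=H^p(X_E,\scf\otimes\bigwedge^{j}\scq_L^\vee)\Rightarrow H^{p-j}(W_L,\scf|_{W_L})$. In particular $H^p(W_L,\scf|_{W_L})=0$ for all $p>0$ as soon as $H^p(X_E,\scf\otimes\bigwedge^{j}\scq_L^\vee)=0$ for all $p>0$ and $0\le j\le c$.

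It then remains to check this vanishing on $X_E$ for $\scf=\schur^{\lambda_0}\scq_L\otimes\bigotimes_{i=1}^k\schur^{\lambda_i}\scq_{J_i}\otimes\det\scq_L^{\otimes d_0+1}\otimes\bigotimes_{i=1}^k\det\scq_{J_i}^{\otimes d_i}$. Using $\bigwedge^{j}\scq_L^\vee\cong\bigwedge^{c-j}\scq_L\otimes(\det\scq_L)^{-1}$, the Pieri rule $\schur^{\lambda_0}\scq_L\otimes\bigwedge^{c-j}\scq_L\cong\bigoplus_{\mu}\schur^{\mu}\scq_L$ (the sum over $\mu$ with $\mu/\lambda_0$ a vertical strip of size $c-j$), and the identity $\schur^{\nu}\scg\otimes(\det\scg)^{\otimes m}\cong\schur^{\nu+(m^{\rk\scg})}\scg$, one rewrites $\scf\otimes\bigwedge^{j}\scq_L^\vee$ as a direct sum of bundles $\schur^{\mu+(d_0^{c})}\scq_L\otimes\bigotimes_{i=1}^k\schur^{\lambda_i+(d_i^{\rk\scq_{J_i}})}\scq_{J_i}$, that is, genuine tensor products of Schur functors of tautological quotient bundles on $X_E$. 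The crucial point is that the one factor of $(\det\scq_L)^{-1}$ produced by $\bigwedge^{j}\scq_L^\vee$ is absorbed by the twist $\det\scq_L^{\otimes d_0+1}$, leaving the non-negative power $\det\scq_L^{\otimes d_0}$, so that no negative determinant twists survive --- this is exactly why the exponent on $\det\scq_L$ is $d_0+1$ rather than $d_0$. By the $L=\CC^E$ case of Theorem~\ref{thm:bwb-vanishing}, with $\scq_{J_i}$ in place of $\scs_{J_i}^\vee$ (valid on $X_E$ through the Cremona symmetry recorded after that theorem), every such summand has vanishing higher cohomology on $X_E$. Hence $H^p(X_E,\scf\otimes\bigwedge^{j}\scq_L^\vee)=0$ for all $p>0$ and all $j$, and the theorem follows.

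\emph{Main obstacle.} Within this route the genuine inputs are just two: the existence of the Koszul resolution above --- equivalently, that $W_L\subseteq X_E$ is the zero locus of a regular section of $\scq_L$, the same geometric fact underlying the proof of Theorem~\ref{thm:log-vanishing} --- and the $L=\CC^E$ case of Theorem~\ref{thm:bwb-vanishing} itself, which is where the Berget--Fink machinery and the Kempf--Weyman geometric technique do the real work; granting these, the reduction is formal. I would flag that this argument does not use the hypothesis that each $\lambda_i$ ($i\ge 1$) has at most $d_i$ parts, nor the precise values of the $d_i$: the twists $\det\scq_{J_i}^{\otimes d_i}$ are merely carried along and reassembled into the partitions $\lambda_i+(d_i^{\rk\scq_{J_i}})$. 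One could instead run the geometric technique directly on an incidence variety over $W_L$ built from the $\scq_{J_i}$, in which case the part-count condition on $\lambda_i$ and the shift $d_0\mapsto d_0+1$ would reappear as dominance of the relevant weights on a product of flag bundles; there the delicate step would be the multi-parameter weight bookkeeping, whereas the Koszul reduction sidesteps it entirely.
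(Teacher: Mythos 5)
Your proof is correct, but it takes a genuinely different route from the paper's. The paper proves Theorem~\ref{thm:manivel-vanishing} by running the Kempf collapsing machinery \emph{directly on} $W_L$: it forms the tuple $\uJ$ with $d_0$ copies of $L$ and $d_i$ copies of $J_i$, takes the bundle $\mathscr{F}_\uJ^\vee \oplus \sco(-\alpha)\oplus\sco(-\beta)$ on $W_L$, uses Lemma~\ref{lem:beta-trick} to get a birational collapsing, invokes Lemma~\ref{lem:griffiths-vanishing} (the Grauert--Riemenschneider/Griffiths step, with the adjunction identity $\omega_{W_L}=(\omega_{X_E}\otimes\det\scq_L)|_{W_L}$ supplying the determinant twists), and finally extracts the statement from Cauchy's formula applied to $\sym\mathscr{F}_\uJ$. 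You instead push the whole problem from $W_L$ back to the permutohedral variety $X_E$ via the Koszul resolution of Corollary~\ref{prop:Koszul-W} together with Fact~\ref{fact:lazarsfeld}, use $\bigwedge^j\scq_L^\vee\cong\bigwedge^{c-j}\scq_L\otimes(\det\scq_L)^{-1}$ and Pieri plus the $\schur^\nu\scg\otimes(\det\scg)^{\otimes m}\cong\schur^{\nu+(m^{\rk\scg})}\scg$ identity to rewrite every Koszul term as a direct sum of products of Schur functors of the quotient bundles, and then quote the $L=\CC^E$ case of Theorem~\ref{thm:bwb-vanishing} in its Cremona-dual form (which is \cite[Theorem~5.1]{BergetFinkKtheory}). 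The two derivations of the ``$+1$'' on $\det\scq_L$ are geometrically the same adjunction formula seen from two sides, but the arguments are otherwise independent: your version reuses only a known result on $X_E$ plus formal bookkeeping and never invokes Lemma~\ref{lem:beta-trick}, Lemma~\ref{lem:griffiths-vanishing}, or the normality/rational-singularity analysis of the cone. As you correctly flag, a byproduct is a mild strengthening --- your route never uses the hypothesis that $\lambda_i$ has at most $d_i$ parts (for any $i$, including $i=0$), whereas the paper's Cauchy decomposition of $\sym\scq_{J_i}^{\oplus d_i}$ only produces the summands with $\lambda_i$ fitting inside $d_i$ rows. What the paper's approach buys in return is methodological uniformity with the proof of Theorem~\ref{thm:bwb-vanishing} and with the rest of \S\ref{sec:kempf}, plus the extra information in Lemma~\ref{lem:griffiths-vanishing} (the trace-map identification of $\omega_Y$) that your route discards.
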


Theorem~\ref{thm:bwb-vanishing} may be interpreted as a vanishing theorem of Borel--Weil--Bott type, and Theorem~\ref{thm:manivel-vanishing} as one of Manivel type; compare with~\cite{ManivelVanishing}. It should be noted that the bundle $\scq_L$ is seldom ample on $X_E$, so one cannot deduce the statements  by directly applying vanishing theorems for ample vector bundles by Manivel and others.

Here, the Schur functors are used to simplify notations. In fact, we will mostly deal with tensor products of symmetric algebras of $\scs_{J_i}$ and then apply Pieri's rules in the end.

\subsection{}

Finally, we turn to measures of complexity of certain toric ideals \emph{via} tautological bundles.
To a rank-$r$ matroid $\rmm$ with ground set $E$,
we can associate its toric ideal $I_\rmm$, defined to be the kernel of the ring map \[
    \kk[x_B,\; \text{$B$ a basis of $\rmm$}] \to \kk[y_0,\dots,y_n];\quad x_B \mapsto \prod_{i\in B} y_i.
\]
The ideal $I_\rmm$ cuts out the affine cone of the toric variety $X_{\rmb(\rmm)}$ associated to the base polytope $\rmb(\rmm)$ of the matroid $\rmm$.
The weakest version of White's basis conjecture is the following; see~\cite{WhiteBasisMonomial}:
\begin{conjecture}[White]
    The homogeneous ideal $I_\rmm$ is generated by quadrics.
\end{conjecture}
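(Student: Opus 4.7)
The plan is to translate the weak White conjecture into a cohomology vanishing problem on the permutohedral variety $X_E$ for bundles built from the tautological quotient $\scq_L$, where $L\subseteq\kk^E$ is a realisation of $\rmm$. The starting observation is that the line bundle $\Lambda\coloneqq\det\scq_L$ has a distinguished basis of global sections indexed by the bases of $\rmm$, and the morphism $X_E\to\P H^0(X_E,\Lambda)^\vee$ determined by $|\Lambda|$ factors through the toric variety $X_{\rmb(\rmm)}$ cut out by $I_\rmm$.

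Normality of matroid base polytopes is a classical theorem, which yields $\bigoplus_{k\ge 0}H^0(X_E,\Lambda^{\otimes k})\cong\kk[x_B]/I_\rmm$ and lets one apply Green's syzygy machinery. Define the Lazarsfeld kernel $N$ by the exact sequence \[ 0\to N\to H^0(X_E,\Lambda)\otimes\sco_{X_E}\to\Lambda\to 0. \] A standard Koszul-cohomology computation then reduces quadric generation of $I_\rmm$ to the vanishing \[ H^1\bigl(X_E,\bigwedge^2 N\otimes\Lambda^{\otimes k}\bigr)=0 \quad\text{for every }k\ge 1. \] To connect this with Theorem~\ref{thm:SQvanishing}, the next step is to exploit the tautological short exact sequence $0\to\scs_L\to\kk^E\otimes\sco_{X_E}\to\scq_L\to 0$ in order to produce a resolution of $\bigwedge^2 N$ whose graded pieces are tensor products of $\bigwedge^p\scs_L$ and $\bigwedge^q\scq_L$; the higher-cohomology vanishing supplied by Theorem~\ref{thm:SQvanishing} then converges, via a spectral sequence, to the required vanishing of $H^1\bigl(X_E,\bigwedge^2 N\otimes\Lambda^{\otimes k}\bigr)$.

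The hard part is the construction of such a resolution: while $N$ is built from tautological bundles through the basis sections of $\Lambda$, it is not itself a tautological bundle of any matroid, and $\bigwedge^2 N$ mixes pairs of bases of $\rmm$ in an essentially polymatroidal way. A promising angle is to identify $\bigwedge^2 N\otimes\Lambda^{\otimes k}$ as a syzygy in an Eagon--Northcott-type complex attached to the basis sections of $\Lambda$, so that only exterior powers already covered by Theorem~\ref{thm:SQvanishing} appear. For non-realisable matroids, one would replace $\scq_L$ by the combinatorial tautological class of Berget--Eur--Spink--Tseng in the Chow ring of $X_E$, and the desired statement becomes purely matroidal; it is precisely this matroidal $H^1$-vanishing that the paper isolates as a sufficient condition for the weak White conjecture.
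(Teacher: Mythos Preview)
The statement you are addressing is White's \emph{conjecture}; the paper does not prove it. What the paper actually establishes (Proposition~\ref{thm:weak-white}) is exactly the reduction in your first two paragraphs: absence of degree-$(j+1)$ minimal generators of $I_\rmm$ is equivalent to the Koszul-type vanishing $H^1\bigl(X_{\rmb(\rmm)},\bigwedge^2\scm_{\rmb(\rmm)}\otimes\sco_{\rmb(\rmm)}(j-1)\bigr)=0$. Your formulation with the Lazarsfeld kernel $N$ on $X_E$ is the same thing pulled back along $\phi_L$, using Observation~\ref{obs:realisable-matroid}. So far you agree with the paper.

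The gap is in your third step, where you assert that Theorem~\ref{thm:SQvanishing} feeds into a spectral sequence that ``converges to the required vanishing''. It does not. The associated graded of the natural filtration on $\phi_L^\ast\bigwedge^2\scm_{\rmb(\rmm)}\otimes\Lambda^{\otimes k}$ consists of \emph{fourfold} tensor products such as $\bigwedge^a\scs_L^\vee\otimes\bigwedge^b\scq_L^\vee\otimes\bigwedge^c\scs_L^\vee\otimes\bigwedge^d\scq_L$, not the twofold products $\bigwedge^p\scs_L\otimes\bigwedge^q\scq_L$ handled by Theorem~\ref{thm:SQvanishing}. There is no Eagon--Northcott-type manoeuvre that collapses these back to the twofold case, and the paper explicitly records the obstruction at the end of \S\ref{sec:white}: citing \cite[Example~5.3]{eur2025vanishingtheoremsmatroids}, there exists a $3$-dimensional $L\subseteq\CC^{12}$ with $H^1(W_L,\scq_L)\neq 0$, which forces some exterior power of $\Psi_L=\scs_L^\vee\oplus\scq_L^\vee\oplus\scs_L^\vee\oplus\scq_L$ to have nonvanishing higher cohomology. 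Hence no uniform spectral-sequence argument built on exterior powers of such direct sums can succeed. The paper accordingly stops at formulating Conjecture~\ref{conj:white-deg3}, whose $H^1$-vanishing would rule out cubic generators only, and leaves it open. Your proposal should be read as motivation for that conjecture, not as a proof of White's.
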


In the case where the matroid $\rmm$ is given by a linear subspace $L\subseteq \kk^E$ of dimension $r$, the statement that toric ideal $I_{\rmm^\perp}$ of the dual matroid $\rmm^\perp$ has no cubic minimal generators inspires the following stronger prediction
\begin{conjecture}[Conjecture~\ref{conj:white-deg3}]
    Let $L$ be a linear subspace of dimension $r$ whose matroid $\rmm(L)$ is connected. Then the vector bundle
    \[
        \bigwedge^{r} \sce_L^\vee \otimes \bigwedge^{r} \sce_L^\vee \otimes \det \scq_L
    \] has vanishing $H^1$, where, as before, we set $\sce_L = \scs_L \oplus \scq_L$.
\end{conjecture}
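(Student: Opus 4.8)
Since this is a conjecture, what follows is a plan of attack and the point at which I expect it to stall, not a complete argument. I would begin by recording why this is the right statement, and on which variety the cohomology lives: the bundles $\scs_L,\scq_L$ live on $X_E$, and the pertinent line bundle is $\mathcal{L}:=\det\scq_L=\det\scs_L^\vee$, which is globally generated, hence nef, on $X_E$. Under the toric contraction $p\colon X_E\to X_{\rmb(\rmm(L))}$ refining the normal fan of the base polytope one has $p^*\sco(1)=\mathcal{L}$, $p_*\sco_{X_E}=\sco$ and $\R^{>0}p_*\sco_{X_E}=0$; since $H^{>0}(X_E,\mathcal{L}^{\otimes d})=0$ by Demazure vanishing and matroid base polytopes are normal, the section ring $\bigoplus_d H^0(X_E,\mathcal{L}^{\otimes d})$ is the projectively normal coordinate ring $\kk[x_B : B\text{ a basis}]/I_{\rmm(L)}$ (and $I_{\rmm(L)}\cong I_{\rmm(L)^\perp}$ since $\rmb(\rmm^\perp)=\mathbf{1}-\rmb(\rmm)$). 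Writing $0\to M_{\mathcal{L}}\to H^0(X_E,\mathcal{L})\otimes\sco\to\mathcal{L}\to 0$, a standard computation (wedge and twist this sequence, and use Demazure vanishing to kill the corrections) identifies the space of minimal cubic generators of $I_{\rmm(L)}$ with $H^1(X_E,\bigwedge^2 M_{\mathcal{L}}\otimes\mathcal{L})$; and up to trivial summands $M_{\mathcal{L}}$ carries a filtration whose graded pieces are exactly the summands $\bigwedge^a\scs_L^\vee\otimes\bigwedge^b\scq_L^\vee$ ($a+b=r$, $b\ge 1$) of $\bigwedge^r\sce_L^\vee$. Hence the conjectured vanishing forces, through the $H^1$ long exact sequences of this filtration, $H^1(X_E,\bigwedge^2 M_{\mathcal{L}}\otimes\mathcal{L})=0$, i.e.\ that $I_{\rmm(L)}$ (equivalently $I_{\rmm(L)^\perp}$) has no cubic minimal generators --- the conjecture being in fact somewhat stronger than needed, as only a direct summand of the tensor square enters.

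For the vanishing itself I would run the strategy behind Theorem~\ref{thm:bwb-vanishing}, resolving away the quotient factors. From $0\to\scq_L^\vee\to\kk^E\otimes\sco\to\scs_L^\vee\to 0$ one obtains, for each $b$, an exact ``co-Koszul'' complex expressing $\bigwedge^b\scq_L^\vee$ in terms of $\bigwedge^{b-k}(\kk^E\otimes\sco)\otimes\sym^k\scs_L^\vee$, $k=0,\dots,b$. Splicing these into $\bigwedge^r\sce_L^\vee=\bigoplus_{a+b=r}\bigwedge^a\scs_L^\vee\otimes\bigwedge^b\scq_L^\vee$, applying Pieri's rule, and absorbing $\det\scq_L=\bigwedge^r\scs_L^\vee$, one produces a finite complex $G^\bullet$ of $X_E$-equivariant bundles, in non-negative cohomological degrees, with each $G^p$ a direct sum of bundles of the form $\schur^{\nu_1}\scs_L^\vee\otimes\schur^{\nu_2}\scs_L^\vee$, and with $G^\bullet$ quasi-isomorphic to $\bigwedge^r\sce_L^\vee\otimes\bigwedge^r\sce_L^\vee\otimes\det\scq_L$ placed in degree $0$. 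Applying Theorem~\ref{thm:bwb-vanishing} with the ambient subspace of that theorem taken to be $\CC^E$ (so $W_{\CC^E}=X_E$) and with $k=2$, $J_1=J_2=L$, each $G^p$ satisfies $H^{>0}(X_E,G^p)=0$. A hypercohomology spectral sequence argument then identifies $H^1(X_E,\bigwedge^r\sce_L^\vee\otimes\bigwedge^r\sce_L^\vee\otimes\det\scq_L)$ with $\coker\!\big(H^0(X_E,G^0)\to\ker(H^0(X_E,G^1)\to H^0(X_E,G^2))\big)$, i.e.\ with the degree-$1$ cohomology of the complex of global sections $H^0(X_E,G^\bullet)$.

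The main obstacle is exactly this last step. By the description in the first paragraph, exactness of $H^0(X_E,G^\bullet)$ at $G^1$ is nothing other than the assertion that $I_{\rmm(L)}$ has no cubic minimal generators, so Borel--Weil--Bott vanishing on its own has carried us in a circle. A genuinely new ingredient is needed, and it must be one that registers the connectedness hypothesis --- which is absent from Theorems~\ref{thm:SQvanishing}--\ref{thm:manivel-vanishing} but is essential here (for a matroid with a coloop, for instance, $\bigwedge^r\sce_L^\vee$ splits off exterior powers of the wrong rank, for which no such vanishing can be expected). Two routes I would try to supply it: (i) prove the vanishing directly by a deletion--contraction recursion for $\bigwedge^r\sce_L^\vee\otimes\bigwedge^r\sce_L^\vee\otimes\det\scq_L$ along a deletion map $X_E\to X_{E\setminus n}$, in the spirit of Theorem~\ref{thm:SQvanishing} --- here the twist by $\det\scq_L$ should be precisely what makes the derived pushforward concentrated in degree $0$, but one must keep track of how $\det\scq_L$ changes by a boundary divisor under deletion and must choose the deleted element so that the recursion remains within connected matroids, which is delicate since deletion and contraction do not both preserve connectedness; or (ii) replace the crude complex $G^\bullet$ by a smaller, matroid-adapted resolution built from broken-circuit / \textit{nbc} data, engineered so that the quadrics visibly surject onto the cubics on global sections. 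Either way the argument still rests on a combinatorial input about connected matroids which, as far as I can see, does not follow formally from the vanishing theorems already established.
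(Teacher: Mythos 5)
This statement is Conjecture~\ref{conj:white-deg3}: the paper does \emph{not} prove it, so there is no proof of the paper's against which to check you. Your write-up is, correspondingly, not a proof but a status report, and it is largely an accurate one; I record here where it matches the paper's own commentary and where it is less precise.

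Your first paragraph reproduces the content of \S\ref{sec:white} preceding the conjecture: the identification of $\det\scq_L$ with the pullback of the Pl{\"u}cker line bundle (Observation~\ref{obs:realisable-matroid}), the criterion in Proposition~\ref{thm:weak-white} relating absence of cubic generators to $H^1$ of $\bigwedge^2$ of the syzygy bundle twisted by $\sco(1)$, the filtration of $\phi_L^\ast\scm_{\rmb(\rmm^\perp)}^{\otimes 2}\otimes\det\scq_L$ whose associated graded pieces are exactly the direct summands of $\bigwedge^r\sce_L^\vee\otimes\bigwedge^r\sce_L^\vee\otimes\det\scq_L$, and the observation that the conjecture is strictly stronger than the cubics statement since only $\bigwedge^2\scm$, not $\scm^{\otimes 2}$, is required. (The paper explicitly flags the $\on{char}\kk\neq 2$ caveat in passing from the tensor square to $\bigwedge^2$; you should carry this along.)

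Your identification of the circularity in the Borel--Weil--Bott route is correct: exactness at degree one of the complex of global sections of your $G^\bullet$ is literally the statement about cubic generators, so Theorem~\ref{thm:bwb-vanishing} alone cannot close the loop. The paper, however, makes a sharper and more concrete point about your route~(i), the deletion--contraction recursion. It observes that a Theorem~\ref{thm:SQvanishing}-style argument would have to run through all exterior powers of $\Psi_L = \scs_L^\vee\oplus\scq_L^\vee\oplus\scs_L^\vee\oplus\scq_L$, and then, citing the example of Eur--Fink--Larson of an $L\subseteq\CC^E$ with $\#E=12$ and $H^1(W_L,\scq_L)\neq 0$, deduces via the Koszul resolution of $\sco_{W_L}$ and Fact~\ref{fact:lazarsfeld} that some $\bigwedge^d\Psi_L$ with $d\ge 2$ already has nonvanishing higher cohomology on $X_E$. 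This does not falsify the conjecture, but it rules out the naive uniform version of your route~(i), and any refinement of it should be checked against this example. Finally, a discrepancy in the paper you did not note: the introduction states the conjecture with the connectedness hypothesis, but the body's Conjecture~\ref{conj:white-deg3} drops it; your heuristic that connectedness is essential (the coloop case) is plausible but is not verified anywhere in the paper, so it should not be treated as settled without an explicit counterexample.
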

If $\on{char} \kk \neq 2$ and the conclusion above is satisfied for the linear subspace $L$, then the $I_{\rmm^\perp}$ toric ideal has no minimal generator in degree $3$, where $\rmm$ is the matroid associated to the linear subspace $L$. Despite the similarity with the statement of Theorem~\ref{thm:SQvanishing}, the reader will see the proof strategy thereof does not readily generalise to give higher cohomology vanishing for $\bigwedge^{r} \sce_L^\vee \otimes \bigwedge^{r} \sce_L^\vee \otimes \det \scq_L$.

Concerning organisation: we start in \S\ref{sec:prelim} with generalities on vector bundles we
consider. Section~\ref{sec:SQproof} is devoted to the proof of Theorem~\ref{thm:SQvanishing}. In \S\ref{sec:orlik-solomon}, we study the Orlik--Solomon algebra associated to a linear subspace using the logarithmic Hodge--de Rham spectral sequence, proving Statements~(i) and (ii) of Theorem~\ref{thm:log-vanishing} in Theorem~\ref{thm:log-vanishing-restate} and Theorem~\ref{thm:ring-iso}, respectively. In
\S\ref{sec:kempf}, we adapt the construction of Kempf--Weyman to study collapsings of vector bundles,
proving in particular Theorem~\ref{thm:bwb-vanishing} and Theorem~\ref{thm:manivel-vanishing}. Finally, Section~\ref{sec:white} is devoted to the weak White's conjecture and its strengthening.

\subsection*{Acknowledgements}
The author thanks Hunter Spink for numerous suggestions and constant encouragement. Very many thanks to Andrew Berget and Alex Fink for many discussions: they were collaborators at many stages of this project, and conversations with them led to the material in Section~\ref{sec:kempf} and Section~\ref{sec:white}. Many thanks to Chris Eur and Matt Larson for valuable comments and suggesting the connections to Orlik--Solomon algebras, which led to Section~\ref{sec:orlik-solomon}. We are particularly indebted to Matt Larson for pointing out a gap in the argument towards Theorem~\ref{thm:SQvanishing}. This project started in Spring 2025 during the Thematic Program in Commutative Algebra and Applications at the Fields Institute. Many thanks to the institution and the organisers of the programme.

\section{Preliminaries}\label{sec:prelim}
We refer to \cite{CLS} for basics on toric varieties and \cite{Oxley} for matroid theory.
\subsection{The permutohedral variety}
Recall that $E = \{0,\dots,n\}$. For $S\subseteq E$, we denote by $\be_S$ the sum of basis vectors $\sum_{i\in S}\be_i$ in $\RR^E$. Let $N_E$ be the lattice $\ZZ^E/\be_E$, and denote by $\bar\be_S$ the image of $\be_S$ in $N_{E,\RR}$.

An \emph{ordered set partition} $\scf$ of $E$ is a sequence $(F_1\,\dots,F_\ell)$ of nonempty subsets that partition $E$.
Given an ordered partition $\scf$ of $E$, we also associate to it a flag of subsets $\{G_i(\scf)\}$ of $E$
defined by $G_i(\scf) = \bigcup_{j\le i} F_j$.

\begin{definition}
    The \emph{permutohedral fan} $\Sigma_E$ is a simplicial fan in $N_{E,\RR}$ whose $k$-dimensional cones are \[
        \sigma_\scf = \cone\{\bar\be_{G_1(\scf)},\cdots,\bar\be_{G_k(\scf)}\},
    \] for all ordered set partitions $\scf$ with $k$ parts. The \emph{permutohedral variety} $X_E$ is the toric variety associated to the fan $\Sigma_E$.
\end{definition}

We identify $\ZZ^E$ as the cocharacter lattice of the algebraic torus $T=\Gm^E$ so that $N$
is the cocharacter lattice of $\P T = T/\Gm$, the big open torus of $X_E$.

We list some properties of the toric variety $X_E$:
\begin{enumerate}
    \item All cones in $\Sigma_E$ are spanned by a subset of a basis for the lattice $N_E$, so the toric variety $X_E$ is smooth. For $t\in T$, denote by $\ol t$ its image in $\P T$.
    \item By definition, every set partition $\scf$ indexes a torus invariant stratum in $X_E$, given by $T$-orbit of a
        distinguished point, which we denote by $p_\scf$. Also denote by $Z_\scf$ the closure of the stratum. By the orbit-cone stratification, every point in $X_{E}$ is given by $t.p_{\scf}$ for some $t \in T$ and some ordered set partition $\scf$ of $E$.
    \item The variety $X_E$ affords a toric involution map $\crem\colon X_E \to X_E$, known as the \textit{standard Cremona transform} and determined by $\ol t \mapsto \ol t^{-1}$ in the big open torus $\P T$.
    \item
        Finally, the fan $\Sigma_E$ can be obtained as the first barycentric subdivision of the fan for $\P^n$. Thus one can realise $X_E$ as a sequence of blow-ups;
        \begin{equation}\label{eq:blowup-perm}
            \pi_E\colon X_E = X_{n-1}\to X_{n-2}\to \cdots \to X_1 \to X_0 = \P^n,
        \end{equation}
        where $X_{i+1} \to X_i$ is given by blowing up (the strict transform of) $i$-dimensional coordinate
        subspaces.
\end{enumerate}
The last point  lets us streamline the definition of wonderful compactifications of hyperplane arrangements (with maximal building set) in the sense of de Concini and Procesi~\cite{dcp}.
\begin{definition}
    For $L\subseteq \kk^E$ a linear space satisfying Condition~\eqref{eq:ll}, the \emph{wonderful variety} $W_L$ of $L$ is given by the strict transform $\widetilde{\P L}\subseteq X_E$.
\end{definition}
By construction, the wonderful variety $W_L$ is also an iterated blow-up of the linear subspace $\P L$, and the blow-down map $\pi_E$ restricts to an isomorphism on the intersection $\P L^\circ = \P L \cap \P T$.
The boundary $D_L = W_L \setminus \P L^\circ$ is a simple normal crossings divisor~\cite[\S3.1]{dcp}.

For an ordered set partition $\scf=\{F_1, \cdots, F_\ell\}$ of $E\setminus n$, let
$\scf(i)$ be the ordered partition of $E$ obtained by appending $n$ to $F_i$, and
$\scf^i$ be the ordered partition of $E$ obtained by inserting $\{n\}$ between $F_{i}$ and $F_{i+1}$.

Let $T'$ be the subtorus $\Gm^{E\setminus n} \subseteq \Gm^E$, and consider the projection $f^\circ\colon \Gm^E\to \Gm^{E\setminus n}$ away from the $n$th factor. This gives a rational map $\P^n \dashrightarrow \P^{n-1}$ given by $[x_0:\dots:x_n]\mapsto [x_0:\dots:x_{n-1}]$, whose base locus is clearly resolved by the blow-up $\pi_E$. In fact, the resolution factors through $X_{E\setminus n}$:
\begin{proposition.definition}\label{propdet:deletion}
    The map $f^\circ$ extends to a flat, projective toric morphism \[f\colon X_E \to X_{E \setminus n}.\]
    Moreover, given $t\in T'$ and $\scf$ an ordered set partition of $E\setminus n$ with $\ell$ parts, the fibre $C = f^{-1}(t. p_\scf)$ is a chain $\bigcup_{1\le i \le \ell} C(t,i)$ of translated rational curves $C(t,i) = (t_0,\dots,t_{n-1},1).\P^1_{\scf(i)}$, with the component $\P^1_{\scf(i)}$ given by \[
        \P^1_{\scf(i)} = p_{\scf^i} \sqcup p_{\scf^{i+1}} \sqcup
        \Gm.p_{\scf(i)} \simeq 0 \sqcup \infty \sqcup \Gm.
    \]
\end{proposition.definition}
We refer to~\cite[Corollary~2.4]{bestCohomology} for a proof. The reader will see that the flatness of $f$ plays an important role in the proof of Theorem~\ref{thm:SQvanishing}. In \cite{LosevManin}, the authors considered $X_E$ as a moduli space of weighted stable rational curve with $n+1$ points marked by $E$ with weights $(1,1,\eps,\dots,\eps)$ with infinitesimal $\eps>0$; in this case, the map $f\colon X_E \to X_{E\setminus n}$ corresponds to forgetting the weight $\eps$ point marked by $n$.

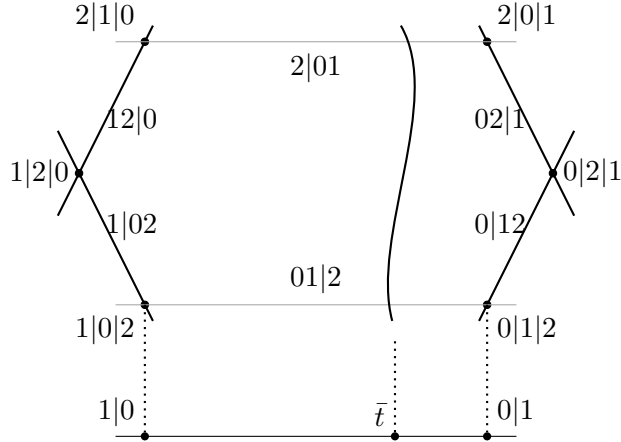
\begin{figure}
    \centering
    \begin{tikzpicture}[scale=0.7]
        \node [] (0) at (3.5, 1) {$02|1$};
        \node [] (1) at (3.5, -1) {$0|12$};
        \node [] (3) at (-3.5, -1) {$1|02$};
        \node [] (4) at (-3.5, 1) {$12|0$};
        \node [] (5) at (0, 2) {$2|01$};
        \node [] (6) at (-4, -5) {};
        \node [] (7) at (4, -5) {};
        \node [] (21) at (-4, 2.5) {};
        \filldraw[black] (-4.5, 0) circle (2pt) node[left]{$1|2|0$};
        \node [] (23) at (-4, -2.5) {};
        \node [] (24) at (4, -2.5) {};
        \filldraw[black] (4.5, 0) circle (2pt) node[right]{$0|2|1$};
        \node [] (26) at (4, 2.5) {};
        \node [] (27) at (-3, 3) {};
        \node [] (28) at (-5, -1) {};
        \node [] (29) at (-5, 1) {};
        \node [] (30) at (-5, -1) {};
        \node [] (31) at (-3, -3) {};
        \node [] (32) at (3, 3) {};
        \node [] (33) at (5, -1) {};
        \node [] (34) at (5, 1) {};
        \node [] (35) at (3, -3) {};
        \filldraw[black] (3.25, 2.5) circle (2pt) node[above right]{$2|0|1$};
        \filldraw[black] (-3.25, 2.5) circle (2pt) node[above left]{$2|1|0$};
        \filldraw[black] (3.25, -2.5) circle (2pt) node[below right]{$0|1|2$};
        \filldraw[black] (-3.25, -2.5) circle (2pt) node[below left]{$1|0|2$};
        \node [] (38) at (0, -2) {$01|2$};
        \node [] (39) at (1.5, 3) {};
        \node [] (40) at (1.5, -3) {};
        \node [circle,fill=black,scale=0.3] (41) at (1.5, -5) {};
        \node [above left] at (1.5, -5) { $\bar t$};
        \node [circle,fill=black,scale=0.3] (42) at (3.25, -5) {};
        \node [above right] at (3.25, -5) { ${0|1}$};
        \node [circle,fill=black,scale=0.3] (43) at (-3.25, -5) {};
        \node [above left] at (-3.25, -5) { ${1|0}$};

        \draw [in=180, out=0] (6) to (7);
        \draw [thick] (31) to (29);
        \draw [thick] (27) to (30);
        \draw [thick] (32) to (33);
        \draw [thick] (34) to (35);
        \draw [Gray65] (24) to (23);
        \draw [Gray65] (21) to (26);
        \draw [thick,in=105, out=-60, looseness=0.75] (39) to (40);
        \draw[thick, dotted]  (-3.25, -2.5) to (43);
        \draw[thick, dotted] (40) to (41);
        \draw[thick, dotted]  (3.25, -2.5) to (42);
    \end{tikzpicture}
    \caption{An illustration of the map $f:X_{0,1,2} \to X_{0,1}$; strata of the permutohedral varieties  labelled by set partitions.}
    \label{fig:A2toA1}
\end{figure}

\begin{example}
    Consider the deletion map $f\colon X_E \to X_{E\setminus n}$ in the case when $E = \{0,1,2\}$ and $n=2$. In Figure~\ref{fig:A2toA1}, we describe $f$ on the level of toric strata. The six straight edges forming a hexagon constitute the toric boundary of $X_{0,1,2}$. 
    The thickened edges in Figure~\ref{fig:A2toA1} are illustrations of various fibres of $f$. The curved edge is the fibre over a point $\bar t = \bar t p_{01}$ in the big open torus of $X_{0,1}$, which has two points given by intersecting with the two invariant curves indexed by $2|01$ and $01|2$, respectively. On the rightmost side of the picture, we have the ordered set partition $0|1$ of $\{0,1\}$ corresponds to a torus fixed point of $X_{0,1}$ and is indicated by the vertex $0|1$. The fibre over the point $0|1$ is the gluing of two torus invariant curves---$\P^1_{02|1}$ and $\P^1_{0|12}$---along the $T$-fixed point $p_{0|1|2}$. Similarly, on the leftmost side, the union of invariant curves $1|02$ and $12|0$ gives the fibre over the fixed point $1|0$ in $X_{0,1}$.
\end{example}

\subsection{Tautological bundles of linear spaces}\label{subsec:intro-to-best}
Recall that $\kk$ is a field of arbitrary characteristic.
Let $\kk^E_\inv$ be the dual of the standard representation of the split $\kk$-torus $T = \Gm^E$. This also induces a $T$-action on $\Gr(r,\kk^E)$ given by $t.[L] = [t^{-1}L]$ for $t\in T$.

The construction of tautological bundles rest upon the following classical result; see~\cite[Corollary~2.4]{GGMS}, \cite{WhiteBasisMonomial} and~\cite[Lemma~3.5]{best}:
\begin{proposition}\label{prop:ggms}
    Given a linear subspace $L \subseteq \kk^E$ of dimension $r$, there exists a unique $T$-equivariant map
    \begin{equation}\label{eq:pre-log-gauss}
        \phi_L\colon X_E \to \ol{T.[L]} \subseteq \Gr(r,\kk^E),
    \end{equation} mapping $\bar{t} \in \P T$ to $[t^{-1}L]$. Moreover, the image $\ol{T.[L]}$ of $\phi_L$ equals the toric variety $X_{\rmb(L)}$ associated to the base polytope $\rmb(L)$ of the matroid $\rmm(L)$.
\end{proposition}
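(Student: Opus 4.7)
The plan is to construct $\phi_L$ as a toric morphism $X_E \to X_{\rmb(L)}$ arising from a refinement of fans, and then to match this morphism with the orbit closure in the Grassmannian via the Plücker embedding.

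Uniqueness is immediate from separatedness of $\Gr(r,\kk^E)$ and density of $\P T \subseteq X_E$: any extension of $\bar t \mapsto [t^{-1}L]$ is determined by its values on $\P T$. For existence, I would compose with the Plücker embedding $\iota\colon \Gr(r,\kk^E) \hookrightarrow \P(\bigwedge^r \kk^E)$. The ambient projective space is a $T$-toric variety whose weight lines are indexed by $B \in \binom{E}{r}$, and by definition the $B$th Plücker coordinate of $\iota([L])$ is nonzero exactly when $B$ is a basis of $\rmm(L)$. A direct computation shows that under $t.[L] = [t^{-1}L]$ the $B$th coordinate of $\iota(\bar t.[L])$ is $t^{-\be_B}p_B(L)$, so the orbit $T.\iota([L])$ lies in the sub-projective-space cut out by vanishing of non-basis coordinates, and its moment polytope as a $\P T$-toric subvariety is $\rmb(L) = \on{conv}\{\be_B : B \text{ a basis of } \rmm(L)\} \subseteq N_{E,\RR}$ (up to the sign that propagates from the choice of inverse in the action).

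Next I would exploit that $\rmb(L)$ is a generalized permutohedron: the basis-exchange axiom forces every edge of $\rmb(L)$ to lie along a root direction $\be_i - \be_j$. It is standard in toric geometry that the permutohedral fan $\Sigma_E$ refines the normal fan of every generalized permutohedron on $E$. The refinement therefore supplies a $T$-equivariant toric morphism $\phi_L\colon X_E \to X_{\rmb(L)}$, whose restriction to $\P T$ agrees by construction with the orbit map $\bar t \mapsto [t^{-1}L]$. This already gives the existence statement together with a description of the image up to normalisation.

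The main obstacle, and the substantive content beyond this formal construction, is to identify $\ol{T.[L]} \subseteq \Gr(r,\kk^E)$ with the abstract polytope toric variety $X_{\rmb(L)}$ on the nose, rather than merely as the image of its normalisation. This is the classical GGMS statement that every linear-subspace torus-orbit closure in the Grassmannian is already a normal toric variety with moment polytope $\rmb(L)$; I would invoke it as a black box. A self-contained verification proceeds chart by chart: for each maximal cone $\sigma$ of the normal fan of $\rmb(L)$, one checks that the affine semigroup generated by the differences $\be_B - \be_{B'}$ (ranging over pairs of bases of $\rmm(L)$ whose indicator vectors minimise a linear functional in $\sigma$) is saturated, which is essentially a reformulation of the exchange axiom. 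Since this normality step is standard and orthogonal to the vanishing arguments that follow, I would defer to the literature and retain it as the only non-formal input.
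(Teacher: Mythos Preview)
Your outline is correct and matches the standard route through the references the paper cites (\cite{GGMS}, \cite{WhiteBasisMonomial}, \cite[Lemma~3.5]{best}); note that the paper itself does not supply a proof but simply defers to these sources. One small caution: your parenthetical that saturation of the relevant semigroups is ``essentially a reformulation of the exchange axiom'' undersells the work involved. Normality of $X_{\rmb(L)}$ is White's theorem that the matroid base polytope is a normal polytope, and its proof is not a direct consequence of basis exchange---it requires an iterated exchange argument. Since you already plan to invoke this as a black box from the literature, this does not damage the argument, but the sketch you offer for a self-contained verification would not go through as easily as you suggest.
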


\begin{definition}
    The \emph{tautological sub- and quotient bundle} of the linear subspace $L$ are respectively \[
        \scs_L \coloneqq \phi_L^\ast \scs_{\Gr(r,E)}, \text{ and } \scq_L \coloneqq \phi_L^\ast \scq_{\Gr(r,E)},
    \] where $\scs_{\Gr(r,E)}$ (resp. $\scq_{\Gr(r,E)}$) are the tautological sub- (resp. quotient) bundle of $\Gr(r,\kk^E)$.
\end{definition}
The equivariant Euler sequence for the Grassmannian $\Gr(r,\kk^E)$ pulls back a short exact sequence \[
    0\to \scs_L \to \kk^E_\inv \otimes \sco_{X_E} \to \scq_L \to 0.
\]
As a matter of notation, denote by $\alpha_E$ the pullback along $\pi_E$ of a general hyperplane in $\P^n$, and $\beta_E$ the Cremona pullback $\crem^\ast \alpha_E$. In the language of tautological bundles, we have $\sco_{X_E}(\alpha_E) = \scq_H$ and $\sco_{X_E}(\beta_E) = \scs_\ell^\vee$, where $\ell,H$ are respectively a general line and a general hyperplane in $\kk^E$.
\begin{remark}
    The divisors $\alpha_\rmm$ and $\beta_\rmm$ defined in \cite[Definition~5.7]{AHK}---which these authors used to deduce log-concavity of the reduced characteristic
    polynomial of a matroid---are the restrictions of $\alpha_E$ and $\beta_E$, respectively, to a subfan $\Sigma_\rmm$ of $\Sigma_E$ called the Bergman fan of the matroid $\rmm$.
\end{remark}

The next proposition relates the tautological bundles and the wonderful variety $W_L$ in the two ways; see \cite[Theorem~7.10 and Theorem~8.8]{best} for details.
\begin{proposition}\label{prop:degeneracy-locus}
    Assume $\kk$ is algebraically closed. Let $L\subseteq \kk^E$ be a linear subspace that satisfies Condition~\eqref{eq:ll}.
    \begin{enumerate}
        \item
            The wonderful variety $W_L$ is the vanishing locus of the section $s\in \Gamma (\scq_L)$ given by the image of $s'=(1,\dots,1)\in H^0(\sco_{X_E}^E)$. That is, we have a Cartesian square \[
                \begin{tikzcd}
                    {\tot \scs_L} & {\A^E_{X_E} } \\
                    {W_L} & {X_E}
                    \arrow[from=1-1, to=1-2]
                    \arrow[from=2-1, to=1-1]
                    \arrow["\lrcorner"{anchor=center, pos=0.125, rotate=90}, draw=none, from=2-1, to=1-2]
                    \arrow[from=2-1, to=2-2]
                    \arrow["s'"', from=2-2, to=1-2]
                \end{tikzcd}
            \] where the top horizontal map is given by inspecting the total space of the embedding $\scs_L \subseteq \sco^E_{X_E}$.
        \item Consider the sheaf $\Omega^1_{W_L}(\log D_L)$ of differential $1$-forms on $W_L$ with logarithmic poles along $D_L$. Contracting the section $(1,\dots,1) \in H^0(W_L,\scs_L)$ against $\scs_L^\vee|_{W_L}$ induces an Euler-like short exact sequence
            \begin{equation}\label{eq:Euler-seq}
                0\to \Omega^1_{W_L}(\log D_L) \to \scs_L^\vee|_{W_L} \to \sco_{W_L} \to 0.
            \end{equation}
    \end{enumerate}
\end{proposition}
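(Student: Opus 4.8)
The plan is to deduce both statements from the explicit formula for $\phi_L$ over the big open torus, together with standard exact sequences.

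\emph{Statement (i).} The asserted Cartesian square is just a reformulation of the scheme-theoretic identity $W_L = Z(s)$, where $Z(s)\subseteq X_E$ is the zero scheme of the section $s\in\Gamma(X_E,\scq_L)$: the fibre product $\tot\scs_L\times_{\A^E_{X_E}}X_E$ formed along $s'\colon X_E\to\A^E_{X_E}$ parametrises pairs $(x,v)$ with $v\in(\scs_L)_x$ and $v=(1,\dots,1)$, and its projection to $X_E$ is a closed immersion with (scheme-theoretic) image $\{x:(1,\dots,1)\in(\scs_L)_x\}=Z(s)$. So the task is to show $W_L=Z(s)$ as closed subschemes. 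On $\P T$, Proposition~\ref{prop:ggms} gives $(\scs_L)_{\bar t}=t^{-1}L$, so $(1,\dots,1)\in(\scs_L)_{\bar t}$ exactly when $t\in L$; hence $Z(s)\cap\P T=\P L^\circ$, a smooth variety of dimension $r-1$, and in particular $s$ is a regular section there. Since $\scq_L$ has rank $n+1-r$ while $\dim X_E=n$, every component of $Z(s)$ has dimension at least $r-1$, so the irreducible $(r-1)$-dimensional $W_L=\overline{\P L^\circ}$ is a component of $Z(s)$; it then remains to check that $s$ is regular on all of $X_E$ (so $Z(s)$ is Cohen--Macaulay, has no embedded components, hence---being generically reduced---is reduced) and that $Z(s)$ has no further component supported in the toric boundary. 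I would do this by an explicit computation in the affine toric charts of $X_E$, using the description of $\phi_L$ via initial degenerations of $L$ and the de Concini--Procesi presentation of $W_L$ as an iterated blow-up; alternatively this is \cite[Theorem~7.10]{best}. This last point is the only genuinely technical part.

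\emph{Statement (ii).} Granting (i), $W_L=Z(s)$ is the smooth zero locus of a regular section of $\scq_L$, so there is a canonical identification $N^\vee_{W_L/X_E}\cong\scq_L^\vee|_{W_L}$, $\lambda\mapsto\langle\lambda,s\rangle\bmod\mathcal I_{W_L}^2$; moreover, by the de Concini--Procesi construction $W_L$ meets the toric boundary $\partial X_E$ transversally, with $W_L\cap\partial X_E=D_L$ and $W_L\cup\partial X_E$ simple normal crossings. The log conormal sequence then reads
\[
    0 \to \scq_L^\vee|_{W_L} \to \Omega^1_{X_E}(\log\partial X_E)|_{W_L} \to \Omega^1_{W_L}(\log D_L) \to 0 ,
\]
and, $X_E$ being a smooth toric variety, $\Omega^1_{X_E}(\log\partial X_E)\cong M_E\otimes_{\ZZ}\sco_{X_E}$ with $M_E=\{\xi\in\ZZ^E:\sum_i\xi_i=0\}$ the character lattice of $\P T$. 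I would compare this with the dual Euler sequence $0\to\scq_L^\vee\to\sco_{X_E}^E\to\scs_L^\vee\to0$ and with $0\to M_E\otimes\sco_{X_E}\to\sco_{X_E}^E\to\sco_{X_E}\to0$ (the last arrow being the sum of coordinates): one checks that under these identifications the conormal inclusion $\scq_L^\vee|_{W_L}\to\Omega^1_{X_E}(\log\partial X_E)|_{W_L}$ coincides with the inclusion $\scq_L^\vee|_{W_L}\hookrightarrow\sco_{X_E}^E|_{W_L}$ of the dual Euler sequence---this is verified over the dense open $\P L^\circ$, where $\scq_L^\vee|_{\bar t}=t\cdot L^\perp$ and the conormal covector attached to $\eta\in L^\perp$ has coordinates $(t_i\eta_i)_i$ (note $\sum_i t_i\eta_i=\langle t,\eta\rangle$ vanishes, so this indeed lies in $M_E$), matching the corresponding section of $\scq_L^\vee|_{W_L}$. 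As maps of bundles on the reduced irreducible $W_L$ they then agree everywhere, so the three sequences assemble into a commutative $3\times3$ diagram whose third column is
\[
    0\to\Omega^1_{W_L}(\log D_L)\to\scs_L^\vee|_{W_L}\to\sco_{W_L}\to0 ,
\]
and the surjection $\scs_L^\vee|_{W_L}\to\sco_{W_L}$, being induced by the sum-of-coordinates map, sends a local functional $\textstyle\sum_i a_ix_i$ on $\scs_L$ to $\sum_i a_i$, i.e. it is contraction against $(1,\dots,1)\in H^0(W_L,\scs_L)$; surjectivity is automatic because $(1,\dots,1)$ is nowhere vanishing.

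The hard part is the boundary analysis in (i): checking that the tautological section $s$ is everywhere regular and that its zero scheme, with its reduced structure, is exactly the strict transform $W_L$. Once that is secured, part (ii) is a diagram chase, the only substantive inputs being that $W_L$ is a transverse, normal-crossings-compatible complete intersection cut out by $s$ and that the logarithmic cotangent bundle of $X_E$ is trivial.
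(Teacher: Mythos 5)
The paper does not actually prove Proposition~2.5; it simply refers to \cite[Theorems~7.10 and 8.8]{best} for both parts, so there is no in-paper argument to compare your sketch against. Your reasoning is sound. For (i), the reduction of the Cartesian square to the scheme-theoretic equality $W_L=Z(s)$, the open-torus computation $Z(s)\cap\P T=\P L^\circ$, and the dimension count are all correct; you then explicitly defer the remaining content---regularity of $s$ on all of $X_E$ (so $Z(s)$ is Cohen--Macaulay, hence reduced once generically reduced) and the absence of extra components in the toric boundary---to \cite[Theorem~7.10]{best}, which is honest, since that is where the real work lies. For (ii), granting (i), the argument via the log conormal sequence for the transverse pair, the trivialisation $\Omega^1_{X_E}(\log\partial X_E)\cong M_E\otimes\sco_{X_E}$, and the $3\times3$ diagram with the restricted dual Euler sequence is a complete and correct derivation of Sequence~\eqref{eq:Euler-seq}; the fibrewise identification of the conormal inclusion with the Euler inclusion over $\P L^\circ$ is the right check, and extending by density to all of $W_L$ is legitimate because two morphisms of vector bundles into $\sco^E_{W_L}$ on an integral scheme that agree on a dense open agree everywhere. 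The one input you quietly rely on and should state is that $W_L\cup\partial X_E$ is a simple normal crossings subvariety of $X_E$ with $W_L\cap\partial X_E=D_L$; this is part of the de~Concini--Procesi theory of wonderful models and is exactly what makes the log conormal sequence available.
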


The reader will see in \S\ref{sec:orlik-solomon} that the short exact sequence  can indeed be construed in terms of logarithmic derivations.
The first assertion in Proposition~\ref{prop:degeneracy-locus} also implies:
\begin{corollary}\label{prop:Koszul-W}
    Keeping the notation and assumptions as in Proposition~\ref{prop:degeneracy-locus}. We have a Koszul resolution
    \begin{equation}
        0 \to \det \scq^\vee_L \to \cdots \to \bigwedge^2 \scq^\vee_L \to \scq^\vee_L \to
        \sco_{X_E} \to \sco_{W_L} \to 0.
    \end{equation}
    In particular, we have canonical bundle $\omega_{W_L}$ is given by $(\omega_{X_E} \otimes \det \scq_L)|_{W_L}$.
\end{corollary}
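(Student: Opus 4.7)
The plan is to deduce both assertions from the Cartesian square in Proposition~\ref{prop:degeneracy-locus}(i), by checking that the cutting section $s \in H^0(X_E,\scq_L)$ is in fact regular. The entire argument is the standard ``zero locus of a regular section'' package applied to this situation.

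First I would compute dimensions to confirm that $W_L$ has the expected codimension. One has $\dim X_E = n$ and, since $W_L$ is the strict transform of $\P L$ under the iterated blow-up $\pi_E$, also $\dim W_L = \dim \P L = r-1$, where $r = \dim L$. On the other hand, the rank of $\scq_L$ equals $n+1-r$, and
\[
    \codim_{X_E} W_L \;=\; n - (r-1) \;=\; n+1-r \;=\; \rk \scq_L.
\]
Thus the zero locus of $s$ meets the expected codimension. Since both $X_E$ and $W_L$ are smooth (the former by construction of $\Sigma_E$, the latter by the de~Concini--Procesi construction as an iterated blow-up along smooth centres), the local equations for $W_L$ in $X_E$ cut out a smooth subscheme of codimension $\rk \scq_L$, hence form a regular sequence at every point. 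Equivalently, $s$ is a regular section of $\scq_L$.

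Once regularity is in hand, the Koszul complex on $s$,
\[
    0 \to \det \scq_L^\vee \to \cdots \to \bigwedge^2 \scq_L^\vee \to \scq_L^\vee \to \sco_{X_E} \to 0,
\]
is exact and resolves $\sco_{W_L}$, giving the first assertion. For the canonical bundle, the conormal sheaf of $W_L$ in $X_E$ is canonically identified, via the regular section $s$, with $\scq_L^\vee|_{W_L}$; equivalently, the normal bundle is $N_{W_L/X_E} = \scq_L|_{W_L}$. The adjunction formula for a regular embedding then yields
\[
    \omega_{W_L} \;=\; \omega_{X_E}|_{W_L} \otimes \det N_{W_L/X_E} \;=\; (\omega_{X_E} \otimes \det \scq_L)|_{W_L},
\]
as claimed.

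The only real thing to verify is the regularity of $s$, and this is forced: the scheme-theoretic zero locus of $s$ equals $W_L$ by Proposition~\ref{prop:degeneracy-locus}(i), and we have just checked that $W_L$ is smooth of the expected codimension inside the smooth ambient $X_E$. Everything else is formal from the Koszul resolution and adjunction, so I would not expect any subtlety beyond the dimension count above.
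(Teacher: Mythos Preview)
Your proof is correct and is exactly the approach the paper has in mind: the paper simply asserts that the corollary follows from Proposition~\ref{prop:degeneracy-locus}(i), and you have spelled out the standard dimension count and adjunction argument that makes this implication precise. There is nothing to add.
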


The Koszul resolution above, combined with the following homological algebra fact, allows us to pass vanishing statements from the permutohedral variety $X_E$ to wonderful varieties $W_L$.

\begin{fact}[{\cite[Proposition~B.1.2]{LazarsfeldPositivityI}}]\label{fact:lazarsfeld}
    Let $X$ be a finite-type $\kk$-scheme and $\scg$ be a coherent sheaf on $X$ that is quasi-isomorphic to a bounded complex $\scg^\bullet$ of coherent sheaves. For an integer $k\ge 0$, then $H^k(\scg)=0$ as long as \[
    H^a(\scg^b) = 0,\quad\text{for $a,b$ such that $a+b=k$.} \]
\end{fact}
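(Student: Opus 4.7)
The plan is a standard hypercohomology argument. Because $\scg$ (placed in degree $0$) is quasi-isomorphic to $\scg^\bullet$ as objects of the bounded-below derived category of $\sco_X$-modules, applying the right derived functor $\mathbf{R}\Gamma(X,-)$ to either one computes the same object, so in particular
$$H^k(X,\scg) \;\cong\; \HH^k(X,\scg^\bullet).$$
It therefore suffices to show that $\HH^k(X,\scg^\bullet)=0$ under the stated hypothesis.

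To do so, I would compute the right-hand side using the hypercohomology spectral sequence
$$E_1^{p,q} = H^q(X,\scg^p) \;\Rightarrow\; \HH^{p+q}(X,\scg^\bullet),$$
obtained by taking a Cartan--Eilenberg resolution of $\scg^\bullet$ by injective $\sco_X$-modules, applying $\Gamma(X,-)$, and filtering the resulting bicomplex by columns. Because $\scg^\bullet$ is bounded, the spectral sequence has only finitely many nonzero columns, so it converges strongly to the hypercohomology.

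The vanishing hypothesis says exactly that every entry $E_1^{p,q}$ on the anti-diagonal $p+q=k$ is zero. Since each $E_{r+1}^{p,q}$ is a subquotient of $E_r^{p,q}$, an induction on $r$ shows that $E_r^{p,q}=0$ on the anti-diagonal $p+q=k$ for every $r$, including $r=\infty$. The abutment filtration of $\HH^k(X,\scg^\bullet)$ has associated graded $\bigoplus_{p+q=k} E_\infty^{p,q}$, which is zero, so $\HH^k(X,\scg^\bullet)=0$.

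There is no substantive obstacle here: the argument is purely organisational. The only two things to verify are the quasi-isomorphism invariance of $\mathbf{R}\Gamma$ and the strong convergence of the Cartan--Eilenberg spectral sequence, both of which are standard consequences of the boundedness of $\scg^\bullet$ (no property of $X$ beyond being a Noetherian scheme is actually used in the proof).
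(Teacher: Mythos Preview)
Your argument is correct and is exactly the standard hypercohomology spectral sequence proof. The paper does not actually supply a proof of this statement: it records it as a Fact with a citation to \cite[Proposition~B.1.2]{LazarsfeldPositivityI}, and the argument you give is essentially the one found there.
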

\subsubsection{Matroidal operations and tautological bundles}
During the proof of Theorem~\ref{thm:SQvanishing}, the following pieces of notation will be useful. Given a linear subspace $L\subseteq \kk^E$ and a subset $S \subseteq E$, we denote by $L / S$ the intersection $L\cap \kk^{E \setminus S} \times \{0\}^S$, understood as a subspace of $\kk^{E\setminus S}$; by $L \setminus S$ the image of $L$ under the projection $\kk^E \to \kk^{E\setminus S}$; and finally, by $L|S = L \setminus (E \setminus S)$.
In matroid theory literature, these operations are respectively contraction, deletion, and restriction.

Additionally, we denote by $L^\perp$ the linear subspace $(\kk^E/L)^\vee \subseteq (\kk^E)^\vee \simeq \kk^E$. The linear subspace $L^\perp$ gives a realisation of the dual matroid of $L$. We also note that the tautological bundles are well-behaved under duality: we have $\scq_{L^\perp} \simeq \crem^\ast \scs_L^\vee$ and similarly for $\scs_{L^\perp}$.

We say an element $e\in E$ is a \textit{loop} (resp. \textit{coloop}) of $L$ if $L$ is contained in the coordinate subspace $\kk^{E\setminus e} \times \{0\}^{\{e\}}$ (resp. $L$ contains the coordinate line $\kk^{\{e\}}$). In particular, a linear subspace $L\subseteq \kk^E$ satisfies Condition~\eqref{eq:ll} if and only if it has no loops.

Finally, we say a word about restricting tautological bundles to torus invariant subvarieties in $X_E$. These pieces of notation will only make brief appearances in \S\ref{subsec:restriction-to-fibre}. Given an ordered set partition $\scf$, we denote by
\begin{equation}\label{eq:AKdecomp}
    L_{\scf} = \bigoplus_i\, L|G_i(\scf)/G_{i-1}(\scf) \subseteq \bigoplus_i \kk^{F_i}
\end{equation}
the direct sum decomposition into minors that arises from the flag $\{G_i(\scf)\}$.
\begin{proposition}\label{prop:restriction-to-subvar}
    Under the $T$-equivariant identification, $Z_\scf\simeq X_{F_1} \times \cdots X_{F_\ell}$,
    where $T$ acts on each factor in the right-hand term by the obvious projection $T \to
    \Gm^{F_{i}}$, there is a $T$-equivariant isomorphism \[
        \scs_L|_{Z_\scf} \simeq \boxplus_{i=1}^\ell \scs_{L_{G_i/G_{i-1}}}.
    \]
\end{proposition}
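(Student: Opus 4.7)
The plan is to factor the log Gauss map $\phi_L$ restricted to $Z_\scf$ through a product of log Gauss maps of the minors $L_{G_i/G_{i-1}} = L|G_i/G_{i-1}$, and then pull back the tautological sub-bundle along this factorization. Write $r_i = \dim L_{G_i/G_{i-1}}$ and consider the $T$-equivariant closed embedding
\[
\iota_\scf \colon \prod_{i=1}^\ell \Gr(r_i,\kk^{F_i}) \hookrightarrow \Gr(r,\kk^E), \qquad (L_i')_i \longmapsto \bigoplus_i L_i',
\]
where $T = \Gm^E$ acts on the source through the projections $T \to \Gm^{F_i}$ and we use the canonical identification $\bigoplus_i \kk^{F_i} = \kk^E$. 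Under $\iota_\scf$, the tautological sub-bundle of $\Gr(r,\kk^E)$ pulls back $T$-equivariantly to the external direct sum $\boxplus_i \scs_{\Gr(r_i,\kk^{F_i})}$.

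The main step is to show that the restriction $\phi_L|_{Z_\scf}$, combined with the identification $Z_\scf \simeq \prod_i X_{F_i}$, agrees with the composition
\[
\prod_{i=1}^\ell X_{F_i} \xrightarrow{\ \prod_i \phi_{L_{G_i/G_{i-1}}}\ } \prod_{i=1}^\ell \Gr(r_i, \kk^{F_i}) \xhookrightarrow{\ \iota_\scf\ } \Gr(r,\kk^E).
\]
Since both sides are $T$-equivariant and the big open torus of $Z_\scf$ is dense, it suffices to verify the equality at the distinguished point $p_\scf$. This is the classical Gelfand--Goresky--MacPherson--Serganova degeneration underlying Proposition~\ref{prop:ggms}: using the one-parameter subgroup $\lambda_\scf$ with $\lambda_\scf(s)_j = s^{i-1}$ for $j \in F_i$, whose image in $\P T$ tends to $p_\scf$ as $s \to 0$, a direct analysis in Plücker coordinates shows that the only Plücker coordinates of $\lambda_\scf(s)^{-1} L$ surviving in the limit are those indexed by bases $I$ with $|I \cap F_i| = r_i$ for every $i$, and that these factor into products of Plücker coordinates of the minors $L_{G_i/G_{i-1}}$. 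Hence $\phi_L(p_\scf) = \bigoplus_i L_{G_i/G_{i-1}} = L_\scf$, which lies in $\iota_\scf(\prod_i \Gr(r_i,\kk^{F_i}))$, and $T$-equivariance promotes this to the asserted factorization.

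Granted the factorization, pulling back the tautological sub-bundle yields the $T$-equivariant chain of isomorphisms
\[
\scs_L|_{Z_\scf} \simeq \bigl(\textstyle\prod_i \phi_{L_{G_i/G_{i-1}}}\bigr)^\ast \iota_\scf^\ast \scs_{\Gr(r,\kk^E)} \simeq \boxplus_i \phi_{L_{G_i/G_{i-1}}}^\ast \scs_{\Gr(r_i,\kk^{F_i})} \simeq \boxplus_i \scs_{L_{G_i/G_{i-1}}},
\]
which is the desired description. The main technical obstacle is the GGMS limit computation in the key step---in particular, keeping track of the inversion $t \mapsto t^{-1}$ in the definition of $\phi_L$ and the identification of the limiting Plücker coordinates with products coming from the minors; once that is in hand, the rest is formal pullback bookkeeping.
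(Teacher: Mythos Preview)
The paper does not supply its own proof here; it simply refers to \cite[Proposition~3.6 and Proposition~5.3]{best} and \cite[Lemma~2.5]{bestCohomology}. Your argument---factoring $\phi_L|_{Z_\scf}$ through the direct-sum embedding $\iota_\scf$ of the product of smaller Grassmannians by identifying the GGMS limit $\phi_L(p_\scf)=L_\scf$, and then pulling back the tautological sub---is exactly the standard direct proof and is in substance what those references do.

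One small correction is in order. The one-parameter subgroup you chose, $\lambda_\scf(s)_j = s^{i-1}$ for $j\in F_i$, has cocharacter class $-\sum_{i<\ell}\bar\be_{G_i}$ in $N_E$, which lies in the interior of $-\sigma_\scf$ rather than $\sigma_\scf$. Consequently its image in $\P T$ tends to $p_{\scf^{\mathrm{rev}}}$ (for the reversed partition $\scf^{\mathrm{rev}}=(F_\ell,\dots,F_1)$) as $s\to 0$, not to $p_\scf$; and the Pl\"ucker coordinates surviving in $\lim_{s\to 0}[\lambda_\scf(s)^{-1}L]$ are those adapted to the \emph{reversed} flag, so in general $|I\cap F_i|\neq r_i$. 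Replacing $s^{i-1}$ by $s^{\ell-i}$ (equivalently, taking the limit $s\to\infty$ in your original $\lambda_\scf$) lands you in the interior of $\sigma_\scf$ and makes both the limit point and the Pl\"ucker analysis come out as you stated. With that fix the argument is complete.
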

We refer to \cite[Proposition~3.6 and Proposition~5.3]{best} and \cite[Lemma~2.5]{bestCohomology} for a proof.

\section{Characteristic-free vanishing results \textit{via} pushforwards}\label{sec:SQproof}

This section is mainly devoted to the proof of Theorem~\ref{thm:SQvanishing}. It will be a generalisation of arguments of Eur in \cite{bestCohomology}. At the end of the section, we also deduce a Tutte polynomial identity, and a special case of Theorem~\ref{thm:bwb-vanishing} for hook-shaped partitions.

Throughout the section, we will assume $\kk$ is algebraically closed. The case for arbitrary fields follows from passing to an algebraic closure, thanks to flat base change~\cite[\href{https://stacks.math.columbia.edu/tag/02KH}{Lemma 02KH}]{stacks-project}.

The strategy towards Theorem~\ref{thm:SQvanishing} consists of three steps:
\begin{enumerate}[label={\arabic*.}]
    \item First of all, we study a variant of tautological bundles of linear spaces, defined on $\P^1$. On a single rational curve, we know \emph{a priori}, thanks to Birkhoff--Grothendieck, that the tautological bundles $\scs_L$ and $\scq_L$ restricted to an irreducible rational curve split into direct sums of line bundles. As a key enhancement, the work of Eur~\cite{bestCohomology} determines the line bundle summands. Our plan in \S\ref{subsec:bestP1} is to work with these summands and to control cohomology groups of tautological bundles.
    \item For the second step, we recall from Proposition/Definition~\ref{propdet:deletion} that the deletion map $f$ is flat and has fibres given by chains of $\P^1$\/s. We will introduce some auxiliary vector bundles $\scl_1,\scn_1,\scl_2,\scg$ on $X_E$. The vector bundles introduced in Step~1 will turn out to coincide with the tautological bundles $\scs_L$ and $\scq_L$ on $X_E$ restricted to a fibre. Therefore, the theorem of cohomology and base change lets us kill higher direct images of the corresponding vector bundles on $X_E$; see Lemma~\ref{lem:derivedVanshings} for precise statements. This will be carried out in \S\ref{subsec:restriction-to-fibre}.
    \item Finally, the main content of \S\ref{subsec:final-computation} is an inductive procedure to derive vanishing statements for exterior power of $\sce_L$. Running a Leray spectral sequence and applying the previous step, we know that it is enough to control the higher cohomology of the pushforward $f_\ast \bigwedge^d \sce_L$. To this end, we will consider a map \[
            \tau\colon f^*(\sce_{L/n} \oplus \sco_{X_{E\setminus n}})\to \sce_L.
        \]
    We use the higher direct image vanishing results from the previous step to compute explicitly the pushforward of exterior powers of $\scs_L$ and $\scq_L^\vee$ in Lemma~\ref{lem:pushS}. This will assists us with understanding kernels and cokernels of the pushforward of various exterior powers $ \bigwedge^d \tau$; see Lemma~\ref{lem:pushforward-L1-wedgeQ} and Corollary~\ref{cor:pushforward-L1-wedgeG}. Once this is done, we can leverage the inductive hypothesis to control the cohomology of these kernels and cokernels of $f_\ast \bigwedge^d \tau$, which in turn gives control over the cohomology of $f_\ast \bigwedge^d \sce_L$.
\end{enumerate}
\begin{remark}\label{rm:eur-gap}
    To obtain the higher cohomology vanishing for $\bigwedge^d \scq_L$ for $d\ge 0$, \cite{bestCohomology} misses out Step~3 and claims the direct images $f_*\bigwedge^d\scq_L$ are given by the direct sum of exterior powers of $\scq_{L/n}$ and $\scq_{L\setminus n}$. A similar assertion is made for exterior powers of $\scs_L$ to obtain higher cohomology vanishing for $\bigwedge^d \scs_L$;  \cite[Theorem~1.2]{bestCohomology}. However, the argument given there is incomplete.
    
    We now explain the gap in their argument. In the case when $n$ is not a loop or a coloop, \cite[Theorem~1.6]{bestCohomology} claims that $f_\ast \bigwedge^d \scq_L$ is the vector bundle and isomorphic to $\bigwedge^{d-1} \scq_{L\setminus n} \oplus \bigwedge^{d}\scq_{L/n}$. The strategy there is to carry out Step~1 and Step~2 to show that the $f_\ast \bigwedge^d \scq_L$ is a vector bundle with fibre isomorphic to  $\bigwedge^{d-1} \scq_{L\setminus n} \oplus \bigwedge^{d}\scq_{L/n}$, but this only implies the former is a vector bundle of the same rank as the latter. An identical gap occurs in \cite[Theorem~1.5]{bestCohomology}, where it is announced that $f_\ast \bigwedge^d \scs_L$ is isomorphic to $\bigwedge^d \scs_{L/n}$, when $n$ is not a coloop, whereas the proof only yields that these vector bundles share the same rank. 

    Our methods are heavily inspired by that of \cite{bestCohomology}, but our argument requires some new computation of homomorphisms between various vector bundles. In particular, we will produce a map from $\bigwedge^d \scs_{L/n}$ to the direct image $f_\ast\bigwedge^d \scs_L$ and salvage \cite[Theorem~1.5]{bestCohomology} in Lemma~\ref{lem:pushS}. It turns out these exterior powers of $\scs_L$ are the only vector bundles whose direct images we need explicit descriptions for. Using this, we deduce Theorem~\ref{thm:SQvanishing}, which is sufficient to salvage the vanishing statements for exterior powers of $\scs_L$ and $\scq_L$ from \cite[Theorem~1.2]{bestCohomology}.
    Moreover, by some homological algebra manipulations, we show that any hook-shaped Schur powers of $\scq_L$ has vanishing higher cohomology in Proposition~\ref{prop:bwb-hook-charp}, salvaging in particular the vanishing statements for symmetric powers of $\scq_L$ from \cite[Theorem~1.3]{bestCohomology}. Finally, while we cannot fully salvage \cite[Theorem~1.6]{bestCohomology}, we will show in Appendix~\ref{appendix:exterior-quotient} that the space of global sections $f_\ast \bigwedge^d \scq_L$ does have the same dimension as that of $H^0(\bigwedge^{d-1} \scq_{L\setminus n} \oplus \bigwedge^{d}\scq_{L/n})$. The main obstacle of going beyond global sections is we cannot find an explicit map between the two vector bundles in question. We will also separate out a simplified argument to obtain higher vanishing for exterior powers of $\bigwedge^d\scq_L$ without an explicit description of the direct image of $\bigwedge^d\scq_L$. 
\end{remark}
A somewhat special and yet illustrative example throughout this section is the case when we take $L$ to be a general hyperplane $H\subseteq \kk^E$. In this case, the short exact sequence $0\to \scs_H \to \sco^E_{X_E} \to \scq_H \to 0$ is the pullback along the blow-up map $\pi_E\colon X_E \to \P^n$ of the Euler sequence \[
    0\to \Omega^1_{\P^n}(1) \to \sco^E_{\P^n} \to \sco(1) \to 0.
\]  The matroid for $H$ is the rank-$n$ uniform matroid $U_{n,E}$ on $E$. Then the assertion of Theorem~\ref{thm:SQvanishing} in this case is that \[
    H^p(\P^n, \Omega^{d-1}_{\P^n}(d) \oplus \Omega_{\P^n}^d(d)) =0,\quad\text{for $p>0$ and $d\ge 0$.}
\]
In this case, the result also follows from either a computation or applying Bott--Steenbrink--Danilov vanishing~\cite[Theorem~9.3.1]{CLS}.

\subsection{Tautological bundles on the projective line}\label{subsec:bestP1}
This subsection is devoted to study certain vector bundles $\P^1$ preluded in Step~1. Let $L$ be a $d$-dimensional linear subspace in $\kk^E$.
Let $\Gm$ act on $\kk^E$ by $\rho(t) = (1,\dots,1,t^{-1})$, which induces an action on $\Gr(d,\kk^E)$.
The $\Gm$-orbit closure $\overline{\Gm \cdot_\rho L}$ is isomorphic to either $\P^1$ or a
point. Either way, there is a $\Gm$-equivariant map
\[
    j_L\colon \P^1 \to \overline{\Gm \cdot_\rho L} \hookrightarrow \Gr(d,\kk^E),
\] mapping $t\in \Gm$ to $[\rho(t) L]$.

\begin{definition}
    Let $\scs'_L$ (resp. $\scq'_L$) be pullbacks \textit{via} the map $j_L$ of the tautological sub- (resp. quotient) bundle on
    $\Gr(d,\kk^E)$.
\end{definition}
Similarly we denote by $\sce'_L$ the direct sum $\scs'_L \oplus \scq'_L.$
Eur proved in \cite[Lemma~3.2, Remark~3.3]{bestCohomology}:
\begin{lemma}\label{eq:eur-lemma}
    Keeping the notation as above, we have two \emph{split} short exact sequences \[
        0\to \scs'_{L/n \oplus 0}  \to \scs_{L}' \to
        \scl_{S} \to 0,\quad\text{and}\quad 0\to \scl_{Q} \to \scq_L' \to  \scq'_{L\setminus n \oplus L \mid n} \to 0,
    \]
    where $\scs'_{L/n}$ is a trivial bundle with fibre $L/n$, $\scq'_{L\setminus n \oplus L|n}$ is a trivial bundle with fibre $\kk^{E \setminus n}/(L\setminus n)$, and finally, \[
        \scl_{S} =
        \begin{cases}
            0, & \text{if $n$ is a loop,}\\
            \sco\otimes \kk^{\{n\}}, & \text{if $n$ is a coloop,}\\
            \sco_{\P U}(-1), & \text{otherwise;}
        \end{cases} \quad \scl_{Q}=
        \begin{cases}
            0, & \text{if $n$ is a loop or coloop,}\\
            \sco_{\P U}(1) & \text{otherwise}.
        \end{cases}
    \]
    Here, in the last cases, the line bundles come from pulling back along an isomorphism $\P^1 \simeq \P U$ with $U = (L\setminus n)/(L/n)
    \oplus \kk$.
\end{lemma}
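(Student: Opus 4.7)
The plan is to construct each short exact sequence as a two-step filtration on the tautological bundles whose graded pieces are a trivial bundle and an explicit line bundle, then to split the resulting extensions using the vanishing $H^1(\P^1,\sco(1)) = 0$. First I dispose of the degenerate cases: if $n$ is a loop or coloop of $L$, every $\rho(t)$ fixes $L$ pointwise (either because $L$ has no $n$-th coordinate, or because $\rho$ merely rescales $e_n \in L$ within $L$), so $j_L$ is constant, both $\scs'_L$ and $\scq'_L$ are trivial bundles, and the claimed sequences follow from $L = L/n$ (loop case) or $L = L/n \oplus \kk e_n$ (coloop case), together with the dual statements for $\kk^E/L$.

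For the main case in which $n$ is neither a loop nor a coloop, the crucial observation is the sandwich
\[
    \sco_{\P^1} \otimes (L/n) \;\subseteq\; \scs'_L \;\subseteq\; \sco_{\P^1} \otimes (L\setminus n \oplus L|n)
\]
of subsheaves of $\sco_{\P^1} \otimes \kk^E$: the first inclusion holds because $L/n$ has vanishing $n$-th coordinate and is therefore $\rho$-invariant, while the second holds because the projection of $\rho(t) L$ to $\kk^{E\setminus n}$ equals $L \setminus n$ and its $n$-th component lies in $L|n = \kk e_n$. Passing to the quotient inside $\sco_{\P^1} \otimes \kk^E$ produces a filtration $\scl_Q \subseteq \scq'_L$ whose cokernel is the trivial bundle with fibre $\kk^{E\setminus n}/(L\setminus n)$, namely $\scq'_{L\setminus n \oplus L|n}$.

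It remains to identify the line bundles $\scl_S \coloneqq \scs'_L/(\sco_{\P^1} \otimes L/n)$ and $\scl_Q$. The sandwich factors $j_L$ through an induced $\Gm$-equivariant map
\[
    \tilde j_L \colon \P^1 \longrightarrow \P U \hookrightarrow \Gr(1, \kk^E/(L/n)),
\]
where $U = (L\setminus n \oplus L|n)/(L/n)$ is two-dimensional and $\tilde j_L(t)$ is the line $\rho(t) L/(L/n) \subseteq U$. By $\Gm$-equivariance, $\tilde j_L(0)$ and $\tilde j_L(\infty)$ are the two $\rho$-fixed coordinate lines of $\P U$, respectively the images of $\kk e_n$ and of $(L\setminus n)/(L/n)$, so $\tilde j_L$ is non-constant and therefore an isomorphism. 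Pulling back the tautological sub- and quotient bundles on $\P U$ identifies $\scl_S \cong \sco_{\P U}(-1)$ and $\scl_Q \cong \sco_{\P U}(1)$. Both extensions split because $\on{Ext}^1(\sco_{\P U}(-1), \sco_{\P^1}) = \on{Ext}^1(\sco_{\P^1}, \sco_{\P U}(1)) = H^1(\P^1, \sco(1)) = 0$.

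The main obstacle is the verification that $\tilde j_L$ is an isomorphism onto $\P U$ (rather than constant or of higher degree); this reduces to writing down the limits $\lim_{t\to 0}\rho(t) L = L/n \oplus \kk e_n$ and $\lim_{t\to\infty}\rho(t) L = L\setminus n$, and checking that they are distinct $d$-dimensional subspaces of $\kk^E$ precisely when $n$ is neither a loop nor a coloop.
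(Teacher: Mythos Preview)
Your proof is correct and complete. The paper does not give its own proof of this lemma; it simply cites \cite[Lemma~3.2, Remark~3.3]{bestCohomology}, so there is nothing to compare against in the present paper. Your argument---filtering $\scs'_L$ by the $\rho$-invariant subspace $L/n$ and the $\rho$-invariant overspace $L\setminus n \oplus L|n$, then recognising the associated graded line as the tautological line on $\P U$---is the standard one and is essentially what appears in Eur's paper.

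One small point of exposition: you write that $\tilde j_L$ is ``non-constant and therefore an isomorphism'', and in the final paragraph you reduce this to the limits at $0$ and $\infty$ being distinct. Distinctness of the limits rules out constancy but not \emph{a priori} higher degree. The missing sentence is that $\Gm$ acts on both source and target $\P^1$'s with the same weight (the two summands of $U$ carry $\rho$-weights $0$ and $-1$), so a $\Gm$-equivariant map that is non-constant restricts to a $\Gm$-equivariant map of $\Gm$-torsors on the open orbits and hence has degree~$1$. Equivalently, picking any $v\in L$ with $v_n\neq 0$, the image of $\tilde j_L(t)$ in $\P U$ is $[\,\bar v' : t^{-1}v_n\,] = [\,t\bar v' : v_n\,]$, visibly linear in~$t$. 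You clearly have this in mind (you invoke equivariance earlier), but it would strengthen the write-up to say so explicitly.
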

Recall that for $2$-dimensional vector space $U$, we have the identification $H^0(\P U, \sco_{\P U}(d)) = \sym^d U^\vee$ for
$d \ge 0$ and
$H^1(\P U, \sco_{\P U}(d)) = H^0(\sco_{\P U}(-2-d))^\vee = 0$ for $d \ge -1$.
\begin{lemma}\label{lem:fibreVanishing}
    Keeping the notation as above, we have
    \begin{enumerate}
        \item $H^1(\P^1, \bigwedge^d \sce'_{L}) = 0$ for all $d\ge 0$.
            and \[
                H^0(\bigwedge^d \sce'_{L}) \simeq
                \bigwedge^d [L/n \oplus \kk^{E\setminus n}/(L/n) \oplus \kk].
            \]
        \item $H^1(\P^1, \scl_S \otimes \bigwedge^d \scq_L') = 0$ for all $d\ge 0$.
    \end{enumerate}
\end{lemma}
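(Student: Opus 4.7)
The plan is to exploit the two split short exact sequences from Lemma~\ref{eq:eur-lemma} and reduce everything to elementary cohomology computations on $\P^1$. Writing $T_S = \scs'_{L/n\oplus 0}$ and $T_Q = \scq'_{L\setminus n \oplus L|n}$ for the trivial summands, we have $\scs'_L \cong T_S \oplus \scl_S$ and $\scq'_L \cong \scl_Q \oplus T_Q$, so \[
    \sce'_L \;\cong\; T \oplus \scl_S \oplus \scl_Q, \quad\text{with } T = T_S \oplus T_Q \text{ trivial.}
\] Since $\scl_S$ and $\scl_Q$ are line bundles, expanding the exterior power of a direct sum gives \[
    \bigwedge^d \sce'_L \;\cong\; \bigwedge^d T \;\oplus\; \Bigl(\bigwedge^{d-1}T\otimes \scl_S\Bigr) \;\oplus\; \Bigl(\bigwedge^{d-1}T\otimes \scl_Q\Bigr) \;\oplus\; \Bigl(\bigwedge^{d-2}T \otimes \scl_S\otimes \scl_Q\Bigr).
\]

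For part~(i), the key observation is that in every one of the three cases of Lemma~\ref{eq:eur-lemma}, each of $\scl_S$, $\scl_Q$, and $\scl_S\otimes \scl_Q$ is isomorphic to $\sco_{\P^1}(k)$ for some $k \in \{-1,0,1\}$: indeed, in the loop/coloop cases one or both of $\scl_S, \scl_Q$ vanish or are trivial, while in the generic case $\scl_S\otimes \scl_Q\cong \sco_{\P U}(-1)\otimes \sco_{\P U}(1)\cong \sco_{\P^1}$. Since $H^1(\P^1,\sco(k))=0$ for $k\ge -1$ and $\bigwedge^p T$ is trivial, the K\"unneth-type formula $H^1(\P^1, \bigwedge^p T\otimes \sco(k)) = \bigwedge^p H^0(T)\otimes H^1(\sco(k))$ gives the vanishing $H^1(\bigwedge^d\sce'_L)=0$. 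For the identification of $H^0$, I would compute degree by degree:
\[
    H^0\bigl(\bigwedge^d\sce'_L\bigr)\;\cong\;\bigwedge^d H^0(T)\oplus \bigwedge^{d-1}H^0(T)\otimes H^0(\scl_Q)\oplus \bigwedge^{d-2}H^0(T),
\] using $H^0(\scl_S)=0$ in the generic case and $H^0(\scl_S\otimes\scl_Q)=H^0(\sco_{\P^1})=\kk$. Then I would check that, regardless of the case, this additive contribution rearranges as $\bigwedge^d$ of a vector space isomorphic to $L/n\oplus \kk^{E\setminus n}/(L/n)\oplus \kk$: the trivial part contributes $H^0(T) = L/n\oplus \kk^{E\setminus n}/(L\setminus n)$, the factor $H^0(\scl_Q)$ contributes a $2$-dimensional space isomorphic to $U^\vee \cong (L\setminus n)/(L/n)\oplus \kk$, and the last summand accounts for the wedge with the $\kk$-factor. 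Reassembling via the short exact sequence $0\to (L\setminus n)/(L/n)\to \kk^{E\setminus n}/(L/n)\to \kk^{E\setminus n}/(L\setminus n)\to 0$ yields the claimed identification.

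For part~(ii), the analysis is even more transparent: $\bigwedge^d \scq'_L\cong \bigwedge^d T_Q \oplus (\bigwedge^{d-1}T_Q\otimes \scl_Q)$, so \[
    \scl_S\otimes \bigwedge^d\scq'_L \;\cong\; \bigl(\scl_S\otimes \bigwedge^d T_Q\bigr) \;\oplus\; \bigl(\scl_S\otimes \scl_Q\otimes \bigwedge^{d-1}T_Q\bigr).
\] Since $\scl_S$ has degree in $\{0,-1\}$ (with the latter only in the generic case, in which $\scl_S\otimes \scl_Q\cong \sco_{\P^1}$), both summands are direct sums of copies of $\sco_{\P^1}(k)$ with $k\ge -1$, and the vanishing $H^1(\P^1,\sco(k))=0$ finishes the proof. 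The only real obstacle is the bookkeeping, which boils down to the crucial cancellation $\scl_S\otimes \scl_Q\cong \sco_{\P^1}$ that keeps all line-bundle summands in the range where $H^1$ vanishes.
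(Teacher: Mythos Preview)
Your proposal is correct and follows essentially the same approach as the paper: both split $\sce'_L$ via Lemma~\ref{eq:eur-lemma}, expand the exterior power, and observe that every line-bundle summand has degree at least $-1$, then reassemble the $H^0$ pieces using the short exact sequence $0\to (L\setminus n)/(L/n)\to \kk^{E\setminus n}/(L/n)\to \kk^{E\setminus n}/(L\setminus n)\to 0$. The only cosmetic difference is that the paper tracks $H^0(\scl_S\otimes\scl_Q)$ as the canonical one-dimensional space $\det U=(L\setminus n)/(L/n)$ rather than an abstract copy of $\kk$, which makes the regrouping slightly more natural but is not essential.
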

\begin{proof}
    \underline{Statement~(i).} If $n$ is a loop or a coloop, then one can check that $\sce_L'$ is trivial with fibre identified as $L/n \oplus
    \kk^{E/n}/(L/n) \oplus \kk$ by Lemma~\ref{eq:eur-lemma}, so the result follows.
    Henceforth assume $n$ is neither a loop nor a coloop.
    Upon choosing a splitting of the two short exact sequence in Lemma~\ref{eq:eur-lemma}, we have a noncanonical isomorphism \[
        \sce'_L \simeq   [L/n \oplus \kk^{E \setminus n}/(L\setminus n)]\otimes \sco_{\P^1} \oplus \scl_S \oplus \scl_Q.
    \]
    Splitting $\bigwedge^d \sce_L'$ into line bundles, we obtain
    \begin{align*}
        \bigwedge^d \sce'_L \simeq & \bigwedge^d [L/n \oplus \kk^{E \setminus n}/(L\setminus n)]
        \otimes  \sco_{\P^1} \\ &  \oplus \bigwedge^{d-1} [L/n \oplus \kk^{E \setminus n}/(L\setminus
        n)] \otimes [\scl_Q \oplus \scl_S] \oplus \bigwedge^{d-2} [L/n \oplus \kk^{E \setminus
        n}/(L\setminus n)] \otimes \scl_Q \otimes \scl_S.
    \end{align*}

    In particular, any summand  has degree at least $-1$, which gives the first assertion.
    Next, taking the determinant of the Euler sequence for $\P U$, one obtains an isomorphism \(H^0(\scl_S \otimes \scl_Q) = \det U = (L \setminus n)/(L/n) \). Then, taking
    global section of $\bigwedge^d \sce'_L$ yields
    \begin{align*}
        H^0(\bigwedge^d \sce'_L)
        =  & \left\{\bigwedge^d [L/n \oplus \kk^{E \setminus n}/(L\setminus n)] \oplus
            \bigwedge^{d-1} [L/n \oplus \kk^{E \setminus n}/(L\setminus n)] \otimes (L \setminus
        n)/(L/n)\right\} \\
        & \oplus  \left\{\bigwedge^{d-1} [L/n \oplus \kk^{E \setminus n}/(L\setminus n)] \oplus
            \bigwedge^{d-2} [L/n \oplus \kk^{E \setminus n}/(L\setminus n)] \otimes (L \setminus
        n)/(L/n)\right\} \\
        = &  \bigwedge^d [L/n \oplus \kk^{E\setminus n} / (L /n) \oplus \kk],
    \end{align*}
    Here, the last isomorphism follows from the decomposition \[
        {\kk^{E\setminus n} / (L /n)}\oplus \kk = {\kk^{E\setminus
        n} / (L \setminus n)} \oplus {({L\setminus n})/({L/n})} \oplus \kk,
    \] which comes from taking global section of $\scq_L'$. The second assertion follows.

    \underline{Statement~(ii).} The strategy here is similar. The bundle $\scl_S\otimes \bigwedge^d \scq_L'$ is again trivial when $n$ is a loop or coloop. When $n$ is not a loop or coloop, we choose, by Lemma~\ref{eq:eur-lemma}, a splitting
    \[\scq'_L \simeq \scl_Q \oplus  \kk^{E \setminus n}/(L\setminus n) \otimes \sco_{\P^1}\]
    The line bundle direct summands in $\bigwedge^d \scq_L'$ can have degree $0$ and $1$. Therefore, the line bundles in $\scl_S \otimes \bigwedge^d \scq_L'$ have degree at least $-1$; this gives the second assertion.
\end{proof}

\subsection{Vanishing results for higher direct images}\label{subsec:restriction-to-fibre}
Consider the pullback $f^\ast \scs_{L\setminus n}$; this is the $T$-equivariant vector bundle on $X_E$ whose fibre over $\ol t \in \P T$ is $ t^{-1}(L/n \oplus 0)$, so we have an identification $\scs_{L/n\oplus 0} = f^\ast \scs_{L/ n}$ and $\scq_{L/n\oplus 0} = {f^\ast \scq_{L/n} \oplus  \sco_{X^E}^{\{n\}}}$ by \cite[Proposition~3.6]{best}. There is an injective map \[
\eps_1: f^\ast \scs_{L/ n} \to \scs_L,\] which is the induced from the inclusion $L/n \oplus 0 \hookrightarrow L$. We denote by
\begin{align*}
    \scl_1 = \ker \eps_1 = \parbox{8.5cm}{the $T$-equivariant quotient of $\scs_L$ on $X_E$ \\
    whose fibre at $\ol t\in \P T$ is $ t^{-1}L/(t^{-1}L/n\oplus 0)$.}
\end{align*}
We have the following diagram of short exact sequences of vector bundles on $X_E$,
\begin{equation}\label{eq:diagram-upstairs}
    \begin{tikzcd}
        &&& 0 \\
        & 0 && {\scl_1} \\
        0 & {f^\ast \scs_{L/ n}} & {\sco^{E\setminus n}_{X^E} \oplus \sco_{X^E}^{\{n\}}} & {f^\ast \scq_{L/n} \oplus  \sco_{X^E}^{\{n\}}} & 0 \\
        0 & {\scs_L} & {\sco_{X_E}^E} & {\scq_L} & 0 \\
        & {\scl_1} && 0 \\
        & 0
        \arrow[from=1-4, to=2-4]
        \arrow[from=2-2, to=3-2]
        \arrow[from=2-4, to=3-4]
        \arrow[from=3-1, to=3-2]
        \arrow[from=3-2, to=3-3]
        \arrow["\eps_1", from=3-2, to=4-2]
        \arrow[from=3-3, to=3-4]
        \arrow[equals, from=3-3, to=4-3]
        \arrow[from=3-4, to=3-5]
        \arrow["\eps_2",from=3-4, to=4-4]
        \arrow[from=4-1, to=4-2]
        \arrow[from=4-2, to=4-3]
        \arrow[from=4-2, to=5-2]
        \arrow[from=4-3, to=4-4]
        \arrow[from=4-4, to=4-5]
        \arrow[from=4-4, to=5-4]
        \arrow[from=5-2, to=6-2]
    \end{tikzcd}
\end{equation}
Here, the map $\eps_2$ is the natural $T$-equivariant quotient map, whose fibre at the identity of $\P T$ is the surjection $\kk^E/(L/n\oplus 0) \to \kk^E/L$.
Applying the snake lemma, we can also identify $\scl_1\simeq \ker \eps_2$.

Taking the direct sum of the first and third columns of Diagram~\eqref{eq:diagram-upstairs}, we obtain the four-term exact sequence
\begin{equation}\label{eq:four-term-upstairs}
    0\to \scl_1 \to f^\ast (\sce_{L/n}\oplus \sco_{X_{E\setminus n}}) \xrightarrow{\eps_1 \oplus \eps_2} \sce_{L} \to \scl_1 \to 0.
\end{equation}
Here, we will denote \[
    \scg = \im (\eps_1 \oplus \eps_2) \simeq \scq_L \oplus f^\ast S_{L/n},
\] and the second isomorphism here comes from inspecting Diagram~\eqref{eq:diagram-upstairs}.

Additionally, projecting $L$ to the two direct summands of $\kk^{E\setminus n}\oplus \kk^{\{{n\}}}$ gives rise to an embedding $L \hookrightarrow L \setminus n \oplus L |n$. This in turn induces a surjection $\scq_{L}\to \scq_{L\setminus n \oplus L | n}$. Similarly, denote by \[
    \scn_1 = \ker (\scq_{L}\to \scq_{L\setminus n \oplus L | n}) = \parbox{8.5cm}{the $T$-equivariant subbundle of $\scq_L$ on $X_E$ \\
        whose fibre at $\ol t\in \P T$ is \\
    $(t^{-1}L\setminus n\oplus L|n) / t^{-1}L$.}
\] Then, $\scn_1$ is either is a line bundle or zero. Dualising, we arrive at the short exact sequence
\begin{equation}\label{eq:diagram-for-Qvee}
    0\to \scq^\vee_{L\setminus n \oplus L|n} \to \scq^\vee_L\to  \scn_1^\vee \to 0.
\end{equation}

We wish to compute the (higher) direct images under $f$ of $\scl_1$, $\scn_1^\vee$, and exterior powers of $\sce_L$ and $\scg$.
The key to connecting the computation in \S\ref{subsec:bestP1} to these higher direct images
is the theorem of cohomology and base change. We only need a weaker statement
\begin{proposition}
    [Cohomology and base change]
    Let $g\colon X\to Y$ be a proper morphism of schemes, with $Y$ reduced and locally Noetherian. Assume $\scf$ is a coherent sheaf on $X$ that is $\sco_Y$-flat, and, for some integer $p$, $h^p(X_t,\scf|_{X_t})$ is a locally constant function with respect to $t\in Y$. Then the higher direct image $R^p g_*\scf$ is locally free with fibre   $t\in Y$ isomorphic to \(H^p(X_t, \scf|_{X_t})\).
\end{proposition}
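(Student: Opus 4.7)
The plan is to reduce to the affine case and deploy the Mumford--Grothendieck finite complex. The question is local on $Y$, so I may assume $Y = \spec A$ with $A$ reduced and Noetherian. Since $g$ is proper and $\scf$ is $A$-flat, a foundational result (EGA~III$_2$, \S7, or Mumford, \emph{Abelian Varieties}, \S II.5) produces a bounded complex $K^\bullet = (K^0 \xrightarrow{d^0} K^1 \to \cdots \to K^N)$ of finitely generated free $A$-modules such that for every $A$-module $M$ one has a functorial isomorphism
\[
    H^i(K^\bullet \otimes_A M) \;\simeq\; H^i(X \times_Y \spec A,\, \scf \otimes_A M).
\]
In particular $R^p g_* \scf = \widetilde{H^p(K^\bullet)}$ and $H^p(X_t, \scf|_{X_t}) = H^p(K^\bullet \otimes_A \kappa(t))$.

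What remains is linear algebra. Set $r^i(t) = \dim_{\kappa(t)} \im(d^i \otimes \kappa(t))$. Because ranks of matrices of regular functions can only drop under specialization, each $r^i$ is semicontinuous on $Y$: its value at a generic point of an irreducible component is the maximum on that component. On the other hand
\[
    h^p(t) \;=\; \rk_A(K^p) - r^p(t) - r^{p-1}(t).
\]
Local constancy of $h^p$ then forces $r^{p-1}$ and $r^p$ to be locally constant as well, after possibly shrinking $Y$.

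The next step is to invoke a standard lemma: over a reduced Noetherian ring $A$, a homomorphism $\phi \colon F \to G$ between finitely generated projective modules whose fibrewise rank is locally constant has locally free kernel, image, and cokernel, and the formation of each commutes with arbitrary base change. Applied to $d^{p-1}$ and $d^p$, this yields that $\im d^{p-1} \subseteq \ker d^p$ are locally free $A$-modules with locally free quotient $H^p(K^\bullet)$, and base-change compatibility then delivers, for every $t \in Y$,
\[
    (R^p g_* \scf) \otimes \kappa(t) \;\xrightarrow{\sim}\; H^p(K^\bullet \otimes_A \kappa(t)) \;=\; H^p(X_t, \scf|_{X_t}),
\]
as desired.

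The main obstacle is really the input from EGA/Mumford producing the finite complex $K^\bullet$; once that is granted the argument is elementary. Alternatively, one can simply cite Grauert's theorem, which packages precisely this conclusion.
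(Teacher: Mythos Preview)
Your argument is the standard proof of Grauert's theorem and is correct. The paper does not give its own proof of this proposition: it simply cites \cite[Th\'eor\`eme~3.2.1]{EGAIII1} and moves on, so there is nothing to compare against beyond noting that your sketch is precisely the classical argument underlying that reference (or Mumford, \emph{Abelian Varieties}, \S II.5, or Hartshorne III.12.9). One small remark: the step ``local constancy of $h^p$ forces $r^{p-1}$ and $r^p$ to be locally constant'' deserves a sentence of justification---it works because both $-r^{p-1}$ and $-r^p$ are upper semicontinuous integer-valued functions with locally constant sum, so neither can strictly drop without the other strictly rising. Everything else is clean.
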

A proof can be found in, for example,
\cite[Th\'eor\`eme~3.2.1]{EGAIII1}. We also need the following;
\begin{lemma}[{\cite[Lemma~2.5, Proposition~2.6]{bestCohomology}}]\label{prop:constFibre}
    Let $\scf$ be an ordered set partition of $E\setminus n$ with $\ell$ parts, and $t\in T'$. Recall from Proposition~\ref{propdet:deletion} that the fibre $C=f^{-1}(t p_\scf)$ has components $C(t,i) = (t,1).\P^1_{\scf(i)}$.
    \begin{enumerate}
        \item The
            restriction of $\scs_L$ to the component $\P^1_{\scf(i)}$ is given by $\scs'_{L_{\scf(i)}}$, where we recall $L_{\scf(i)}$ is defined in Eq.\ref{eq:AKdecomp}.
        \item Moreover, the bundles $\scs_L|_C$ and $\scq_L|_C$ are trivial if $n$ is a loop or coloop and nontrivial at one component otherwise. In the latter case, there exists exactly one $k\in [\ell]$ so that $\scs_{L}|_{C(t,i)}$ is nontrivial.
    \end{enumerate}
\end{lemma}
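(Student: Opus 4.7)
The plan is to prove the two parts in turn; Part~(i) is the core geometric identification and Part~(ii) a combinatorial consequence of it combined with Lemma~\ref{eq:eur-lemma}.

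For Part~(i), I would exploit $T$-equivariance of the map $\phi_L \colon X_E \to \Gr(d, \kk^E)$ from Proposition~\ref{prop:ggms}. The first step is to parametrize the component $\P^1_{\scf(i)}$ as the closure of a $\Gm^{\{n\}}$-orbit through the distinguished point $p_{\scf(i)}$, where $\Gm^{\{n\}} \subseteq T$ is the $n$th coordinate subtorus (this is the subtorus contained in $\ker(T \to T')$ whose orbits are contracted by $f$, consistent with the description of the fibre in Proposition/Definition~\ref{propdet:deletion}). The second step is the identification $\phi_L(p_{\scf(i)}) = [L_{\scf(i)}]$ in the Grassmannian; this is a standard toric limit computation using Proposition~\ref{prop:ggms} together with the description of $\phi_L(X_E) = \overline{T\cdot[L]} = X_{\rmb(L)}$ and the decomposition~\eqref{eq:AKdecomp}. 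The third step then applies $T$-equivariance with respect to the Grassmannian action $t \cdot [W] = [t^{-1} W]$ to obtain that the restriction of $\phi_L$ to $\Gm^{\{n\}} \cdot p_{\scf(i)}$ sends $s \in \Gm$ to $[\rho(s) L_{\scf(i)}]$ with $\rho(s) = (1,\dots,1,s^{-1})$; this is precisely the defining formula of the map $j_{L_{\scf(i)}}$. Pulling back $\scs_{\Gr(d,E)}$ along the equality of maps $\phi_L|_{\P^1_{\scf(i)}} = j_{L_{\scf(i)}}$ yields $\scs'_{L_{\scf(i)}}$ by definition.

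For Part~(ii), I would combine Part~(i) with the trichotomy in Lemma~\ref{eq:eur-lemma}. When $n$ is a loop (resp.\ coloop) of $L$, it is also a loop (resp.\ coloop) of every minor $L_{\scf(i)}|_{F_i \cup \{n\}}$ (since loops and coloops are preserved under the contraction/restriction operations defining these minors), so Lemma~\ref{eq:eur-lemma} forces each component restriction $\scs_L|_{\P^1_{\scf(i)}}$ and $\scq_L|_{\P^1_{\scf(i)}}$ to be trivial; compatibility of the trivializations at the nodes $p_{\scf^{i+1}}$ (where both neighbouring fibres identify with the same subspace/quotient of $\kk^E$) upgrades this to triviality of $\scs_L|_C$ and $\scq_L|_C$. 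When $n$ is neither a loop nor coloop of $L$, the matroidal observation I would invoke is that there is a unique index $k \in [\ell]$ for which $n$ is neither a loop nor coloop of the minor $L_{\scf(k)}|_{F_k\cup\{n\}}$: this $k$ is the unique step of the filtration $\{G_j(\scf)\}$ where the rank of $L|G_k/G_{k-1}$ strictly exceeds that of $L|(G_k\setminus n)/G_{k-1}$ and is strictly less than it plus one after further quotienting, which follows from a rank-counting argument applied to the flag $G_\bullet(\scf)$ in $L$. For $i \neq k$, $n$ becomes a loop or coloop of $L_{\scf(i)}|_{F_i\cup\{n\}}$ and the preceding case applies.

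The main obstacle I expect is the matroidal bookkeeping in Part~(ii), specifically verifying the uniqueness of the index $k$ at which $n$ is ``activated''; Part~(i) is essentially formal once the fixed-point computation $\phi_L(p_{\scf(i)}) = [L_{\scf(i)}]$ is established, which itself is a routine consequence of the toric description of $\phi_L$ in Proposition~\ref{prop:ggms}.
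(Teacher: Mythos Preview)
The paper does not prove this lemma; it is stated as a quotation of \cite[Lemma~2.5, Proposition~2.6]{bestCohomology} with no argument given (the surrounding Proposition~\ref{prop:restriction-to-subvar} is likewise deferred to \cite{best,bestCohomology}). There is therefore no in-paper proof to compare against.

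Your proposal is correct and is essentially the argument one finds in the cited reference. For Part~(i), the three steps are exactly right: the identification $\phi_L(p_{\scf(i)}) = [L_{\scf(i)}]$ is the content of Proposition~\ref{prop:restriction-to-subvar} specialised to the distinguished point of the stratum, and $T$-equivariance with respect to the $\Gm^{\{n\}}$-action then forces $\phi_L|_{\P^1_{\scf(i)}} = j_{L_{\scf(i)}}$. For Part~(ii), your rank-counting sketch is slightly garbled (note $n \notin G_k(\scf)$, so ``$G_k \setminus n$'' is not the relevant object), but the idea is right and can be made clean as follows. Set $a_i = \rk_L(G_i(\scf) \cup \{n\}) - \rk_L(G_i(\scf)) \in \{0,1\}$ for $0 \le i \le \ell$. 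Submodularity of the rank function makes $(a_i)$ nonincreasing; moreover $n$ is a loop (resp.\ coloop) of the minor $L|(G_i \cup \{n\})/G_{i-1}$ precisely when $a_{i-1} = 0$ (resp.\ $a_i = 1$). If $n$ is neither a loop nor a coloop of $L$ then $a_0 = 1$ and $a_\ell = 0$, so the sequence drops exactly once, at a unique $k$; in the loop (resp.\ coloop) case the sequence is identically $0$ (resp.\ $1$). Finally, triviality on each component of $C$ upgrades to triviality on $C$ because a vector bundle on a chain of $\P^1$'s that is trivial on every component is globally trivial.
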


Combining the previous two results as well as Lemma~\ref{lem:fibreVanishing} gives the following. This argument is already given in \cite{bestCohomology}, but we will present it for completeness.
\begin{lemma}\label{lem:derivedVanshings}
    We have the following vanishing results for direct images:
    \begin{enumerate}
        \item For any $d\ge 0$, $R^{i} f_\ast \bigwedge^d \sce_L  = 0$ for all $i> 0$ and $f_\ast \bigwedge^d \sce_L$ is a vector bundle of rank \[
                \dim \bigwedge^d [L/n \oplus \kk^{E\setminus n}/(L/n) \oplus \kk];
            \]
            in particular, $R^i f_\ast (\bigwedge^a \scs_L \otimes \bigwedge^b \scq_L) =0$ for $i > 0$ and $a,b\ge 0$.
        \item $R^{i} f_\ast \scl_1 = 0$ for all $i > 0$, and $f_\ast \scl_1 = \sco_{X^{E\setminus n}}$ when $n$ is a coloop, and $0$ otherwise.
        \item $R^{i} f_\ast \scn_1^\vee = 0$ for $i\ge 0$.
        \item $R^{i} f_\ast \big(\bigwedge^d \scl_1\otimes \scq_L \big) = 0$ for all $i> 0$ and $d\ge 0$.
    \end{enumerate}
\end{lemma}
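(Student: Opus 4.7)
The plan is to apply the theorem of cohomology and base change throughout, exploiting that $f$ is flat and proper (Proposition/Definition~\ref{propdet:deletion}) with fibres $C = f^{-1}(t\,p_\scf) = \bigcup_i C(t,i)$ that are chains of rational curves. Statements (i) and (iii) I would prove fibrewise by running Mayer--Vietoris on the chain and invoking the $\P^1$-level vanishing from Lemma~\ref{lem:fibreVanishing}; statements (ii) and (iv) I would instead deduce from $Rf_\ast$ applied to the defining sequence of $\scl_1$, using (i) and the projection formula.

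For statement (i), I start from the Mayer--Vietoris sequence
\[
    0 \to \bigwedge^d \sce_L|_C \to \bigoplus_i \bigwedge^d \sce_L|_{C(t,i)} \to \bigoplus_{p} \bigwedge^d \sce_L|_p \to 0
\]
on each fibre, where $p$ ranges over the nodes. By Lemma~\ref{prop:constFibre}(i), the restriction to each component agrees with $\bigwedge^d \sce'_{L_{\scf(i)}}$, so Lemma~\ref{lem:fibreVanishing}(i) kills its $H^1$. To extract $H^1(C, \bigwedge^d \sce_L|_C) = 0$ from the long exact sequence, I need surjectivity of the node evaluation map; this follows from Lemma~\ref{prop:constFibre}(ii), which says at most one component carries a nontrivial restriction, so every node is adjacent to a component on which $\sce_L$ is trivial and whose global sections surject onto the fibre at that node. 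Constancy of the Euler characteristic along the flat family then forces $h^0$ to be locally constant, matching the dimension computed at the generic $\P^1$-fibre in Lemma~\ref{lem:fibreVanishing}(i); cohomology and base change then yields (i), and the \emph{in particular} clause follows from the decomposition $\bigwedge^d \sce_L = \bigoplus_{a+b=d} \bigwedge^a \scs_L \otimes \bigwedge^b \scq_L$. For statement (iii), the same machinery applies to $\scn_1^\vee|_C$: outside the loop/coloop cases (where $\scn_1 = 0$, so nothing to prove), $\scn_1$ is a line bundle whose restriction to each component is $\scl_Q$ for the minor $L_{\scf(i)}$ (Lemma~\ref{eq:eur-lemma}), so $\scn_1^\vee|_C$ is $\sco(-1)$ on the unique special component and trivial elsewhere; a direct computation of the node evaluation map shows it is an isomorphism, killing both $h^0$ and $h^1$ on every fibre.

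For statements (ii) and (iv), I would avoid further fibrewise analysis and work directly with $Rf_\ast$ applied to (a tensor of) the defining sequence $0 \to f^\ast \scs_{L/n} \to \scs_L \to \scl_1 \to 0$. Since the fibres of $f$ are connected nodal trees of $\P^1$'s, one has $f_\ast \sco_{X_E} = \sco_{X_{E\setminus n}}$ and $R^{\geq 1} f_\ast \sco_{X_E} = 0$, so the projection formula gives $R^{\geq 1} f_\ast f^\ast \scs_{L/n} = 0$; combined with (i) applied to $\scs_L$, the long exact sequence delivers $R^{\geq 1} f_\ast \scl_1 = 0$ and a short exact sequence $0 \to \scs_{L/n} \to f_\ast \scs_L \to f_\ast \scl_1 \to 0$. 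The identification of $f_\ast \scl_1$ in (ii) splits into cases: if $n$ is a loop, $\scl_1 = 0$; if $n$ is a coloop, $\scl_1 \simeq \sco_{X_E}^{\{n\}}$ and so $f_\ast \scl_1 = \sco_{X_{E\setminus n}}$; otherwise a fibrewise Mayer--Vietoris argument parallel to that of (iii) shows $H^0(C, \scl_1|_C) = 0$, so $f_\ast \scl_1 = 0$. For (iv), $\scl_1$ has rank at most one, so only $d = 0, 1$ are nontrivial: $d = 0$ is part of (i), and for $d = 1$ I tensor the defining sequence by the locally free sheaf $\scq_L$ and apply $Rf_\ast$, using (i) for both $\scq_L$ and $\scs_L \otimes \scq_L$ to conclude $R^{\geq 1} f_\ast (\scl_1 \otimes \scq_L) = 0$. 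The main obstacle I anticipate is the Mayer--Vietoris bookkeeping on the chain of $\P^1$'s: proving that the node evaluation map has the expected image -- surjective in (i), an isomorphism in (ii) and (iii) -- requires cascading the vanishing from the unique special component along the chain, a step that is clean to state but the only place that genuinely uses the chain structure of the fibres.
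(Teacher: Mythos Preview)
Your proposal is correct. For (i) and (iii) it is essentially the paper's argument: the normalisation sequence on the chain fibre, component-wise vanishing from Lemma~\ref{lem:fibreVanishing} and Lemma~\ref{prop:constFibre}, then cohomology and base change. The paper phrases the $h^1(C)=0$ step in (i) via an Euler-characteristic count together with injectivity of restriction to the unique special component, whereas you argue surjectivity of the node-evaluation map directly; the two are equivalent, and yours is arguably the more transparent formulation.

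For (ii) and (iv) you take a different route. The paper proves both entirely fibrewise, identifying $\scl_1|_C$ (resp.\ $\scl_1\otimes\bigwedge^d\scq_L|_C$) on each component and rerunning the chain computation, invoking Lemma~\ref{lem:fibreVanishing}(ii) for (iv). You instead push forward the defining sequence $0\to f^\ast\scs_{L/n}\to\scs_L\to\scl_1\to 0$ (tensored with $\scq_L$ for (iv)) and reduce the higher vanishing directly to (i) via the projection formula and $Rf_\ast\sco_{X_E}=\sco_{X_{E\setminus n}}$. This is a cleaner reduction and avoids repeating the chain bookkeeping; the paper's fibrewise approach, on the other hand, is self-contained and does not rely on having already established (i). One caution on (iv): you have read the statement literally as $\bigwedge^d\scl_1\otimes\scq_L$, which is trivial for $d\ge 2$, but what the paper actually needs and proves (see its use in controlling $R^if_\ast(\scl_1\otimes\bigwedge^{d-1}\scg)$ and the role of Lemma~\ref{lem:fibreVanishing}(ii)) is $R^if_\ast(\scl_1\otimes\bigwedge^d\scq_L)=0$. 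Your method handles this intended version with no change: tensor the defining sequence by $\bigwedge^d\scq_L$ and note that both $\scs_L\otimes\bigwedge^d\scq_L$ and $f^\ast\scs_{L/n}\otimes\bigwedge^d\scq_L$ have vanishing $R^{\ge 1}f_\ast$ by (i) and the projection formula.
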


\begin{proof}
    \underline{Statement~(i).}
    Taking into account the relative dimension of $f$, it is enough to show $R^1 f_\ast \bigwedge^d \sce_L = 0$.
    Without loss of generality, we can assume the fibre has the form $C=f^{-1}(p_\scf)$ for some ordered
    partition $\scf$. By Statement~(ii) of Lemma~\ref{prop:constFibre}, the restriction $\scs_L|_C$ is nontrivial on
    at most one component. But $\scq_L$ is trivial if and only if $\scs_L$ is, so $\sce_L$
    is also nontrivial on at most one component, say, $\P^1_{\scf(k)}$.
    \cite[Theorem~2.5]{bestCohomology} says $\sce'_{L_{\scf(k)}}$ is the restriction $\sce_L$
    to $\P^1_{\scf(k)}$. As a matter of notation, we set $L' = L_{\scf(k)}$.

    Taking a normalisation of the nodal curve $C$, one obtains a short exact sequence \[
        0\to \sco_C \to \bigoplus_i \nu_*\sco_{\P^1_{\scf(i)}}  \to \bigoplus_i \sco^\nu_{p_i}/\sco_{p_i}\to 0.
    \] Twisting by $\bigwedge^d \sce_L$ and taking cohomology yields the short exact sequence
    \begin{align*}
        0&\to H^0(C, \bigwedge^d \sce_L) \to H^0(\P^1_{\scf(k)}, \bigwedge^d \sce_{L'}) \oplus
        W \to  \bigoplus_{j}
        \bigwedge^d
        \sce_L|_{p_{\scf^j}} \\
        &\to H^1(C, \bigwedge^d \sce_{L}) \to H^1(\P^1_{\scf(k)},\bigwedge^d \sce_{L'}) =0,\quad\text{where
        } W = \bigoplus_{i\ne k} H^0(\P^1_{\scf(i)}, \bigwedge^d \sce_L|_{\P^1_{\scf(i)}}).
    \end{align*} Here the last vanishing uses the Statement~(i) of
    Lemma~\ref{lem:fibreVanishing}.  Since $\sce_L$ is trivial along components $\scf(i)$
    for $i \neq k$, the third term and $W$ have the same dimension, say, $e$.
    Additivity of Euler characteristics yields $\chi(C,\bigwedge^d \sce_L)+e=
    h^0(\P^1_{\scf(k)}, \bigwedge^d \sce_L) + e$. This gives $h^1(C, \bigwedge^d \sce_L) = h^0(C,
    \bigwedge^d \sce_L) - h^0(\P^1_{\scf(k)}, \bigwedge^d \sce_L)\ge 0$. On the other hand, if a section
    $s\in H^0(C,\bigwedge^d\sce_L)$ is such that the restriction $s|_{\P^1_{\scf(k)}}$ vanishes, then by
    evaluating $s$ at points $p_{\scf^k}$ and $p_{\scf^{k+1}}$, we see that $s$ vanishes everywhere on $C$; therefore $h^0(C, \bigwedge^d \sce_L)$ is at most $h^0(\P^1_{\scf(k)}, \bigwedge^d \sce_L)$. Therefore the
    previous inequality is an equality, so that we get $h^1(C, \bigwedge^d \sce_L) = 0$, and \[
        h^0(C, \bigwedge^d \sce_L) = \dim \bigwedge^d [L'/n \oplus \kk^{E\setminus n}/(L'/n) \oplus \kk].
    \]
    Combining the second assertion with the observation $\dim L' = \dim L$, one deduces that the dimension of $H^0(f^{-1}(p),\bigwedge^d \sce_L)$ is independent of the point $p \in X_{E\setminus n}$. By cohomology and base change the two assertions follow.

    \underline{Statement~(ii).}
    We have that $\scl_1$ equals the trivial bundle $\sco_{X_E}$ when $n$ is a coloop and $0$ when $n$ is a loop. Thus the assertion follows readily in these cases. Henceforth assume $n$ is not a loop or coloop.
    As before, consider a point $p_\scf$ for an ordered set partition $\scf$ with $\ell$ parts, and $C = f^{-1}(p_\scf)$ be the corresponding fibre. The restriction $f^\ast \scs_{L/n}$ is visibly trivial along the whole fibre $C$. By Lemma~\ref{prop:constFibre}, the restriction $\scs_L|_C$ is nontrivial on exactly one component of $C$, say, $\P^1_{\scf(k)}$. Restricting to $\P^1_{\scf(k)}$ and applying Lemma~\ref{prop:constFibre} to the first and second terms of the column of Diagram~\eqref{eq:diagram-upstairs}, we get a short exact sequence \[
        0\to \scs'_{L'/n\oplus 0} \to \scs'_{L'} \to \scl_1|_{\P^1_{\scf(k)}} \to 0,\quad\text{where } L' = L_{\scf(k)}
    \]
    Looking at the first exact sequence in Lemma~\ref{eq:eur-lemma}, we see that $ \scl_1|_{\P^1_{\scf(k)}} \simeq \scl_S \simeq \sco_{\P^1_{\scf(k)}}(-1)$, where the second equality is because $n$ is not a loop or coloop. In particular,  the line bundle $\scl_1$ restricted to ${\P^1_{\scf(k)}}$ has no cohomology in any degree. For $j\neq k$, we have $\scl_1|_{\P^1_{\scf(j)}} \simeq \sco_{\P^1_{\scf(j)}}$, thanks again to Lemma~\ref{prop:constFibre}.

    Twisting the normalisation exact sequence of $C$ by $\scl_1$ and taking cohomology, we have an exact sequence
    \[
        0\to H^0(C, \scl_1) \to H^0(\P^1_{\scf(k)}, \sco_{\P^1_{\scf(k)}}(-1)) \oplus W_\scl \to \bigoplus_{1< j < \ell} \kappa(p_{\scf^j}) \to H^1(C,\scl_1) \to 0
    \]
    where we denote $W_\scl = \bigoplus_{i\ne k} H^0(\P^1_{\scf(i)}, \bigwedge^d \sce_L) \simeq \bigoplus_{i\ne k} \kk$. The map $W_\scf \to \bigoplus_{1<j<\ell} \kappa(p_{\scf^j})$ is an isomorphism for the same reason as before. Therefore, $H^0(C,\scl_1) = H^1(C,\scl_1) =0$. By cohomology and base change, we have that $f_\ast \scl_1 = 0$.

    \underline{Statement~(iii).} Turning to the direct image of $\scn_1$, we make the following adjustments to the proof of Statement~(ii). When $n$ is a loop or coloop, we have that $\scn_1 = 0$, since $L \to L\setminus n \oplus L | n$ is an isomorphism. When $n$ is not a loop or coloop, by the same argument as in Statement~(ii), we observe that Sequence~\eqref{eq:diagram-for-Qvee} restricts to the $\P^1$-component $\P^1_{\scf(k)}$ of the fibre $C$ as \[
        0\to \scq_{L'\setminus n\oplus L'|n}'^\vee \to \scq_{L'
        }'^\vee \to  \scn_1^\vee|_{\P^1_{\scf(j)}} \simeq \scl_Q^\vee \to 0,\quad\text{where } L' = L_{\scf(k)}
    \] which is dual to the the second short exact sequence in Lemma~\ref{eq:eur-lemma}. By Lemma~\ref{prop:constFibre}, $\scn_1^\vee$ restricts to the trivial bundle on all components $\P^1_{\scf(j)}$ for $j\neq k$ and $\sco(-1)$ on the component $\P^1_{\scf(k)}$. This puts us in the same situation as the final paragraph of the argument for Statement~(ii).

    \underline{Statement~(iv).} The argument for Statement~(i) can be applied directly to this case. Except that we quote Statement~(ii) of Lemma~\ref{lem:fibreVanishing} instead of Statement~(i).
\end{proof}

\subsection{Running the induction}\label{subsec:final-computation}
We first give deletion-contraction formulae for the direct images of exterior powers of $\scs_L$ and $\scq_L^\vee$. The formula involving $\scs_L$ also appeared as \cite[Theorem~1.5]{bestCohomology}, but with a different proof.

\begin{lemma}\label{lem:pushS}
    Let $L\subseteq \kk^E$ be a linear subspace. Then
    \begin{eqnarray*}
        &(i). &   f_\ast \bigwedge^d \scs_L \simeq
        \begin{cases}
            \bigwedge^d (\scs_{L/n} \oplus \sco_{X_{E\setminus n}}), & \text{if $n$ is a coloop;} \\
            \bigwedge^d \scs_{L/n}, & \text{if $n$ is not a coloop.}
        \end{cases} \\
        & (ii). & f_\ast \bigwedge^d \scq_L^\vee \simeq
        \begin{cases}
            \bigwedge^d (\scq_{L\setminus n}^\vee \oplus \sco_{X_{E\setminus n}}), & \text{if $n$ is a loop;} \\
            \bigwedge^d  \scq_{L\setminus n}^\vee, & \text{if $n$ is not a loop.}
        \end{cases}
    \end{eqnarray*}
\end{lemma}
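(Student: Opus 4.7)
The plan is to exploit two short exact sequences of vector bundles already at our disposal, namely the leftmost column of Diagram~\eqref{eq:diagram-upstairs}
\begin{equation*}
 0\to f^\ast \scs_{L/n}\xrightarrow{\eps_1} \scs_L \to \scl_1 \to 0
\end{equation*}
for part (i) and Sequence~\eqref{eq:diagram-for-Qvee}
\begin{equation*}
 0\to \scq^\vee_{L\setminus n\oplus L\mid n}\to \scq^\vee_L\to \scn_1^\vee \to 0
\end{equation*}
for part (ii). The crucial observation is that $\scl_1$ and $\scn_1^\vee$ each have rank at most one, so the induced filtrations on the exterior powers are only two terms long; for instance
\begin{equation*}
 0\to \bigwedge^d f^\ast \scs_{L/n}\to \bigwedge^d \scs_L \to \bigwedge^{d-1} f^\ast \scs_{L/n}\otimes \scl_1 \to 0,
\end{equation*}
and analogously for $\bigwedge^d \scq_L^\vee$. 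The strategy is then to push both sequences forward along $f$ and feed in the higher direct image vanishings already harvested in Lemma~\ref{lem:derivedVanshings}.

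For (i) in the case where $n$ is not a coloop, Lemma~\ref{lem:derivedVanshings}(ii) gives $f_\ast \scl_1 = 0$, so the third term of the filtration pushes to zero by the projection formula; the potential connecting map lands in $R^1 f_\ast \bigwedge^d f^\ast \scs_{L/n} \simeq \bigwedge^d \scs_{L/n}\otimes R^1 f_\ast \sco_{X_E}$, which vanishes because $R^1 f_\ast \sco_{X_E}=0$ by specialising Lemma~\ref{lem:derivedVanshings}(i) to $d=0$. Hence $f_\ast \bigwedge^d \scs_L\simeq \bigwedge^d \scs_{L/n}$. In the coloop case I would bypass the filtration altogether: the decomposition $L = L/n\oplus \kk^{\{n\}}$ lifts to a $T$-equivariant splitting $\scs_L\simeq f^\ast\scs_{L/n}\oplus\sco_{X_E}^{\{n\}}$ already on $X_E$, whose exterior powers and direct images assemble into $\bigwedge^d(\scs_{L/n}\oplus\sco_{X_{E\setminus n}})$.

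For (ii) the argument runs in parallel. Both when $n$ is a coloop and when it is neither a loop nor a coloop one has $L\mid n = \kk^{\{n\}}$, which yields $\kk^E/(L\setminus n\oplus L\mid n) = \kk^{E\setminus n}/(L\setminus n)$ and hence $\scq_{L\setminus n\oplus L\mid n} = f^\ast \scq_{L\setminus n}$. Sequence~\eqref{eq:diagram-for-Qvee} thus reads $0\to f^\ast\scq_{L\setminus n}^\vee \to \scq_L^\vee \to \scn_1^\vee\to 0$, and pushing forward the induced filtration of $\bigwedge^d \scq_L^\vee$ using $f_\ast \scn_1^\vee = 0$ from Lemma~\ref{lem:derivedVanshings}(iii), together with $R^1 f_\ast \sco_{X_E}=0$, yields $f_\ast \bigwedge^d \scq_L^\vee \simeq \bigwedge^d \scq_{L\setminus n}^\vee$. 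The loop case I would handle directly: $L\subseteq \kk^{E\setminus n}\times 0$ gives the $T$-equivariant splitting $\scq_L = f^\ast\scq_{L\setminus n}\oplus \sco_{X_E}^{\{n\}}$, whose dual already has the claimed exterior power decomposition.

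I do not anticipate a serious obstacle: once the vanishings $f_\ast \scl_1 = 0$ and $f_\ast \scn_1^\vee = 0$ are in hand, the two-term nature of the filtrations forced by the rank bound makes the pushforward computation direct, and the coloop/loop edge cases become the trivial ones because the relevant ambient short exact sequences split already on $X_E$. The instructive contrast is with $\bigwedge^d \scq_L$ itself, where the analogous cokernel $\scn_1$ lies on the \emph{quotient} side of the relevant sequence and its pushforward need not vanish; this is precisely the subtlety underlying Remark~\ref{rm:eur-gap}.
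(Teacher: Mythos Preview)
Your proof is correct and follows essentially the same approach as the paper: both use the two-step filtrations on $\bigwedge^d \scs_L$ and $\bigwedge^d \scq_L^\vee$ induced by the rank-one quotients $\scl_1$ and $\scn_1^\vee$, push forward along $f$, and invoke $f_\ast \scl_1 = 0$ (resp.\ $f_\ast \scn_1^\vee = 0$) from Lemma~\ref{lem:derivedVanshings}, while disposing of the coloop/loop edge cases via the explicit splitting of $\scs_L$ (resp.\ $\scq_L$) on $X_E$. Your remark about the connecting map into $R^1 f_\ast \bigwedge^d f^\ast \scs_{L/n}$ is harmless but superfluous, since once the third term of the pushforward sequence vanishes the isomorphism $f_\ast \bigwedge^d f^\ast \scs_{L/n} \simeq f_\ast \bigwedge^d \scs_L$ is automatic.
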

\begin{proof}
    \underline{Statement~$(i)$.} When $n$ is a loop, we have an isomorphism $\scs_L = \scs_{L/n\oplus 0} \simeq f^\ast \scs_{L/ n}$. Therefore the assertion follows from the projection formula and the fact that $f_\ast \sco_X = \sco_{X\setminus n}$. When $n$ is a coloop, we have that $\scl_1 = \sco_{X_E}$. Moreover, that $n$ is a coloop implies the existence of a vector $v\in L \cap \kk^{\{n\}}$ gives a section $v\colon \sco_{X_E} \to \scs_L$. This gives an identification $\scs_L \simeq f^\ast \scs_{L\setminus n} \oplus \sco_{X_E}^{\{n\}} \simeq f^\ast \scs_{L/ n} \oplus \sco_{X_E}^{\{n\}}$, where we note the isomorphism $L/n\simeq L\setminus n$ when $n$ is a coloop.

    Assume from now on $n$ is not a loop or coloop. Considering the pushforward of the exterior power of the first column of Diagram~\eqref{eq:diagram-upstairs} along $f$, we obtain an exact sequence
    \begin{equation*}
        0\to \bigwedge^d S_{L/n} \xrightarrow{f_\ast \bigwedge^d \epsilon_1} \bigwedge^d f_\ast \scs_L \to f_\ast (\scl_1 \otimes \bigwedge^{d-1} f^\ast \scs_{L/n}) \simeq f_\ast \scl_1 \otimes \bigwedge^{d-1}\scs_L,
    \end{equation*}
    where we have used the projection formula in the first and third term.
    But by Statement~(ii) of Lemma~\ref{lem:derivedVanshings}, the rightmost term in the sequence above vanishes, and the claimed isomorphism is given by $f_\ast \bigwedge^d \epsilon_1$.

    \underline{Statement~$(ii)$.}
    When $n$ is a loop or coloop, the assertion is clear from the identity $\scq_L \simeq \scq_{L\setminus n \oplus L | n}$. The latter equals $f^\ast(\scq_{L\setminus n}\oplus \sco_{X_{E\setminus n}})$ when $n$ is a loop, and $f^\ast \scq_{L\setminus n}\simeq f^\ast \scq_{L/n}$ when $n$ is a coloop. Thus the assertion is clear from the projection formula.

    When $n$ is not a loop or coloop, we consider pushing forward Sequence~\eqref{eq:diagram-for-Qvee} along $f$ and obtain\[
        0\to \bigwedge^d \scq_{L\setminus n}^\vee \to f_\ast \bigwedge^d \scq_L^\vee \to f_\ast \scn_1^\vee \otimes \bigwedge^{d-1}\scq_L^\vee.
    \]
    The last factor vanishes by Statement~(iii) of Lemma~\ref{lem:derivedVanshings}. The claim follows.
\end{proof}
\begin{remark}
    Here, the case for $\scq_L^\vee$ cannot be deduced from applying the standard Cremona pullback with cohomology and base change under consideration. This is because the commuting diagram
    \[
        \begin{tikzcd}
            {X_E} & X_E \\
            {X_{E\setminus n}} & {X_{E\setminus n}}
            \arrow["\crem",from=1-1, to=1-2]
            \arrow["f",from=1-1, to=2-1]
            \arrow["f",from=1-2, to=2-2]
            \arrow["\crem",from=2-1, to=2-2]
    \end{tikzcd}\]
    is \textit{not} Cartesian. This can be observed from the big open tori.
\end{remark}

Chopping the four-term sequence~\eqref{eq:four-term-upstairs}, we arrive at two short exact sequences
\begin{align*}
    0\to \scl_1 \to f^\ast (\sce_{L/n} \oplus \sco_{X^{E\setminus n}}) \xrightarrow{\eps_1\oplus \eps_2} \scg \to 0,\quad \text{and}\\
    0 \to \scg \to
    \sce_L \to  \scl_1 \to 0.
\end{align*}
Taking exterior powers gives two short exact sequences
\begin{align}
    0\to  \bigwedge^d \scg \to    \bigwedge^d \sce_L \xrightarrow{\bigwedge^d(\eps_1\oplus \eps_2)} \scl_1 \otimes \bigwedge^{d-1} \scg \to 0,\;\text{and}\tag{*}\label{eq:*} \\
    0\to \scl_1\otimes \bigwedge^{d-1} \scg \to f^\ast \bigwedge^d (\sce_{L/n} \oplus \sco_{X^{E\setminus n}}) \to \bigwedge^d \scg \to 0.\tag{**}\label{eq:**}
\end{align}

We single out some useful pieces of computation, which will be used more than once in the proof of Theorem~\ref{thm:SQvanishing}.

\begin{lemma}\label{lem:pushforward-L1-wedgeQ}
    In the case when $n$ is not a loop, we have the following isomorphisms for $i\ge 0$, \[
        f_\ast (\scl_1\otimes \bigwedge^i \scq_L) = \det \scq_{L/n} \otimes \bigwedge^{n+1-r-i}\scq_{L\setminus n}^\vee \simeq \det \scq_{L/n} \otimes \det \scq_{L\setminus n}^\vee \otimes \bigwedge^{i-1}\scq_{L\setminus n}.
    \]
\end{lemma}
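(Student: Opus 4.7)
The plan is to combine a determinant identity coming from Diagram~\eqref{eq:diagram-upstairs} with the duality isomorphism $\bigwedge^j V \simeq \bigwedge^{\on{rk} V - j} V^\vee \otimes \det V$, and then apply the projection formula together with Statement~(ii) of Lemma~\ref{lem:pushS}.

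First, I would extract the right-hand column of Diagram~\eqref{eq:diagram-upstairs}, namely the short exact sequence
\begin{equation*}
    0 \to \scl_1 \to f^\ast \scq_{L/n} \oplus \sco_{X_E}^{\{n\}} \to \scq_L \to 0,
\end{equation*}
and take determinants. Since $\scl_1$ is a line bundle and the middle term has determinant $f^\ast \det \scq_{L/n}$, one obtains the key identity
\begin{equation*}
    \scl_1 \otimes \det \scq_L \simeq f^\ast \det \scq_{L/n}.
\end{equation*}

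Next, since $\scq_L$ has rank $n+1-r$, the usual duality isomorphism gives $\bigwedge^i \scq_L \simeq \bigwedge^{n+1-r-i} \scq_L^\vee \otimes \det \scq_L$. Tensoring with $\scl_1$ and substituting the determinant identity yields
\begin{equation*}
    \scl_1 \otimes \bigwedge^i \scq_L \simeq f^\ast \det \scq_{L/n} \otimes \bigwedge^{n+1-r-i}\scq_L^\vee.
\end{equation*}
Applying $f_\ast$ and the projection formula (noting that $\det \scq_{L/n}$ is pulled back from $X_{E\setminus n}$) reduces the computation to $f_\ast \bigwedge^{n+1-r-i}\scq_L^\vee$. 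Here the hypothesis that $n$ is not a loop lets me invoke Statement~(ii) of Lemma~\ref{lem:pushS} to identify this pushforward with $\bigwedge^{n+1-r-i}\scq_{L\setminus n}^\vee$, which establishes the first asserted equality. The second isomorphism then follows from a second application of the duality isomorphism, now to $\scq_{L\setminus n}$, whose rank is $n-r$ (using implicitly that $n$ is not a coloop, so that $\dim L\setminus n = r$).

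I do not anticipate a genuine obstacle: the argument is a bookkeeping exercise, and the only subtlety is to locate the determinant identity $\scl_1 \otimes \det \scq_L = f^\ast \det \scq_{L/n}$ inside Diagram~\eqref{eq:diagram-upstairs}. One should also briefly verify that both sides degenerate correctly in the edge cases (for instance $i=0$, where the first formula becomes $\bigwedge^{n+1-r}\scq_{L\setminus n}^\vee = 0$ by rank reasons, matching $f_\ast \scl_1 = 0$ from Statement~(ii) of Lemma~\ref{lem:derivedVanshings}).
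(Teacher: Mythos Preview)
Your proposal is correct and follows essentially the same route as the paper: both extract the determinant identity $\scl_1 \simeq \det\scq_L^\vee \otimes f^\ast\det\scq_{L/n}$ from Diagram~\eqref{eq:diagram-upstairs}, rewrite $\scl_1\otimes\bigwedge^i\scq_L$ via the duality $\bigwedge^i\scq_L \simeq \bigwedge^{n+1-r-i}\scq_L^\vee\otimes\det\scq_L$, and finish with the projection formula and Statement~(ii) of Lemma~\ref{lem:pushS}. Your remark that the second displayed isomorphism relies on $\operatorname{rk}\scq_{L\setminus n}=n-r$ (hence on $n$ not being a coloop) is a useful caveat that the paper leaves implicit.
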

\begin{proof}
    When $n$ is not a loop, $\scl_1$ is a line bundle and we have the identity $\scl_1 \simeq \det \scs_L \otimes f^\ast \det \scs_{L/n}^\vee \simeq \det \scq_L^\vee \otimes f^\ast \det \scq_{L/ n}$ by taking determinants of the first and third columns of Diagram~\eqref{eq:diagram-upstairs}.
    For $i\ge 0$, we can write
    \begin{align*}
        f_\ast (\scl_1 \otimes \bigwedge^i \scq_L) & \simeq f_\ast ( \bigwedge^{n+1-r-i}\scq_L^\vee \otimes f^\ast \det \scq_{L/n}) \\
        & \simeq \det \scq_{L/n} \otimes f_\ast \bigwedge^{n+1-r-i}\scq_L^\vee  \simeq \det \scq_{L/n} \otimes \bigwedge^{n+1-r-i}\scq_{L\setminus n}^\vee,
    \end{align*} where the second isomorphism comes from the fact that for a rank-$e$ vector bundle $\sce$, one has the identity $\bigwedge^d \sce \otimes \det \sce^\vee \simeq \bigwedge^{e-d} \sce^\vee$; moreover, the last isomorphism comes from Statement~(ii) of Lemma~\ref{lem:pushS}.
\end{proof}
This Lemma is already enough for one to prove by induction the higher cohomology vanishing $\bigwedge^d \scq_L,\,d\ge 0$; see Appendix~\ref{appendix:exterior-quotient}. For Theorem~\ref{thm:SQvanishing}, we additionally need the following push-pull computation.
\begin{corollary}\label{cor:pushforward-L1-wedgeG}
    When $n$ is not a loop, we have \[
        f_\ast (\scl_1 \otimes \bigwedge^d \scg) = \bigoplus_{a+b=d}  \bigwedge^{n+1-r-a}\scq_{L\setminus n}^\vee \otimes  \bigwedge^{r-1-b}\scs_{L/n}^\vee
    \]
\end{corollary}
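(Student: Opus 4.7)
The plan is to reduce to Lemma~\ref{lem:pushforward-L1-wedgeQ} by splitting the exterior power of $\scg$ into pieces indexed by the direct-sum decomposition, applying the projection formula, and then absorbing the leftover determinant twist using standard exterior-algebra duality.

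First, I recall from the discussion just before the corollary that $\scg \simeq \scq_L \oplus f^\ast \scs_{L/n}$. Since $\bigwedge^d$ distributes over direct sums, I get
\begin{equation*}
    \bigwedge^d \scg \;\simeq\; \bigoplus_{a+b=d} \bigwedge^a \scq_L \otimes f^\ast \bigwedge^b \scs_{L/n}.
\end{equation*}
Tensoring with $\scl_1$ and pushing forward along $f$, the projection formula lets me pull the pulled-back factor $\bigwedge^b \scs_{L/n}$ out of the direct image, yielding
\begin{equation*}
    f_\ast\bigl(\scl_1 \otimes \bigwedge^d \scg\bigr) \;\simeq\; \bigoplus_{a+b=d} f_\ast\bigl(\scl_1 \otimes \bigwedge^a \scq_L\bigr) \otimes \bigwedge^b \scs_{L/n}.
\end{equation*}

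Next, I invoke Lemma~\ref{lem:pushforward-L1-wedgeQ} to replace each inner pushforward by $\det \scq_{L/n} \otimes \bigwedge^{n+1-r-a}\scq_{L\setminus n}^\vee$. At this point the only discrepancy with the target formula is that we have $\det \scq_{L/n} \otimes \bigwedge^b \scs_{L/n}$ in the summand, instead of $\bigwedge^{r-1-b}\scs_{L/n}^\vee$. To identify these, I first use the Euler sequence $0\to \scs_{L/n}\to \sco_{X_{E\setminus n}}^{E\setminus n}\to \scq_{L/n}\to 0$ on $X_{E\setminus n}$: taking determinants gives $\det \scq_{L/n} \simeq \det \scs_{L/n}^\vee$. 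Since $n$ is not a loop, $\dim L/n = r-1$, so $\scs_{L/n}$ is a rank-$(r-1)$ bundle, and the standard identity $\bigwedge^b \sce \otimes \det \sce^\vee \simeq \bigwedge^{\mathrm{rk}(\sce)-b}\sce^\vee$ then yields
\begin{equation*}
    \det \scq_{L/n} \otimes \bigwedge^b \scs_{L/n} \;\simeq\; \det \scs_{L/n}^\vee \otimes \bigwedge^b \scs_{L/n} \;\simeq\; \bigwedge^{r-1-b}\scs_{L/n}^\vee.
\end{equation*}
Substituting this into each summand gives the claimed formula.

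There is essentially no hard step: the whole computation is a combination of the projection formula, Lemma~\ref{lem:pushforward-L1-wedgeQ}, and exterior-algebra duality. The one point one has to check carefully is the rank bookkeeping, namely that $\dim(L/n)=r-1$ under our assumption that $n$ is not a loop (this is the only place the hypothesis is used, beyond its role in Lemma~\ref{lem:pushforward-L1-wedgeQ} itself). The coloop case is handled uniformly since the identities $\det \scq_{L/n} \simeq \det \scs_{L/n}^\vee$ and the duality for rank-$(r-1)$ bundles remain valid with the convention that $\bigwedge^b$ vanishes outside the allowed range.
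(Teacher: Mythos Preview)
Your argument is correct and follows essentially the same route as the paper's own proof: split $\bigwedge^d\scg$ using $\scg\simeq\scq_L\oplus f^\ast\scs_{L/n}$, apply the projection formula, invoke Lemma~\ref{lem:pushforward-L1-wedgeQ}, and absorb the $\det\scq_{L/n}$ twist via $\det\scq_{L/n}\simeq\det\scs_{L/n}^\vee$ together with the exterior-algebra duality on the rank-$(r-1)$ bundle $\scs_{L/n}$. The paper compresses the last two steps into a single line, whereas you spell them out explicitly and add the rank bookkeeping; otherwise the proofs are identical.
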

\begin{proof}
    By Lemma~\ref{lem:pushforward-L1-wedgeQ}, we have
    \begin{align*}
        f_\ast (\scl_1\otimes \bigwedge^d \scg) & \simeq \bigoplus_{a+b=d} f_\ast (\scl_1 \otimes \bigwedge^a \scq_L \otimes f^\ast \bigwedge^b \scs_{L/ n}) \\
        & \simeq \bigoplus_{a+b=d} f_\ast (\scl_1 \otimes \bigwedge^a \scq_L) \otimes \bigwedge^b \scs_{L/ n}\simeq \bigoplus_{a+b=d} \bigwedge^{n+1-r-a} \scq_{L\setminus n}^\vee \otimes \bigwedge^{r-1-b} \scs_{L/n}^\vee,
    \end{align*}
    which is the claimed identity.
\end{proof}

\begin{proof}[Proof of Theorem~\ref{thm:SQvanishing}]
    We proceed by induction on the cardinality of the ground set $E$. The base case is trivial, so we assume $n\ge 1$.
    We run the following Leray spectral sequence \[
        E_2^{p,q} = H^p(X_{E\setminus n},R^q f_* \bigwedge^d \sce_{L}) \Rightarrow
        H^{p+q}(X_E, \bigwedge^d \sce_{L}).
    \]
    By Statement~(i) of Lemma~\ref{lem:derivedVanshings}, we know that $\bigwedge^d \sce_L$ has vanishing higher direct images. This gives an isomorphism
    $E^{p,0}_2 \simeq H^{p}(X_E, \bigwedge^d \sce_{L})$. Therefore, it suffices to show $f_\ast \bigwedge^d \sce_L$ has vanishing higher cohomology.

    \underline{Case $n$ is a loop.} Here, we have $\scl_1 = 0$. Therefore, the map $\eps_1 \oplus \eps_2\colon f^\ast (\sce_{L/n}\oplus \sco_{X_{E\setminus n}})\to \sce_L$ is an isomorphism. By the projection formula, we have \[
        E_2^{p,0}  \simeq H^p(X_{E\setminus n},\bigwedge^d (\sce_{L/n}\oplus \sco_{X_{E\setminus n}})) \simeq  H^p(X_{E\setminus n},\bigwedge^d \sce_{L/n}) \oplus H^p(X_{E\setminus n},\bigwedge^{d-1} \sce_{L/n}) =0
    \] where the last equality is given by the inductive hypothesis. Therefore the vanishing statements follow.

    Henceforth, we will deal with the cases when $n$ is not a loop. Therefore, $\scl_1$ will be a line bundle.
    Pushing forward Sequences \eqref{eq:*} and \eqref{eq:**} along $f$ we obtain
    \begin{align}
        0\to  f_\ast \bigwedge^d \scg \to f_\ast \bigwedge^d \sce_L \to f_\ast (\scl_1\otimes \bigwedge^{d-1} \scg) \to 0,\;\text{and}\tag{$\dagger$}\label{eq:dag} \\
        0\to f_\ast (\scl_1\otimes \bigwedge^{d-1} \scg) \to \bigwedge^d (\sce_{L/n} \oplus \sco_{X^{E\setminus n}}) \to f_\ast \bigwedge^d \scg \to 0.\tag{$\dagger\dagger$}\label{eq:dagdag}
    \end{align}
    Here, we used the fact that $\scl_1\otimes \bigwedge^{d-1} \scg$ and $\bigwedge^d \scg$ have vanishing higher direct images, for projection formula and Statements~(i) and ~(iv) of Lemma~\ref{lem:derivedVanshings} gives for $i>0$,
    \begin{align*}
        R^i f_\ast \bigwedge^{d}\scg & \simeq \bigoplus_{a+b=d} \big(R^i f_\ast  \bigwedge^a \scq_L \big) \otimes \bigwedge^b \scs_{L/ n} = 0,\, \text{ and}\\
        R^i f_\ast (\scl_1 \otimes \bigwedge^{d-1}\scg) & \simeq \bigoplus_{a+b=d-1} R^i f_\ast (\scl_1 \otimes \bigwedge^a \scq_L) \otimes \bigwedge^b \scs_{L/ n} = 0.
    \end{align*}
    The strategy here is to control the cohomology of $f_* (\scl_1\otimes \bigwedge^{d-1} \scg)$. Once this is done, we will inspect the long exact sequence in cohomology for Sequences~\eqref{eq:dag} and \eqref{eq:dagdag} to obtain the desired vanishing statements.

    \underline{Case $n$ is a coloop.} In this situation, deletion and contraction by $n$ is the same. More precisely, the inclusion $L/n \to L\setminus n$ is an isomorphism. Therefore, applying Corollary~\ref{cor:pushforward-L1-wedgeG}, we have that for $i>0$,
    \begin{align*}
        H^i(X_{E\setminus n}, f_* (\scl_1\otimes \bigwedge^{d-1} \scg)) & \simeq \bigoplus_{a+b=d-1} H^i(X_{E\setminus n}, \bigwedge^{n+1-r-a}\scq_{L\setminus n}^\vee \otimes  \bigwedge^{r-1-b}\scs_{L/n}^\vee) \\
        & \simeq \bigoplus_{a+b=d-1} H^i(X_{E\setminus n}, \bigwedge^{n+1-r-a}\scq_{L\setminus n}^\vee \otimes  \bigwedge^{r-1-b}\scs_{L\setminus n}^\vee) \\
        &  \simeq \bigoplus_{a+b=d-1} H^i(X_{E\setminus n}, \bigwedge^{n+1-r-a}\scs_{L^\perp/n} \otimes \bigwedge^{r-1-b}\scq_{L^\perp/ n}) = 0,
    \end{align*}
    where the third isomorphism comes from pulling back by the standard Cremona transform, and the last vanishing comes from the inductive hypothesis.
    Now, taking cohomology of Sequence~\eqref{eq:dagdag} and applying the inductive hypothesis again, we see that $f_\ast \bigwedge^d \scg$ also has zero higher cohomology\[
        H^i(X_{E\setminus n},f_\ast \bigwedge^d \scg)\simeq  H^{i+1}(X_{E\setminus n}, f_\ast (\scl_1\otimes \bigwedge^{d-1}\scg)) = 0,\quad \text{for $i > 0$.}
    \]
    Turning to Sequence~\eqref{eq:dag} and taking cohomology, we get, for $i>0$, an exact sequence
    \[
        H^i(f_\ast \bigwedge^d \scg) \to H^i(f_\ast \bigwedge^d \sce_L ) \to H^i(f_\ast (\scl_1\otimes \bigwedge^{d-1}\scg )).
    \]
    But we have shown that the first and third term vanishes; therefore, the middle term must vanish, giving the assertion.

    \underline{Case $n$ is not a loop or coloop.} Consider the following diagram of short exact sequences of vector bundles in $X_{E\setminus n}$,
    \begin{equation}\label{eq:diagram-downstairs}
        \begin{tikzcd}
            &&& 0 \\
            & 0 && {\scl_2} \\
            0 & {\scs_{L/n}} & {\sco^{E\setminus n}_{X_{E\setminus n}}} & {\scq_{L/n}} & 0 \\
            0 & {\scs_{L\setminus n}} & {\sco^{E\setminus n}_{X_{E\setminus n}}} & {\scq_{L\setminus n}} & 0 \\
            & {\scl_2} && 0 \\
            & 0
            \arrow[from=1-4, to=2-4]
            \arrow[from=2-2, to=3-2]
            \arrow[from=2-4, to=3-4]
            \arrow[from=3-1, to=3-2]
            \arrow[from=3-2, to=3-3]
            \arrow["\delta_1",from=3-2, to=4-2]
            \arrow[from=3-3, to=3-4]
            \arrow[equals, from=3-3, to=4-3]
            \arrow[from=3-4, to=3-5]
            \arrow["\delta_2",from=3-4, to=4-4]
            \arrow[from=4-1, to=4-2]
            \arrow[from=4-2, to=4-3]
            \arrow[from=4-2, to=5-2]
            \arrow[from=4-3, to=4-4]
            \arrow[from=4-4, to=4-5]
            \arrow[from=4-4, to=5-4]
            \arrow[from=5-2, to=6-2]
        \end{tikzcd}
    \end{equation}
    Here, the map $\delta_1$ is induced from the inclusion $L/n\hookrightarrow L\setminus n$. Here, we take \[
        \scl_2  = \coker (\delta_1) = \parbox{9cm}{the $T'$-equivariant quotient of $\scs_{L\setminus n}$ over $X_{E\setminus n}$ \\
        whose fibre at $\ol t\in \P T'$ is $(t^{-1}{L\setminus n})/(t^{-1}L/n)$.}
    \] This is also the kernel of $\delta_2$, thanks again to the snake lemma. Moreover, since $n$ is not a loop or coloop, the inclusion $L/n\to L\setminus n$ is strict of relative dimension $1$, $\scl_2$ is a line bundle.

    Taking the direct sum of first and third row of Diagram~\eqref{eq:diagram-downstairs}, we obtain the $4$-term exact sequence
    \begin{multline*}
        0\to \scl_2 \to \sce_{L/n} \xrightarrow{\delta_1\oplus \delta_2} \sce_{L\setminus n} \to \scl_2 \to 0,\\
        \text{where we set $\sch$ to be $\im (\delta_1\oplus \delta_2) \simeq \scs_{L/ n} \oplus \scq_{L\setminus n}$.}
    \end{multline*}

    Taking exterior powers, we obtain, for any $d\ge 0$, the short exact sequences
    \begin{align}
        0 \to  \bigwedge^d \sch \to \bigwedge^d \sce_{L\setminus n} \to \scl_2\otimes \bigwedge^{d-1} \sch \to 0,\;\text{and}\tag{$\ddagger$}\label{eq:ddag} \\
        0 \to  \scl_2\otimes \bigwedge^{d-1} \sch \to \bigwedge^d \sce_{L/ n} \to  \bigwedge^d \sch \to 0.\tag{$\ddagger\ddagger$}\label{eq:ddagddag}
    \end{align}
    By the inductive hypothesis, the middle terms in Sequences~\eqref{eq:ddag} and \eqref{eq:ddagddag} have no higher cohomology. Taking cohomology of Sequence~\eqref{eq:ddag} and \eqref{eq:ddagddag} in an alternating way, we arrive at two chains of isomorphisms for any $d\ge 0$,
    \begin{align*}
        H^1(X_{E\setminus n},\scl_2\otimes \bigwedge^{d-1} \sch) \simeq H^2(X_{E\setminus n}, \bigwedge^d \sch) \simeq H^3(X_{E\setminus n},\scl_2 \otimes \bigwedge^{d-1} \sch ) \simeq \cdots,\quad\text{and}\\
        H^1(X_{E\setminus n},\bigwedge^d \sch) \simeq H^2(X_{E\setminus n}, \scl_2 \otimes \bigwedge^{d-1} \sch ) \simeq H^3(X_{E\setminus n}, \scl_2\otimes \bigwedge^{d-1} \sch) \simeq \cdots,
    \end{align*}
    both of which must end with $0$ by Grothendieck vanishing. In particular, any exterior power of $\sch$ has no higher cohomology; therefore, we have
    \begin{equation}\label{eq:deletion-contraction-vanishing}
        H^i(X_{E\setminus n}, \bigwedge^a \scs_{L/n} \otimes \bigwedge^b \scq_{L\setminus n}) = 0\quad\text{for any $i> 0$ and $a,b\ge 0$.}
    \end{equation}
    Applying Corollary~\ref{cor:pushforward-L1-wedgeG} and pulling back by the standard Cremona transform, we have that for $i>0$,
    \begin{align*}
        H^i(X_{E\setminus n}, f_* (\scl_1\otimes \bigwedge^{d-1} \scg)) & \simeq \bigoplus_{a+b=d-1} H^i(X_{E\setminus n}, \bigwedge^{n+1-r-a}\scq_{L\setminus n}^\vee \otimes  \bigwedge^{r-1-b}\scs_{L/n}^\vee) \\
        &  \simeq \bigoplus_{a+b=d-1} H^i(X_{E\setminus n}, \bigwedge^{n+1-r-a}\scs_{L^\perp/n} \otimes \bigwedge^{r-1-b}\scq_{L^\perp\setminus n}) = 0,
    \end{align*}
    thanks to Eq.\eqref{eq:deletion-contraction-vanishing}.

    Finally, we can return to Sequences~\eqref{eq:dag} and \eqref{eq:dagdag}. By the inductive hypothesis, we have that the middle terms of Eq.\eqref{eq:dagdag} have no higher cohomology, for all $d\ge 0$. Taking cohomology of Sequence~\eqref{eq:dagdag}, we obtain \[
        H^i(X_{E\setminus n}, f_\ast \bigwedge^d \scg) \simeq H^{i+1}(X_{E\setminus n}, f_\ast (\scl_1\otimes \bigwedge^{d-1} \scg)) = 0\quad\text{for all $i > 0$ and $d\ge 0$.}
    \]
    Now, turning to Sequence~\eqref{eq:dag}. Taking cohomology and noting that the first and third term in the sequence have no higher cohomology, we get that \[
        H^p(X_E,\bigwedge^d \sce_L) = H^p(X_{E\setminus n}, f_\ast \bigwedge^d \sce_L) = 0,
    \] for all $p> 0$ and $d\ge 0$.
\end{proof}

\subsection{Application to Tutte polynomials}
To a rank-$r$ matroid $\rmm$ on the finite set $E$, one can associate its \textit{Tutte polynomial} $T_{\rmm}$~\cite{CrapoTuttePolynomial}. It is defined as \[
    T_{\rmm}(x,y) = \sum_{A \subseteq E} (x-1)^{r-\rk_{\rmm}(A)}(y-1)^{\# A - \rk_{\rmm}(A)}.
\]
When $E$ has cardinality greater than $1$, the Tutte polynomial $T_\rmm$ satisfies the following deletion-contraction property, given $n\in E$,
\begin{equation}\label{eq:tutte-deletion-contraction}
    T_\rmm (x,y) =
    \begin{cases}
        x \,T_{\rmm / n} (x,y) & \text{if $n$ is a coloop in $\rmm$,}\\
        y \,T_{\rmm \setminus n}(x,y) & \text{if $n$ is a loop in $\rmm$,}\\
        T_{\rmm/ n}(x,y) + T_{\rmm \setminus n}(x,y) & \text{if $n$ is neither a loop nor a coloop in $\rmm$.}
    \end{cases}
\end{equation}

Let $\rmm(L)$ be the matroid associated to the linear subspace $L\subseteq \kk^E$ of dimension $r$. As an application of Theorem~\ref{thm:SQvanishing}, we give a formula for the Tutte polynomial for the matroid $\rmm(L)$.
\begin{proposition}\label{prop:tutte-vs-h0}
    Let $L\subseteq \kk^E$ be a linear subspace of dimension $r$ that satisfies Condition~\eqref{eq:ll}. Consider the polynomial \[
        h_L(u,v) = \sum_{p,q} h^0(\bigwedge^p \scs_L \otimes \bigwedge^q \scq_L)\,u^p v^q.
    \] We have \[
        h_L(u,v) = v^{\# E -r} T_{\rmm(L)}(u+1,v^{-1}+1) = \sum_{A \subseteq E} u^{r-\rk_{\rmm}(A)} v^{\# E \setminus A - (r-\rk_{\rmm}(A))}.
    \]
\end{proposition}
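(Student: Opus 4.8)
By Theorem~\ref{thm:SQvanishing} we have $H^{>0}(X_E,\bigwedge^p\scs_L\otimes\bigwedge^q\scq_L)=0$, hence $h^0(\bigwedge^p\scs_L\otimes\bigwedge^q\scq_L)=\chi(X_E,\bigwedge^p\scs_L\otimes\bigwedge^q\scq_L)$ for all $p,q\ge 0$. Writing $\lambda_t(\sce)=\sum_i[\bigwedge^i\sce]\,t^i$ for the total exterior power (multiplicative on short exact sequences) and $\Lambda_L\coloneqq\lambda_u([\scs_L])\cdot\lambda_v([\scq_L])\in K^0(X_E)[u,v]$, this reads $h_L(u,v)=\chi(\Lambda_L)$, where $\chi\colon K^0(X_E)\to\ZZ$ is the sheaf Euler characteristic, extended $\ZZ[u,v]$-linearly. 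Expanding the definition of the Tutte polynomial shows that the two right-hand sides in the statement coincide and equal the \emph{corank--nullity polynomial} $c_\rmm(u,v)\coloneqq\sum_{A\subseteq E}u^{\,r-\rk_{\rmm}(A)}v^{\,\#(E\setminus A)-(r-\rk_{\rmm}(A))}$, depending only on $\rmm=\rmm(L)$ and $r=\rk\rmm$. We prove $\chi(\Lambda_L)=c_{\rmm(L)}$ by induction on $\#E$, with no restriction on $L$ (so Condition~\eqref{eq:ll} is not needed). When $\#E\le 1$, $X_E$ is a point, $h_L(u,v)=(1+u)^{\dim L}(1+v)^{\#E-\dim L}$, and $\rmm(L)$ is a direct sum of coloops and loops, for which $c_\rmm$ has exactly this value.

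For the inductive step, set $n=\max E$ and let $f\colon X_E\to X_{E\setminus n}$ be the deletion map (Proposition/Definition~\ref{propdet:deletion}). Since $\chi(X_E,\scf)=\chi\bigl(X_{E\setminus n},f_!([\scf])\bigr)$ for any coherent $\scf$ (additivity of $\chi$ along the Leray filtration), $f_!\colon K^0(X_E)\to K^0(X_{E\setminus n})$ being $K$-theoretic pushforward, it suffices to compute $f_!\Lambda_L$. If $n$ is a loop of $L$ then $\scs_L\cong f^*\scs_{L\setminus n}$ and $\scq_L\cong f^*(\scq_{L\setminus n}\oplus\sco_{X_{E\setminus n}})$ by compatibility of tautological bundles with direct sums of linear spaces \cite[Proposition~3.6]{best}, so $\Lambda_L=(1+v)\,f^*\Lambda_{L\setminus n}$ and, by the projection formula with $f_!(1)=1$ (the fibres of $f$ are chains of rational curves, whence $Rf_*\sco_{X_E}=\sco_{X_{E\setminus n}}$), $f_!\Lambda_L=(1+v)\Lambda_{L\setminus n}$; symmetrically $f_!\Lambda_L=(1+u)\Lambda_{L/n}$ when $n$ is a coloop. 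The main case is $n$ neither a loop nor a coloop, where the claim is
\[
    f_!\Lambda_L(u,v)=\Lambda_{L/n}(u,v)+v\cdot\Lambda_{L\setminus n}(u,v)\qquad\text{in }K^0(X_{E\setminus n})[u,v].
\]
Applying $\chi$ in the three cases and invoking the inductive hypothesis gives $h_L=(1+v)c_{\rmm(L\setminus n)}$, $h_L=(1+u)c_{\rmm(L/n)}$, and $h_L=c_{\rmm(L/n)}+v\,c_{\rmm(L\setminus n)}$ respectively; a short computation with the corank--nullity expansion shows these are precisely the deletion--contraction identities for $c_\rmm$ — note that the third one is the \emph{asymmetric} $c_\rmm=c_{\rmm/n}+v\,c_{\rmm\setminus n}$, the asymmetry arising from the normalising factor $v^{\#E-r}$ relative to Eq.~\eqref{eq:tutte-deletion-contraction}. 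This closes the induction.

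To prove the displayed $K$-theory identity, I use the line bundles from \S\ref{sec:SQproof}: $\scl_1=\coker(f^*\scs_{L/n}\hookrightarrow\scs_L)$ on $X_E$ (Diagram~\eqref{eq:diagram-upstairs}), $\scn_1=\ker(\scq_L\twoheadrightarrow\scq_{L\setminus n\oplus L|n})$ on $X_E$ (Sequence~\eqref{eq:diagram-for-Qvee}, with $\scq_{L\setminus n\oplus L|n}\cong f^*\scq_{L\setminus n}$ since $n$ is not a loop, again \cite[Proposition~3.6]{best}), and $\scl_2=\coker(\scs_{L/n}\hookrightarrow\scs_{L\setminus n})$ on $X_{E\setminus n}$ (Diagram~\eqref{eq:diagram-downstairs}); all three are genuine line bundles since $n$ is neither a loop nor a coloop. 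Taking determinants of the relevant short exact sequences yields $\scl_1\otimes\scn_1\cong f^*\scl_2$. Now $[\scs_L]=f^*[\scs_{L/n}]+[\scl_1]$ and $[\scq_L]=f^*[\scq_{L\setminus n}]+[\scn_1]$ in $K^0(X_E)$, so multiplicativity of $\lambda_t$ and the projection formula give
\[
    f_!\Lambda_L=\lambda_u([\scs_{L/n}])\,\lambda_v([\scq_{L\setminus n}])\cdot f_!\bigl((1+u[\scl_1])(1+v[\scn_1])\bigr).
\]
Here $f_!([\scl_1])=0$ by Lemma~\ref{lem:derivedVanshings}(ii) ($n$ not a coloop); applying $f_!$ to $0\to\scl_1\to f^*\scq_{L/n}\oplus\sco\to\scq_L\to0$ and then to $0\to\scn_1\to\scq_L\to f^*\scq_{L\setminus n}\to0$ gives $f_!([\scq_L])=[\scq_{L/n}]+1$ and $f_!([\scn_1])=[\scq_{L/n}]+1-[\scq_{L\setminus n}]=1+[\scl_2]$ (using $[\scq_{L/n}]-[\scq_{L\setminus n}]=[\scl_2]$), and $f_!([\scl_1\otimes\scn_1])=f_!(f^*[\scl_2])=[\scl_2]$. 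Hence $f_!\bigl((1+u[\scl_1])(1+v[\scn_1])\bigr)=(1+v)+v(1+u)[\scl_2]$, so $f_!\Lambda_L=\lambda_u([\scs_{L/n}])\lambda_v([\scq_{L\setminus n}])\bigl((1+v)+v(1+u)[\scl_2]\bigr)$. On the other hand $[\scq_{L/n}]=[\scl_2]+[\scq_{L\setminus n}]$ and $[\scs_{L\setminus n}]=[\scs_{L/n}]+[\scl_2]$ give $\Lambda_{L/n}+v\Lambda_{L\setminus n}=\lambda_u([\scs_{L/n}])\lambda_v([\scq_{L\setminus n}])\bigl((1+v[\scl_2])+v(1+u[\scl_2])\bigr)$, and $(1+v[\scl_2])+v(1+u[\scl_2])=(1+v)+v(1+u)[\scl_2]$; the identity follows.

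The only delicate point — the main obstacle — is this last $K$-theory identity. It is tempting to expect the symmetric $f_!\Lambda_L=\Lambda_{L/n}+\Lambda_{L\setminus n}$, by analogy with the symmetric Tutte recursion and with the (correct and easy) $q=1$ formula $f_*\scq_L\cong\scq_{L/n}\oplus\sco$; but the right statement carries the extra factor $v$, and this asymmetry is exactly what aligns $h_L$ with the corank--nullity normalisation rather than the Tutte polynomial itself. Note that, in contrast to the flawed argument criticised in Remark~\ref{rm:eur-gap}, the proof never requires a description of the sheaf $f_*\bigwedge^q\scq_L$ for $q\ge2$: only the $K$-theory classes $f_!([\scl_1])$, $f_!([\scq_L])$, $f_!([\scn_1])$ are used, each extracted from a short exact sequence via the vanishing statements of Lemma~\ref{lem:derivedVanshings}.
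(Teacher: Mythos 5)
Your proof is correct, and it takes a genuinely different route from the paper's. The paper, after using Theorem~\ref{thm:SQvanishing} to replace $h^0$ by $\chi$, immediately appeals to the Chern-root machinery of \cite[Section~10]{best}: it rewrites $\chi$ as a Chern class integral over $X_E$ and then invokes the Tutte-polynomial formula \cite[Theorem~A and Theorem~10.5]{best} as a black box. You instead prove the identity $\chi\bigl(\lambda_u([\scs_L])\lambda_v([\scq_L])\bigr)=\sum_{A\subseteq E}u^{r-\rk(A)}v^{\#(E\setminus A)-(r-\rk(A))}$ by a self-contained deletion--contraction induction over $\#E$, carried out entirely in $K^0$ of permutohedral varieties. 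The heart of this is the $K$-theory recursion $f_!\Lambda_L=\Lambda_{L/n}+v\,\Lambda_{L\setminus n}$ (when $n$ is neither a loop nor a coloop), which you derive from the classes of the three line bundles $\scl_1,\scn_1,\scl_2$ and the vanishing $f_!([\scl_1])=0$ from Lemma~\ref{lem:derivedVanshings}(ii), together with the cancellation $(1+v[\scl_2])+v(1+u[\scl_2])=(1+v)+v(1+u)[\scl_2]$. Your checks of the line-bundle identity $\scl_1\otimes\scn_1\cong f^*\scl_2$, the computation of $f_!([\scn_1])$, and the matching of the three recursions with the corank--nullity normalisation of the Tutte polynomial are all accurate. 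What your approach buys is (a) independence from the intersection-theoretic apparatus of \cite[Section~10]{best} (no Chern roots, only the short exact sequences and direct-image vanishings already established in \S\ref{sec:SQproof}), and (b) a proof that never uses Condition~\eqref{eq:ll}, so it establishes the formula for arbitrary $L\subseteq\kk^E$, slightly more generally than stated. What the paper's approach buys is brevity: given the cited theorems from \cite{best}, its proof is one line. Your remark that the naive symmetric guess $f_!\Lambda_L=\Lambda_{L/n}+\Lambda_{L\setminus n}$ is wrong and must be replaced by the asymmetric version is exactly the kind of subtlety that Remark~\ref{rm:eur-gap} warns about, and you correctly avoid needing any sheaf-level description of $f_*\bigwedge^q\scq_L$ beyond its $K$-class.
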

What is essentially the same formula, but with equivariant Euler characteristic in place of $h^0$, is given in \cite[Theorem~5.2]{FinkSpeyerKClasses}. The formula above also generalises \cite[Theorem~1.2]{bestCohomology}. More precisely, granting the Proposition, one can obtain the following formulae when one of the variables is set to $0$:
\begin{align*}
    h_L(u,0) = (u+1)^{\#\mathrm{coloops}(\rmm)}, \quad\text{and}\quad
    h_L(0,v) = \sum_{\substack{A \subseteq E \\
    \text{$A$ contains a basis}}} v^{\# E - \# A}
\end{align*}
Here, the first identity comes from considering the deletion-contraction formula~\eqref{eq:tutte-deletion-contraction} for $T_{\rmm (L)}$.

\begin{proof}[Proof of Proposition~\ref{prop:tutte-vs-h0}]
    By Theorem~\ref{thm:SQvanishing}, we can write \[
        h_L (u,v) = \sum_{p,q} \chi(\bigwedge^p \scs_L \otimes \bigwedge^q \scq_L)u^p v^p.
    \] Since $\scs^\vee_L$ and $\scq^\vee_L$ have simple Chern roots in the sense of~\cite[Section~10]{best}, the right-hand term is also
    equal to
    \begin{align*}
        & (u+1)^r (v+1)^{\# E -r} \int_{X_E}c(\scs^\vee_L, \tfrac{1}{u+1}) c(\scq_L,
        -\tfrac{1}{v+1})(1+\alpha+\alpha^2+\dots)\\
        & = v^{\# E -r} T_M(u+1,v^{-1}+1),
    \end{align*} thanks to \cite[Theorem~A and Theorem~10.5]{best}, giving the first equality. The second equality follows from the definition of  $T_{\rmm (L)}$ above.
\end{proof}
\begin{remark}
    In~\cite[Theorem~3.1]{bauer2025equivarianttuttepolynomial}, the authors considered the polynomial $F_\rmm(x,y,z,w)$, which is defined by taking the reciprocal polynomial in variables $x,y$ of the multidegree \[
        \int_{X_E}^T c^T(\scs_\rmm^\vee, z)c^T(\scq_\rmm, w) \frac{1}{1- x\alpha} \frac{1}{1-y\beta} \in \ZZ[t_e : e\in E][x,y,z,w],
    \]
    where we have taken into account the identification $H_T^*(\on{pt}) \simeq \sym N_E^\vee \simeq \ZZ[t_e : e\in E]$.
    Then, as polynomials over $\ZZ[t_e: e\in E][u,v]$, one has
    \begin{align*}
        & \sum_{p,q} \on{ch}|_{ 1+t_0,\dots,1+t_n}H^0(\bigwedge^p \scs_L \otimes \bigwedge^q \scq_L)\,u^p v^p = \\
        &  \text{The coefficient at $y$-degree $0$ of}\quad y^{\# E-1}F_{\rmm}\left(1,\frac{1}{y},\frac{1}{u+1},\frac{-1}{v+1}\right),
    \end{align*} where $\on{ch}|_{u_0,\dots,u_n}(V)$ is the character of the $T$-representation of $V$ evaluated at $u_0,\dots,u_n$.
\end{remark}

The \textit{characteristic polynomial} of a rank-$r$ matroid $\rmm$ is \[
    \chi_{\rmm} (u) = (-1)^r T_{\rmm} (-u+1,0).
\]
Specialising the identity in Proposition~\ref{prop:tutte-vs-h0} to $v=-1$, negating the variable $u$, and applying the Koszul resolution in Corollary~\ref{prop:Koszul-W}, we obtain:
\begin{corollary}\label{cor:char-poly}
    Keeping the notation and assumptions , we have \[
        \chi_{\rmm(L)} (u) = \sum_{p=0}^r (-u)^{r-p} h^0(W_L, \bigwedge^p \scs_L^\vee),
    \] where $\chi_{\rmm(L)}$ is the characteristic polynomial of the matroid $\rmm(L)$.
\end{corollary}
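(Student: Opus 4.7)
The plan is to specialise Proposition~\ref{prop:tutte-vs-h0} at $v = -1$ and negate $u$ in order to transport the two-variable identity into the one-variable shape that involves $T_{\rmm(L)}(1-u, 0) = (-1)^r \chi_{\rmm(L)}(u)$. Concretely, the substitution gives
\[h_L(-u, -1) \;=\; (-1)^{\#E-r}\, T_{\rmm(L)}(1-u, 0) \;=\; (-1)^{\#E}\,\chi_{\rmm(L)}(u),\]
and expanding $h_L(-u,-1)$ by its definition produces
\[(-1)^{\#E}\,\chi_{\rmm(L)}(u) \;=\; \sum_p (-u)^p \sum_q (-1)^q\, h^0\bigl(X_E,\, \bigwedge\nolimits^p \scs_L \otimes \bigwedge\nolimits^q \scq_L\bigr),\]
where the $h^0$\/s already coincide with Euler characteristics by Theorem~\ref{thm:SQvanishing}. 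The task reduces to identifying the inner alternating sum as (up to sign) $h^0(W_L, \bigwedge^{r-p} \scs_L^\vee)$ and then reindexing $p \mapsto r-p$.

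For this I would invoke the Koszul resolution of $\sco_{W_L}$ from Corollary~\ref{prop:Koszul-W}: for any vector bundle $\sce$ on $X_E$ one has
\[\chi\bigl(W_L,\, \sce|_{W_L}\bigr) \;=\; \sum_q (-1)^q\, \chi\bigl(X_E,\, \sce \otimes \bigwedge\nolimits^q \scq_L^\vee\bigr).\]
To bring $\bigwedge^q \scq_L$ into the form $\bigwedge^{\bullet}\scq_L^\vee$ required here, I would use the duality $\bigwedge^q \scq_L \simeq \det \scq_L \otimes \bigwedge^{N-q}\scq_L^\vee$ with $N = \#E - r$, combined with $\det \scq_L \simeq \det \scs_L^\vee$ coming from $\det(\sco_{X_E}^E) = \det\scs_L \otimes \det\scq_L$. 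This yields the cleaner rewriting
\[\bigwedge\nolimits^p \scs_L \otimes \bigwedge\nolimits^q \scq_L \;\simeq\; \bigwedge\nolimits^{r-p}\scs_L^\vee \otimes \bigwedge\nolimits^{N-q}\scq_L^\vee,\]
so that after the reindexing $q \leftrightarrow N-q$ the inner sum is identified with $(-1)^N \chi(W_L,\, \bigwedge^{r-p}\scs_L^\vee|_{W_L})$.

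Finally I would replace this Euler characteristic by $h^0$ using the exterior-power filtration of $\bigwedge^{r-p}\scs_L^\vee|_{W_L}$ obtained from the Euler-like sequence in Proposition~\ref{prop:degeneracy-locus}(ii): its successive subquotients are the sheaves $\Omega^j_{W_L}(\log D_L)$, whose higher cohomology vanishes by Theorem~\ref{thm:log-vanishing}(i), so $\chi(W_L, \bigwedge^{r-p}\scs_L^\vee|_{W_L}) = h^0(W_L, \bigwedge^{r-p}\scs_L^\vee)$ by Fact~\ref{fact:lazarsfeld}. Reindexing $p \mapsto r-p$ and assembling the global sign then produces the claimed formula. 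All substantive ingredients (Theorem~\ref{thm:SQvanishing}, the Koszul resolution, and the logarithmic vanishing of Theorem~\ref{thm:log-vanishing}) are in hand, so the only real obstacle is sign bookkeeping: every passage between a bundle and its dual via a top exterior power, and every reindexing $q \leftrightarrow N-q$ or $p \leftrightarrow r-p$, introduces a $\pm$ that must be tracked against the overall factor $(-1)^{\#E}$ produced by the specialisation at $v = -1$.
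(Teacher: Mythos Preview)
Your approach is correct and matches the paper's: specialise Proposition~\ref{prop:tutte-vs-h0} at $v=-1$, negate $u$, use the duality $\bigwedge^p \scs_L \otimes \bigwedge^q \scq_L \simeq \bigwedge^{r-p}\scs_L^\vee \otimes \bigwedge^{N-q}\scq_L^\vee$, and apply the Koszul resolution of Corollary~\ref{prop:Koszul-W} to pass from $X_E$ to $W_L$.

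One small remark on the final step. To convert $\chi(W_L,\bigwedge^{r-p}\scs_L^\vee)$ to $h^0$, you route through the Euler-like filtration and invoke Theorem~\ref{thm:log-vanishing}(i). This is a forward reference in the paper's ordering (the Corollary sits in \S\ref{sec:SQproof}, while Theorem~\ref{thm:log-vanishing}(i) is proved as Theorem~\ref{thm:log-vanishing-restate} in \S\ref{sec:orlik-solomon}), though there is no circularity since that proof does not use the Corollary. The route the paper's one-line hint points to is more direct: tensor the Koszul resolution of Corollary~\ref{prop:Koszul-W} by $\bigwedge^{k}\scs_L^\vee$ and apply Fact~\ref{fact:lazarsfeld} together with Theorem~\ref{thm:SQvanishing} (via Cremona) to get $H^{>0}(W_L,\bigwedge^{k}\scs_L^\vee)=0$ immediately. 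This is precisely the first displayed vanishing in the proof of Theorem~\ref{thm:log-vanishing-restate}, so you are in effect reproving a consequence of that argument rather than using its conclusion. Either way the logic is sound; the direct route just keeps the dependency graph acyclic at the level of statements.
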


\subsection{Vanishing theorems for hook-shaped Schur functors}\label{subsec:hook-vanishing}
This subsection is logically independent from the rest of the paper. Here, Schur powers of vector bundles will be considered. To fix convention, we refer to Chapter~2 of \cite{Weyman03} for a construction.

One application of Theorem~\ref{thm:SQvanishing} is removing the characteristic-$0$ assumption in the special case of Theorem~\ref{thm:bwb-vanishing} where one take $W_L = X_E$ and $k=1$ and $\lambda_1$ is a hook. It will be a consequence of the following generalisation of a trick due to~\cite{Broer,BrionTohoku}.
\begin{lemma}\label{lem:broer-trick}
    Let $X$ be a finite-type $\kk$-scheme, and
    \begin{equation}\label{eq:ses-template}
        0\to\sce \to V\otimes \sco_X \to \scf \to 0
    \end{equation} a short exact sequence of vector bundles on $X$, where $V$ is a
    finite-dimensional $\kk$-vector space. Given an integer $t\ge 0$ and a flat coherent sheaf $\scg$ on $X$, the following are equivalent:
    \begin{enumerate}
        \item $H^p(X,\scg\otimes \sym^q \scf ) = 0$ for $p>t$ and $q\ge 0$.
        \item $H^p(X,\scg\otimes \bigwedge^q \sce) = 0$ for $p-q> t$.
    \end{enumerate}
\end{lemma}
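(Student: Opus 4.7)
The plan is to use the Koszul resolution of $\sym^q \scf$ induced by~\eqref{eq:ses-template}: since $V \otimes \sco_X$ is trivial, we obtain a locally free resolution
\begin{equation*}
    0 \to \bigwedge^q \sce \to \bigwedge^{q-1} \sce \otimes V \to \cdots \to \sce \otimes \sym^{q-1} V \to \sym^q V \otimes \sco_X \to \sym^q \scf \to 0,
\end{equation*}
whose differentials are induced by the inclusion $\sce \hookrightarrow V \otimes \sco_X$. Tensoring with the flat sheaf $\scg$ preserves exactness, so $\scg \otimes \sym^q \scf$ is quasi-isomorphic to the bounded complex $C^\bullet$ with $C^{-i} = \scg \otimes \bigwedge^i \sce \otimes \sym^{q-i} V$ for $0 \le i \le q$. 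Crucially, the triviality of $V \otimes \sco_X$ lets each term split off a finite-dimensional factor $\sym^{q-i}V$, so the cohomology of $C^{-i}$ reduces to $\sym^{q-i}V$ tensored with that of $\scg \otimes \bigwedge^i \sce$.

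For the direction $(ii) \Rightarrow (i)$, I would apply Fact~\ref{fact:lazarsfeld} directly to the complex $C^\bullet$. With $b = -i$, the condition $a + b = k$ forces $a = k + i$, and hypothesis $(ii)$ yields $H^{k+i}(\scg \otimes \bigwedge^i \sce) = 0$ whenever $(k+i) - i = k > t$. Thus every $H^a(C^b)$ with $a + b = k > t$ vanishes, giving $H^k(\scg \otimes \sym^q \scf) = 0$ and hence~(i).

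The direction $(i) \Rightarrow (ii)$ proceeds by induction on $q$. The base case $q = 0$ is just~(i) specialised at $q = 0$, since $\bigwedge^0 \sce = \sco_X = \sym^0 \scf$. For the inductive step, break the Koszul resolution into short exact sequences $0 \to W_0 \to \sym^q V \otimes \sco_X \to \sym^q \scf \to 0$ and $0 \to W_i \to \bigwedge^i \sce \otimes \sym^{q-i} V \to W_{i-1} \to 0$ for $1 \le i \le q - 1$, with $W_{q-1} \simeq \bigwedge^q \sce$. Assume $p > q + t$. The long exact sequence of the first SES, combined with the vanishing of $H^{p-q}(\scg \otimes \sym^q \scf)$ (by~(i)) and of $H^{p-q+1}(\scg)$ (by~(i) at $q = 0$), yields $H^{p-q+1}(\scg \otimes W_0) = 0$. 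Then for each $i = 1, \ldots, q-1$, the long exact sequence of the $i$-th SES together with the inductive hypothesis applied to $\bigwedge^i \sce$ in degree $p - q + i + 1$ deduces $H^{p-q+i+1}(\scg \otimes W_i) = 0$ from $H^{p-q+i}(\scg \otimes W_{i-1}) = 0$. The chain terminates at $H^p(\scg \otimes \bigwedge^q \sce) = 0$, as required.

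The main point to verify is the cohomological bookkeeping: at each link, the inductive hypothesis invoked for $\bigwedge^i \sce$ sits in degree $p - q + i + 1$ and requires $(p - q + i + 1) - i = p - q + 1 > t$, which is exactly ensured by $p > q + t$. Beyond setting up the Koszul resolution and tracking indices, there is no genuine obstacle; once the resolution is in place the argument proceeds mechanically.
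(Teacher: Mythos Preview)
Your proof is correct and follows essentially the same approach as the paper: both use the graded Koszul resolution $0 \to \scg\otimes\bigwedge^q\sce \to \cdots \to \scg\otimes\sym^q V \to \scg\otimes\sym^q\scf \to 0$, invoke Fact~\ref{fact:lazarsfeld} for $(ii)\Rightarrow(i)$, and run an induction on $q$ for $(i)\Rightarrow(ii)$. The only cosmetic difference is that you spell out the intermediate syzygies $W_i$ and propagate the vanishing step by step, whereas the paper summarises the same diagram chase as a single surjection $H^{p-q}(\scg\otimes\sym^q\scf)\twoheadrightarrow H^p(\scg\otimes\bigwedge^q\sce)$.
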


\begin{proof}
    Let $e = \rk \sce$ and $f = \rk \scf$. By \cite[Proposition~5.1.1]{Weyman03}, we have a Koszul resolution of  $\sco_{\tot \scf^\vee}$ as an $\sco_{V\times X}$-module, \[
        0\to p_2^\ast \det \sce \to p_2^\ast \bigwedge^{e-1}\sce \to \cdots \to p_2^\ast \sce \to \sco_{X\times V} \to \sco_{\tot \scf^\vee} \to 0,
    \] where the map $p_2$ is the projection $X\times V \to X$.
    Pushing forward to $X$ and applying the projection formula with the identity $p_\ast \sco_{X\times V} \simeq \sym V$ under
    consideration, we get an exact sequence \[
        \sck\colon   0 \to \det \sce \otimes \sym V \to \bigwedge^{e-1} \sce \otimes \sym V \to \cdots \to \sce \otimes
        \sym V \to \sym V \to \sym \scf \to 0.
    \]  The natural
    $\Gm$-actions on $\sce$ and $\scf$ makes this a graded resolution; let \[
        \sck(q,\scg) \colon 0 \to \scg \otimes \bigwedge^{q} \sce \to \scg \otimes \bigwedge^{q-1} \sce \otimes V \to
        \cdots \to \scg \otimes \sym^q V \to \scg \otimes \sym^q \scf \to 0
    \] be the $q$th graded piece twisted by $\scg$.

    Assuming (i), we prove (ii) by induction on $q$.
    When $q = 0$, the conclusion is clear, for $H^p(X,\scg) = 0$ for $p>t$ by assumption. Now assume
    $q> 0 $, the
    inductive hypothesis implies that \[
        H^{p-q+j+1}(X,\scg \otimes \bigwedge^{j} \sce \otimes \sym^{q-j} V) = 0,\quad \text{for
    $0\le j< q$ and $p>q+t$.}\] As a result, chopping $\sck(q,\scg)$ into short exact sequence and taking
    cohomology,  we get that for $p > q+t$, the map \[
        H^{p-q}(X,\scg \otimes \sym^q \scf) \to H^p(X,\scg \otimes \bigwedge^q \sce)
    \] is surjective. But the assumption is that the left-hand term vanishes, so the desired
    vanishing follows. That (ii) implies (i) is a direct consequence of Fact~\ref{fact:lazarsfeld}.
\end{proof}
We recall the following; see \textit{e.g.}~\cite[Exercise~2.2]{Weyman03}:
\begin{fact}\label{fact:weyman-hook-exercise}
    Let $M$ be a finite free module over a $\kk$-algebra $A$. Fix $q>0$. The Koszul complex \[
        0 \to  \bigwedge^{q} M \to  \bigwedge^{q-1} M \otimes M \to
        \cdots \to M \otimes \sym^{q-1} M \to  \sym^q M \to 0
    \] is split exact, with the cycles in $\bigwedge^{q-i} M \otimes \sym^i M$ given by $\schur^{q-i+1,1^{i-1}}M$.
\end{fact}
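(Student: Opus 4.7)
My plan is to argue in three steps: exactness of the Koszul complex, then its split exactness by homological-algebra formalities, and finally identification of the cycle modules with hook Schur functors.

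For exactness, observe that the displayed complex is the degree-$q$ graded piece of the total Koszul complex $\bigwedge^\bullet M \otimes \sym^\bullet M$ equipped with the contraction--multiplication differential $m_1 \wedge \cdots \wedge m_p \otimes f \mapsto \sum_j (-1)^{j-1}\, m_1 \wedge \cdots \widehat{m_j} \cdots \wedge m_p \otimes m_j f$. For $M$ a finite free $A$-module, this total complex is a graded incarnation of the Koszul resolution of $A$ over $\sym^\bullet M$, which is classically acyclic in positive total degree; the verification reduces to the rank-one case via K{\"u}nneth, or can simply be quoted from \cite[\S1.1]{Weyman03}. Split exactness is then a formal consequence: each $T_i \coloneqq \bigwedge^{q-i} M \otimes \sym^i M$ is free and hence projective, so breaking the long exact sequence into short exact sequences $0 \to Z_i \to T_i \to Z_{i+1} \to 0$ with $Z_i$ the cycle module at $T_i$, one inducts downward from the right end with $Z_q = T_q = \sym^q M$ free: if $Z_{i+1}$ is projective, the short sequence splits and expresses $Z_i$ as a direct summand of the free module $T_i$, so $Z_i$ is projective as well. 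In particular, every short exact sequence in the filtration splits.

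The third and most substantive step is to identify $Z_i$ with $\schur^{(q-i+1,\,1^{i-1})} M$. Here I would appeal to Weyman's characteristic-free construction of hook Schur functors in \cite[Chapter~2]{Weyman03}: under the conventions of that reference, the hook Schur module $\schur^{(q-i+1,\,1^{i-1})} M$ is realised as the image of the Koszul differential $\bigwedge^{q-i+1} M \otimes \sym^{i-1} M \to \bigwedge^{q-i} M \otimes \sym^{i} M$, and by the exactness established above, this image coincides with the kernel of the next differential, namely $Z_i$. The main obstacle is precisely this identification: it demands careful bookkeeping of conventions for Schur versus Weyl functors and for the indexing of hook shapes in arbitrary characteristic, since the two theories only coincide over a field of characteristic zero; once the definitions are aligned with Weyman's setup, the identification becomes essentially tautological.
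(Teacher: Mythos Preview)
Your argument is correct and is essentially the standard route to this exercise. Note, however, that the paper does not supply its own proof: the statement is recorded as a Fact and simply referred to \cite[Exercise~2.2]{Weyman03}. So there is nothing in the paper to compare your approach against; you are filling in what the paper leaves as a citation.

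A couple of minor remarks on your write-up. First, your downward induction for split exactness is fine, but you could equally well induct upward from the left: $Z_1 = \operatorname{im}(\bigwedge^q M \hookrightarrow \bigwedge^{q-1}M\otimes M) \cong \bigwedge^q M$ is free, and then each $Z_{i+1}$ is a quotient of the free module $T_i$ by the projective summand $Z_i$. Either direction works. Second, your honest caveat about conventions is well placed: under Weyman's setup the hook module $\schur^{(a,1^{b})}M$ is by definition the image of the relevant comultiplication--multiplication composite $\bigwedge^{a+b}M \otimes \sym^0 M \to \cdots$ landing in $\bigwedge^{a}M \otimes \sym^{b}M$ (or equivalently, the image of the single Koszul step from $T_{i-1}$ to $T_i$), so once you have aligned the indexing the identification with $Z_i$ is indeed immediate from exactness. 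The paper's convention $\schur^{(1^d)} = \sym^d$ is consistent with this at the endpoints $i=1$ and $i=q$, which is a good sanity check.
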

We now give a case where the conclusion of Theorem~\ref{thm:bwb-vanishing} holds in positive
characteristic.
\begin{proposition}\label{prop:bwb-hook-charp}
    Keeping the assumptions and notation as in Theorem~\ref{thm:SQvanishing}, let $\lambda$ be a hook. We have \(H^i(X_E, \schur^{\lambda} \scq_L) = 0\) for $i>0$.
\end{proposition}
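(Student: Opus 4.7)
The plan is to deduce the hook Schur vanishing from Theorem~\ref{thm:SQvanishing} by combining two ingredients: Fact~\ref{fact:weyman-hook-exercise}, which realises a hook Schur power of $\scq_L$ as a direct summand of a specific tensor product of an exterior and a symmetric power of $\scq_L$, and Lemma~\ref{lem:broer-trick}, the Broer--Brion trick, applied to the Euler sequence $0 \to \scs_L \to \sco_{X_E}^E \to \scq_L \to 0$.

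Write the hook as $\lambda = (a, 1^b)$ with $a \geq 1$ and $b \geq 0$. I would first invoke Fact~\ref{fact:weyman-hook-exercise} with $M = \scq_L$ and $q = a + b$. Reading the identification $\schur^{q-i+1, 1^{i-1}} M$ at $i = b + 1$ shows that $\schur^{a, 1^b} \scq_L$ appears as the cycles in the Koszul position $\bigwedge^{a-1} \scq_L \otimes \sym^{b+1} \scq_L$, and the split exactness of the complex promotes this identification to a direct summand decomposition. Consequently, it suffices to prove
\[
    H^p(X_E,\, \bigwedge^{a-1} \scq_L \otimes \sym^{b+1} \scq_L) = 0 \quad \text{for all } p > 0.
\]

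For this, I would apply Lemma~\ref{lem:broer-trick} to the Euler sequence with the flat coherent sheaf $\scg = \bigwedge^{a-1} \scq_L$ and threshold $t = 0$. The hypothesis~(ii) of the lemma is the vanishing $H^p(X_E, \bigwedge^{a-1} \scq_L \otimes \bigwedge^q \scs_L) = 0$ whenever $p > q$; this holds far more strongly by Theorem~\ref{thm:SQvanishing}, which delivers the vanishing for every $p > 0$ and $q \geq 0$. The equivalence then gives conclusion~(i): $H^p(X_E, \bigwedge^{a-1} \scq_L \otimes \sym^q \scq_L) = 0$ for all $p > 0$ and $q \geq 0$. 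Specialising to $q = b + 1$ and passing to the direct summand $\schur^{a, 1^b} \scq_L$ closes the argument.

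The only step requiring genuine thought is the direct summand identification from Fact~\ref{fact:weyman-hook-exercise}, since the characteristic-free setting forbids a naive appeal to Pieri's rule and forces one to extract the hook from the split Koszul filtration instead; locating the correct position in the Koszul complex is the combinatorial crux. Once that identification is in hand, the remainder of the proof is purely formal: it merely feeds the known vanishing from Theorem~\ref{thm:SQvanishing} into Lemma~\ref{lem:broer-trick}, and I do not anticipate any further obstacle.
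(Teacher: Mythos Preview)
Your proposal is correct and follows essentially the same approach as the paper: both apply Lemma~\ref{lem:broer-trick} with $\scg = \bigwedge^{a-1}\scq_L$ and $t=0$, feeding in Theorem~\ref{thm:SQvanishing} to obtain vanishing for $\bigwedge^{a-1}\scq_L \otimes \sym^{b+1}\scq_L$, and then extract the hook Schur functor via Fact~\ref{fact:weyman-hook-exercise}. The only cosmetic difference is that the paper phrases the final step as chopping the split Koszul complex into short exact sequences, whereas you invoke the direct-summand identification explicitly; these are equivalent given the split exactness.
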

Taking $\lambda = (1^d)$, we have $\schur^\lambda \scq_L =\sym^d \scq_L$, so this Proposition gives higher vanishing for symmetric powers of $\scq_L$ and recovers \cite[Theorem~1.3]{bestCohomology} as promised in Remark~\ref{rm:eur-gap}.
\begin{proof}
    Write $\lambda = (q-i+1,1^{i-1})$ for some $q\ge i>0$. Applying Theorem~\ref{thm:SQvanishing} and Lemma~\ref{lem:broer-trick} with $\scg = \bigwedge^k \scq_L$, we have that $H^{p}(X_E,\bigwedge^{q-i} \scq_L \otimes \sym^i \scq_L)=0$ for $p >0$. The complex in Fact~\ref{fact:weyman-hook-exercise} globalises readily to the case where $M$ is replaced by the vector bundle $\scq_L$. Chopping the complex into short exact sequences and taking cohomology yield the desired vanishing.
\end{proof}
\begin{remark}[The Carter--Lusztig resolution]
    Let $\kk$ be a field of characteristic $p>0$ and $\sce$ a vector bundle on a finite-type $\kk$-scheme $X$ with absolute Frobenius $F_{\on{abs}}\colon X\to X$. Carter and Lusztig~\cite[235]{CarterLusztig}\footnote{Their construction of Weyl module $\ol{V}_\lambda$ is in our language $\schur^{\lambda'}\sce^\vee$ with $\lambda'$ the conjugate partition. Our presentation here follows~\cite{Arapura}, which agrees with Weyman's convention.} constructed a resolution of the Frobenius pullback $F_{\on{abs}}^\ast \sce$ by Schur powers associated with hooks: \[
        0 \to F_{\on{abs}}^\ast\sce \to \schur^{(p-1,1)}\sce \to \schur^{(p-2,1^2)} \sce \to \cdots \to \schur^{\lambda}\sce \to 0,
    \] where $\lambda = (p-N,1^N)$ with $N = \min\{p-1,\rk \sce-1\}$. Taking a linear subspace $L \subseteq \kk^E$ and applying the Carter--Lusztig resolution to $\sce = \scq_L$ and $X = X_E$, this resolution becomes acyclic. It follows that \[
        H^i(X_E,F_{\on{abs}}^\ast\scq_L) = 0,\quad\text{for $i>\min\{p-2,n-r-1\}$.}
    \]
\end{remark}

\section{Logarithmic geometry and Orlik--Solomon algebras}\label{sec:orlik-solomon}

The section is devoted to studying the Orlik--Solomon algebra via the logarithmic geometry of the pair $(W_L,D_L)$ for a linear subspace $L\subseteq \kk^E$ over an arbitrary field $\kk$.

\subsection{Degeneration of the Hodge--de Rham sequence}
Before saying more about the Orlik--Solomon algebra, we first give a quick proof of Statement~(i) of Theorem~\ref{thm:log-vanishing}, which we recall below:
\begin{theorem}\label{thm:log-vanishing-restate}
    Let $\kk$ be a field and $L \subseteq \kk^E$ be a linear subspace satisfying Condition~\eqref{eq:ll}. Then at the $E_1$ page of the Hodge--de Rham spectral sequence \eqref{eq:HodgeDeligne} we have \[
    H^q(W_L,\Omega^p_{W_L}(\log D_L)) = 0,\quad\text{for all $q> 0$.}\]
\end{theorem}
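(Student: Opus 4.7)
The plan is to reduce the statement to Theorem~\ref{thm:SQvanishing} via two applications of the homological-algebra criterion in Fact~\ref{fact:lazarsfeld}. Set $r = \dim L$, so that $\scs_L$ has rank $r$ and $\scq_L$ has rank $c \coloneqq n + 1 - r$. The starting point is the Euler-like sequence \eqref{eq:Euler-seq} from Proposition~\ref{prop:degeneracy-locus}(ii); taking $p$th exterior powers gives, for each $p \geq 1$,
\[
0 \to \Omega^p_{W_L}(\log D_L) \to \bigwedge^p \scs_L^\vee|_{W_L} \to \Omega^{p-1}_{W_L}(\log D_L) \to 0,
\]
the quotient $\sco_{W_L}$ being a line bundle. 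Since $\dim W_L = r - 1$, the term $\Omega^r_{W_L}(\log D_L)$ vanishes, and splicing these short exact sequences produces an acyclic complex on $W_L$:
\[
0 \to \bigwedge^r \scs_L^\vee|_{W_L} \to \bigwedge^{r-1} \scs_L^\vee|_{W_L} \to \cdots \to \scs_L^\vee|_{W_L} \to \sco_{W_L} \to 0.
\]
Truncating on the right at the $(p+1)$st position exhibits $\Omega^p_{W_L}(\log D_L)$ as the sole nonzero cohomology sheaf of the bounded complex $[\bigwedge^r \scs_L^\vee|_{W_L} \to \cdots \to \bigwedge^{p+1} \scs_L^\vee|_{W_L}]$, which I place in non-positive cohomological degrees.

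The next step would be to establish $H^q(W_L, \bigwedge^p \scs_L^\vee|_{W_L}) = 0$ for all $q > 0$ and all $p \geq 0$. For this, I would tensor the Koszul resolution of $\sco_{W_L}$ from Corollary~\ref{prop:Koszul-W} by $\bigwedge^p \scs_L^\vee$ to obtain a complex on $X_E$ that resolves $\bigwedge^p \scs_L^\vee|_{W_L}$, whose $j$th term is $\bigwedge^j \scq_L^\vee \otimes \bigwedge^p \scs_L^\vee$ placed in cohomological degree $-j$. The crucial input is the duality identification
\[
\bigwedge^j \scq_L^\vee \otimes \bigwedge^p \scs_L^\vee \simeq \bigwedge^{c-j} \scq_L \otimes \bigwedge^{r-p} \scs_L,
\]
which follows from the standard formula $\bigwedge^k E^\vee \simeq \bigwedge^{\rk E - k} E \otimes \det E^\vee$ applied to $E = \scs_L$ and $E = \scq_L$, combined with the observation that the defining sequence $0 \to \scs_L \to \sco_{X_E}^E \to \scq_L \to 0$ forces $\det \scs_L \otimes \det \scq_L \simeq \sco_{X_E}$ so that the two determinant factors cancel. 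By Theorem~\ref{thm:SQvanishing} the right-hand side has vanishing higher cohomology on $X_E$, so a direct application of Fact~\ref{fact:lazarsfeld} yields the claim.

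Finally, feeding this vanishing back into the truncated acyclic complex from the first paragraph and invoking Fact~\ref{fact:lazarsfeld} one more time gives $H^q(W_L, \Omega^p_{W_L}(\log D_L)) = 0$ for every $q > 0$, as desired. The only non-formal ingredient is the duality identification in the second paragraph: it is what allows Theorem~\ref{thm:SQvanishing}---which is stated for $\scs_L$ and $\scq_L$ themselves---to control the dual bundles $\scq_L^\vee$ that appear in the Koszul resolution of $\sco_{W_L}$. Once this bookkeeping is in place, no further geometric input is needed; the conclusion is a purely homological consequence of the two resolutions already at hand, namely the Euler-like sequence on $W_L$ and the Koszul resolution of $\sco_{W_L}$ inside $X_E$.
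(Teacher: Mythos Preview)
Your proof is correct and follows essentially the same route as the paper: use the Koszul resolution of $\sco_{W_L}$ together with Theorem~\ref{thm:SQvanishing} to kill $H^{>0}(W_L,\bigwedge^p\scs_L^\vee)$, then use the Euler-like sequence to bootstrap to $\Omega^p_{W_L}(\log D_L)$. Your determinant-cancellation identity $\det\scs_L\otimes\det\scq_L\simeq\sco_{X_E}$ replaces the paper's appeal to the Cremona transform (via $\scq_{L^\perp}\simeq\crem^\ast\scs_L^\vee$), and you invoke Fact~\ref{fact:lazarsfeld} a second time where the paper instead runs the chain of isomorphisms $H^p(\Omega^q)\simeq H^{p+1}(\Omega^{q+1})\simeq\cdots$ terminating in $0$ by Grothendieck vanishing; both are cosmetic variations. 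One small omission: Proposition~\ref{prop:degeneracy-locus} and Corollary~\ref{prop:Koszul-W} are stated for $\kk$ algebraically closed, so you should first reduce to that case by flat base change, as the paper does.
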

\begin{proof}
    By flat base change~\cite[\href{https://stacks.math.columbia.edu/tag/02KH}{Lemma 02KH}]{stacks-project}, we can take $\kk$ to be algebraically closed, so that the assumptions of Proposition~\ref{prop:degeneracy-locus} and Corollary~\ref{prop:Koszul-W} are satisfied. 
    Tensoring the Koszul resolution in Corollary~\ref{prop:Koszul-W} by $\bigwedge^q \scs_L^\vee$, we obtain an exact sequence
    \[
        0\to \bigwedge^q \scs_L^\vee \otimes \det \scq^\vee_L \to \cdots \to \bigwedge^q \scs_L^\vee \otimes
        \scq^\vee_L \to \bigwedge^q \scs_L^\vee\to \bigwedge^q \scs_L^\vee|_{W_L} \to 0.
    \]
    Combining Fact~\ref{fact:lazarsfeld} with the Cremona transform applied to the statement of Theorem~\ref{thm:SQvanishing}, we obtain the vanishing  \[
        H^{i}(W_L, \bigwedge^q \scs_L^\vee) = 0,\quad\text{for all $i>0$}.
    \]
    Let $p>1$ be given. Taking cohomology of exterior powers of Sequence~\eqref{eq:Euler-seq} from Statement~(ii) of Proposition~\ref{prop:degeneracy-locus}, and applying the vanishing of higher
    cohomology of $\bigwedge^q \scs_L^\vee|_{W_L}$, we obtain a chain of isomorphisms \[
        H^p(\Omega^{q}_{W_L}(\log D_L)) \xrightarrow{\simeq} H^{p+1}(\Omega^{q+1}_{W_L}(\log D_L))\xrightarrow{\simeq}
        H^{p+2}(\Omega^{q+2}_{W_L}(\log D_L)) \xrightarrow{\simeq} \cdots,
    \] but this chain has to end with $0$, thanks to Grothendieck vanishing.
\end{proof}

\begin{remark}\label{rm:DeligneIllusie}
    When $\kk$ has positive characteristic, and one replaces $(W_L, D_L)$ with any log smooth pair $(X,D)$, the $E_1$-degeneration of the spectral Sequence
    \[
        E_1^{p,q} = H^q(X, \Omega^p_X(\log D)) \implies \HH^{p+q}(X,\Omega^\bullet_X(\log D))
    \]
    is not always guaranteed. One sufficient condition, due to Deligne and Illusie~\cite{DeligneIllusie}, is that $\kk$ is a perfect field with $\on{char} \kk \ge \dim X$, and the pair $(X,D)$ admits a lift to the second Witt vectors $W_2(\kk)$. However, the assumption does not always hold even for pairs $(W_L, D_L)$ of our interest. For example, it breaks when we take $L$ to be a representation of $\PG(n,2)$ with $n = \dim W_{L} > 2$.  One might want to interpret Theorem~\ref{thm:log-vanishing} as failure of the Hodge--de Rham sequence~\eqref{eq:HodgeDeligne} to be an obstruction to lifting arrangements to characteristic $0$.
\end{remark}
\begin{remark}
    When $\kk = \CC$, Deligne's argument for the isomorphism $\HH^k( \Omega^\bullet_{W_L}(\log D_L))\to H^k_{\sing}(\P L^\circ)_\CC$ involves showing the map of complexes $\Omega^\bullet_{W_L}(\log D_L) \to j_* \Omega^\bullet_{\P L^\circ}$ is a quasi-isomorphism; see \cite[\S3.1.7]{deligneHodgeII}. One might ask in Theorem~\ref{thm:log-vanishing} if the hypercohomology $\HH^i(\Omega^\bullet (\log D_L))$ can be replaced by the algebraic de Rham cohomology $H_{\dR}^i(\P L^\circ) \coloneqq\HH^i(W_L,j_* \Omega^\bullet_{\P L^\circ})$ of the arrangement complement. In positive characteristic, the answer is an emphatic \emph{no}: with the Cartier operator, it is not too hard to see $H^\bullet_\dR ({\P L^\circ})$ is mostly infinite-dimensional as a $\kk$-vector space. For instance, if one takes $\kk = \bar{\FF}_p$ and $L = \kk^2$, then $\P L^\circ \simeq \spec \kk[x,x^{-1}]$ is the complement of the projectivised rank-$2$ Boolean arrangement, and $H_{\dR}^1(\P L^\circ)$ is a free $\kk[x^p,x^{-p}]$-module generated by $t^{p-1}dt$.
\end{remark}

\subsection{Comparison theorems}
We recall some notation from the introduction. Let $H_i$ be the $i$\/th coordinate hyperplane of $\P^n$ restricted to $\P L$, cut out by a linear form $x_i \in L^\vee$. 

We denote by $\be_i$ the basis vectors of $\ZZ^{E}$, and $\be_S = \be_{s_1} \wedge \dots \wedge \be_{s_k} \in \bigwedge^k \ZZ^E$, for an ordered subset $S = \{s_1< \dots  < s_k\}\subseteq E$.  Given a linear space $L\subseteq \kk^E$ without loops, one can associate to it the Orlik--Solomon algebra \[
    \OS^\bullet(L) \coloneqq \frac{\bigwedge^\bullet \ZZ^{E}}{\langle\partial \be_S\colon \text{$S\subseteq E$ is dependent}\rangle},
\] where $\partial$ denotes the usual Koszul differential on $\bigwedge^\bullet \ZZ^{E}$, defined by \[
\partial \be_S = \sum_{i=0}^{k-1} (-1)^{i} \be_{S \setminus s_{k-i}},\quad\text{for an ordered subset $S = \{s_1<\dots<s_k\}\subseteq E$.}
\]
 The reduced Orlik--Solomon algebra $\redOS^\bullet(L)$ is the differential subalgebra generated by the submodule $\{\sum a_i \be_i : \sum a_i = 0\} \subseteq \OS^1(L)$.

The rest of the section is devoted to proving Statement~(ii) of Theorem~\ref{thm:log-vanishing}, which is a comparison theorem across the reduced Orlik--Solomon algebra, the global logarithmic forms, and the logarithmic de Rham cohomology of $(W_L,D_L)$.

\begin{theorem}\label{thm:ring-iso}
    The following three graded-commutative $\kk$-algebras are isomorphic
    \begin{enumerate}
        \item The reduced Orlik--Solomon algebra $\redOS^\bullet(L)_{\kk}$ of $L$.
        \item The exterior algebra $H^0(\Omega^\bullet_{W_L}(\log D_L))$ graded by the na{\"i}ve filtration.
        \item The logarithmic de Rham cohomology $\bigoplus_i \HH^i(\Omega^\bullet_{W_L}(\log D_L))$ with cup product multiplication.
    \end{enumerate}
\end{theorem}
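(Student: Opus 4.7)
The plan is to construct an explicit graded algebra map $\phi\colon \redOS^\bullet(L)_\kk \to H^0(\Omega^\bullet_{W_L}(\log D_L))$ realising (i)$\to$(ii), match dimensions on both sides to promote $\phi$ to an isomorphism, and then derive the identification (ii)$\simeq$(iii) from the degeneration encoded in Theorem~\ref{thm:log-vanishing-restate} together with closedness of all global log forms.

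To construct $\phi$, send a generator $f = \sum f_i \be_i$ with $\sum f_i = 0$ to the form $\omega(f) = \sum f_i \, d\log x_i = d\log \prod x_i^{f_i}$. The reduced condition makes the exponents sum to zero, so $\prod x_i^{f_i}$ is a rational function of homogeneous degree zero on $\P L$; its logarithmic derivative has simple poles exactly along the $H_i$, hence $\omega(f)$ extends to a global section of $\Omega^1_{W_L}(\log D_L)$. Extending $\phi$ multiplicatively, the defining Orlik--Solomon relations $\partial \be_S = 0$ for a dependent subset $S\subseteq E$ are sent to zero: given any linear dependence $\sum_{i\in S} c_i x_i = 0$, a direct exterior-algebra computation (already present in Brieskorn's original argument) shows that the corresponding alternating sum of wedges of $d\log x_i$'s vanishes. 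Hence $\phi$ is a well-defined homomorphism of graded-commutative $\kk$-algebras, and its image consists of closed forms since $d(d\log x_i) = 0$.

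For the dimension match, take exterior powers of the Euler-like sequence~\eqref{eq:Euler-seq} to obtain short exact sequences
\[
    0 \to \Omega^p_{W_L}(\log D_L) \to \bigwedge^p \scs_L^\vee|_{W_L} \to \Omega^{p-1}_{W_L}(\log D_L) \to 0.
\]
The vanishing $H^1(\Omega^p_{W_L}(\log D_L)) = 0$ from Theorem~\ref{thm:log-vanishing-restate} preserves exactness after $H^0$, yielding the recursion $h^0(\bigwedge^p \scs_L^\vee|_{W_L}) = h^0(\Omega^p_{W_L}(\log D_L)) + h^0(\Omega^{p-1}_{W_L}(\log D_L))$. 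Combining Corollary~\ref{cor:char-poly} with the classical Orlik--Solomon formula expressing $\chi_{\rmm(L)}$ as the alternating generating function of $\dim \OS^\bullet(L)_\kk$, a term-by-term comparison together with the standard decomposition $\OS^\bullet(L) \simeq \redOS^\bullet(L) \otimes \bigwedge[\be_E]$ gives $\dim \redOS^p(L)_\kk = h^0(\Omega^p_{W_L}(\log D_L))$. Hence $\phi$ is an isomorphism in each degree as soon as one direction is established.

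The main obstacle is verifying injectivity (equivalently surjectivity) of $\phi$ in all degrees. A natural tool is the iterated Poincar\'e residue: for each maximal chain $F_1 \subsetneq \cdots \subsetneq F_p$ of flats of $\rmm(L)$, successive residues along the corresponding smooth strata of $D_L$ define $\kk$-valued functionals on $H^0(\Omega^p_{W_L}(\log D_L))$ that pair nontrivially with the $\phi$-image of the appropriate \emph{nbc}-monomial basis element of $\redOS^p(L)$, detecting linear independence. Alternatively, one may induct on $|E|$ via the deletion map $f$ of Proposition/Definition~\ref{propdet:deletion}, matching the deletion-contraction recursion for $\redOS^\bullet$ against the geometric decomposition of $H^0(\Omega^\bullet_{W_L}(\log D_L))$ coming from the Koszul resolution in Corollary~\ref{prop:Koszul-W}. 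Once $\phi$ is an isomorphism, all of $H^0(\Omega^\bullet_{W_L}(\log D_L))$ consists of closed forms, so the global exterior derivative vanishes identically; together with Theorem~\ref{thm:log-vanishing-restate}, the Hodge--de Rham spectral sequence~\eqref{eq:HodgeDeligne} is supported on the row $q = 0$ with vanishing $d_1$ and hence degenerates trivially at $E_2$, and the edge morphism $H^0(\Omega^p_{W_L}(\log D_L)) \to \HH^p(\Omega^\bullet_{W_L}(\log D_L))$ is an algebra isomorphism by multiplicativity of the spectral sequence, yielding (ii)$\simeq$(iii).
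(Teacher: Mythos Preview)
Your overall strategy matches the paper's: construct the explicit map, match dimensions via Corollary~\ref{cor:char-poly} and the characteristic-polynomial interpretation of $\dim \OS^\bullet$, and invoke Theorem~\ref{thm:log-vanishing-restate} for (ii)$\simeq$(iii). The difference lies in how you handle bijectivity of $\phi$, which you flag as the main obstacle but do not complete.

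The paper does not attack injectivity of $\phi$ directly. Instead it factors through an ambient map $\nu\colon \OS^\bullet(L)_\kk \to H^0(W_L,\bigwedge^\bullet \scs_L^\vee)$ sending $\be_i \mapsto d\log x_i$ (no reduced condition), and proves \emph{surjectivity} of $\nu$ by a short diagram chase: the composite $\bigwedge^\bullet (\kk^E)^\vee \to H^0(X_E,\bigwedge^\bullet \scs_L^\vee) \to H^0(W_L,\bigwedge^\bullet \scs_L^\vee)$ is surjective because the relevant $H^1$'s vanish by Theorem~\ref{thm:SQvanishing} and the Koszul resolution of $\sco_{W_L}$. Combined with the dimension match for the full $\OS$, this makes $\nu$ bijective. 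One then restricts: the Euler sequence~\eqref{eq:Euler-seq} identifies $H^0(\Omega^1_{W_L}(\log D_L))$ inside $H^0(\scs_L^\vee|_{W_L})$ as exactly the kernel of contraction against $\sum x_i \partial/\partial x_i$, which is precisely $\nu(\redOS^1)$. Since $\redOS$ is generated in degree~$1$, the restricted map $\nu'$ lands in $H^0(\Omega^\bullet(\log D_L))$, is injective (being a restriction of the injective $\nu$), and is bijective by your reduced dimension count.

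Your residue and deletion-contraction sketches are plausible alternatives, but neither is carried out, and both would require nontrivial additional work (setting up iterated residues along strata of $D_L$ in arbitrary characteristic, or matching the geometric short exact sequences to the combinatorial ones). The paper's route is shorter because it recycles Theorem~\ref{thm:SQvanishing}, which is already available. On the other hand, your treatment of (ii)$\simeq$(iii) is more explicit than the paper's one-line claim: you correctly observe that vanishing of $E_1^{p,q}$ for $q>0$ alone only gives $\HH^p \simeq H^p$ of the complex $(H^0(\Omega^\bullet),d)$, and one still needs $d=0$ on global sections---which indeed follows once surjectivity of $\phi$ (equivalently of $\nu$) is known.
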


For starters, note that the linear forms $x_i$ are unique up to homothety. We shall construct a map from the Orlik--Solomon algebra to the space of global logarithmic forms that is invariant under scalings of $x_i$.

Since $L$ intersects the torus $T \subseteq \kk^E$ nontrivially, denote by $L^\circ$ the intersection $L\cap T$ and $\Xi = (x_1,\dots,x_n)$ the map $L^\circ\to T$. Consider the assignment \[
    \ell \mapsto (d\log \Xi(\ell) \colon T_\ell L^\circ \xrightarrow{D_\ell\Xi} T_{\Xi(\ell)}T \xrightarrow{\cdot \Xi(\ell)^{-1}} T_{1}T = \kk^E).
\] where $\ell \in L$.
This defines the \emph{logarithmic Gauss map}: \[
    \Gamma^\circ_L\colon \P L^\circ \to \Gr(r,\kk^E),\quad [\ell]\mapsto \im d\log \Xi(\ell).
\]
See \cite[Proposition~3.2.3]{KapranovChowQuotient} and \cite[\S2]{HackingKeelTevelev} for details. It is not hard to check that the log Gauss map $\Gamma^\circ_L$ is given by $[\ell]\mapsto [\ell^{-1}L]$. Therefore $\Gamma^\circ_L$ extends to the wonderful variety $W_L$ as the composite \[
    \Gamma_L\colon W_L\subseteq X_E\xrightarrow{\phi_L} \Gr(r,\kk^E),
\] where $\phi_L$ is given in Eq.\eqref{eq:pre-log-gauss}.

The system of coordinates $\Xi$ on $T$ induces a trivialisation of the cotangent bundle of $T$ with coframe\[
    d \log x_1,\dots,d\log x_n \in T^\vee_1T = H^0(\Gr(r,T_1T),\scs^\vee),\quad\text{where }d\log x_i \coloneqq \frac{d x_i}{x_i}.
\]
For convenience, also denote by $d \log x_i$ the pullback of $d\log x_i$ along $\Gamma_L$. We consider the map
\begin{equation}\label{eq:os-to-SL}
    \nu\colon \OS^\bullet (L)_\kk \to H^0(W_L,\bigwedge^\bullet \scs_L^\vee);\quad \be_i \mapsto d\log {x}_i.
\end{equation}
\begin{lemma}
    The map $\nu$ is a well-defined homomorphism of commutative differential graded algebra.
\end{lemma}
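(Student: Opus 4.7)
The plan is to handle the lemma in three moves: build a lift $\tilde\nu$ on the full exterior algebra, descend it to the Orlik--Solomon quotient by killing the relations $\partial \be_S$ for $S$ dependent, and verify compatibility with the Koszul differentials.

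First, the dualised tautological sequence $0 \to \scq_L^\vee \to \kk^E \otimes \sco_{X_E} \to \scs_L^\vee \to 0$ gives on global sections a $\kk$-linear map $\kk^E \to H^0(X_E, \scs_L^\vee)$ sending $e_i \mapsto d\log x_i$, which agrees with the pullback along $\Gamma_L$ by naturality of the tautological sequence on $\Gr(r, \kk^E)$. Restricting to $W_L$ and invoking the universal property of the exterior algebra yields a homomorphism of graded-commutative $\kk$-algebras
\[
    \tilde\nu\colon \bigwedge\nolimits^\bullet \kk^E \longrightarrow H^0\bigl(W_L,\, \bigwedge\nolimits^\bullet \scs_L^\vee|_{W_L}\bigr), \qquad \be_i \longmapsto d\log x_i.
\]

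The heart of the proof is to show $\tilde\nu(\partial \be_S) = 0$ for every dependent $S = \{s_0 < \cdots < s_k\} \subseteq E$. Fix a nontrivial dependence $\sum_i \alpha_i x_{s_i} = 0$ in $L^\vee$, a basis $v_1, \ldots, v_r$ of $L$, and set $M_{ij} = (v_j)_{s_i}$. At a point $\bar\ell \in \P L^\circ$ with $\ell \in L \cap T$, Proposition~\ref{prop:ggms} identifies the fibre $(\scs_L)_{\bar\ell} = \ell^{-1}L$, and in the basis $\ell^{-1}v_1, \ldots, \ell^{-1}v_r$ one reads off that $d\log x_{s_i}$ is the row $\ell_{s_i}^{-1}(M_{i,1}, \ldots, M_{i,r})$. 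For any $k$-subset $\{a_1 < \cdots < a_k\} \subseteq \{1, \ldots, r\}$, Laplace expansion along a column of ones identifies $\tilde\nu(\partial \be_S)(\ell^{-1}v_{a_1}, \ldots, \ell^{-1}v_{a_k})$, up to the nonzero factor $\pm \prod_i \ell_{s_i}^{-1}$, with the determinant
\[
    \det\bigl(\,\ell_S \,\bigm|\, M_{\bullet, a_1} \,\bigm|\, \cdots \,\bigm|\, M_{\bullet, a_k}\,\bigr),
\]
where $\ell_S = (\ell_{s_0}, \ldots, \ell_{s_k})^T$ and $M_{\bullet, a_j} = (M_{0, a_j}, \ldots, M_{k, a_j})^T$. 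The row vector $\alpha = (\alpha_0, \ldots, \alpha_k)$ annihilates this $(k+1)\times(k+1)$ matrix from the left: the constraint $\ell \in L$ gives $\alpha^T \ell_S = \sum_i \alpha_i \ell_{s_i} = 0$, and $v_{a_j} \in L$ gives $\alpha^T M_{\bullet, a_j} = \sum_i \alpha_i M_{i, a_j} = 0$. Hence the determinant vanishes, $\tilde\nu(\partial \be_S)|_{\bar\ell} = 0$, and density of $\P L^\circ$ in the integral scheme $W_L$ forces the section to vanish globally. Thus $\tilde\nu$ descends to a graded-commutative $\kk$-algebra homomorphism $\nu\colon \OS^\bullet(L)_\kk \to H^0(W_L, \bigwedge^\bullet \scs_L^\vee|_{W_L})$.

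For the CDGA compatibility, the canonical section $(1, \ldots, 1) \in H^0(W_L, \scs_L|_{W_L})$ supplied by Proposition~\ref{prop:degeneracy-locus}~(i) induces by contraction a degree-$(-1)$ derivation $\partial'$ on $H^0(\bigwedge^\bullet \scs_L^\vee|_{W_L})$. Since $\partial'(d\log x_i) = \langle e_i, (1, \ldots, 1)\rangle = 1 = \nu(\partial \be_i)$ and both $\partial, \partial'$ are graded derivations of the same degree, the identity $\partial' \circ \nu = \nu \circ \partial$ propagates from generators by the Leibniz rule. The main obstacle throughout is the determinantal vanishing in the middle paragraph: the crux is that the same coefficients $\alpha_i$ kill both the column $\ell_S$ (via $\ell \in L$) and each column $M_{\bullet, a_j}$ (via $v_{a_j} \in L$), producing a single row-dependence that collapses the augmented matrix.
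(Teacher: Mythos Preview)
Your proof is correct, but the route to the key vanishing $\tilde\nu(\partial\be_S)=0$ differs from the paper's. The paper follows the classical Orlik--Terao argument: from a dependence it writes $d\log x_{s_k}=\sum_{i<k}\frac{c_i x_{s_i}}{x_{s_k}}\,d\log x_{s_i}$, substitutes this into the alternating sum $\nu(\partial\be_S)$, and observes the coefficient of $d\log x_{s_1}\wedge\cdots\wedge d\log x_{s_{k-1}}$ collapses to zero. Your approach is instead pointwise and determinantal: you evaluate $\tilde\nu(\partial\be_S)$ on a frame of the fibre $(\scs_L)_{\bar\ell}=\ell^{-1}L$, recognise the result via Laplace expansion as (a nonzero multiple of) the determinant of the augmented matrix $[\ell_S\,|\,M_{\bullet,a_1}\,|\,\cdots\,|\,M_{\bullet,a_k}]$, and kill it with the single row-dependence $\alpha$ coming from $\sum\alpha_i x_{s_i}=0$. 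Your argument has the mild advantage of treating arbitrary dependent sets uniformly (no implicit reduction to circuits) and of making the linear algebra entirely explicit; the paper's computation is shorter and stays within the language of differential forms. Your handling of the CDGA structure, identifying the target differential as contraction against $(1,\ldots,1)$ and checking $\partial'\circ\nu=\nu\circ\partial$ on generators, is also more detailed than the paper, which simply declares this step clear.
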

\begin{proof}
    The standard argument, see for example~\cite{OrlikTerao,levineOSalgebra}, carries over \textit{mutatis mutandis}. We give a sketch for completeness.  If $S = \{s_1,\dots,s_k\}$ is a dependent set. Then there is a nontrivial linear relation $x_{s_k} = \sum_{1\le i\le k-1} c_i x_{s_i}$ so that one can write \[
        d \log x_{s_k} = \sum_{1\le i\le k-1}\frac{c_i x_{s_i}}{x_{s_k}}d\log x_{s_i}.
    \] Then one computes
    \begin{align*}
        \nu(\partial \be_S) & = \sum_{1\le j\le k} (-1)^{j-1} (d\log x_{s_1}\wedge \dots\wedge d\log\widehat{x}_{s_j}\wedge \dots\wedge d\log x_{s_k})\\
        & = \Big((-1)^{k-2}\sum_{1\le j\le k-1} \frac{c_j  x_{s_j}}{ x_{s_k}} + (-1)^{k-1}\Big)\; d\log x_{s_1}\wedge\dots \wedge d\log x_{s_{k-1}}= 0.
    \end{align*}
    Therefore $\nu$ descends to an graded-commutative algebra homomorphism. It is also clear that the  map $\nu$ respects the differentials.
\end{proof}
\begin{proposition}
    The map $\nu$ is an isomorphism.
\end{proposition}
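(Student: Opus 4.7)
The plan is to establish surjectivity of $\nu$ and then match dimensions in each degree to conclude it is an isomorphism. The map $\nu$ factors through $\bigwedge^p \kk^E \twoheadrightarrow \OS^p(L)_\kk$ via $\eta\colon \bigwedge^p \kk^E \to H^0(W_L, \bigwedge^p \scs_L^\vee)$, the map on global sections induced by the dual tautological surjection $\kk^E \otimes \sco_{X_E} \twoheadrightarrow \scs_L^\vee$ which sends $\be_i$ to $d\log x_i$. It thus suffices to show $\eta$ is surjective. The sheaf kernel $\sck$ of $\bigwedge^p \kk^E \otimes \sco_{X_E} \twoheadrightarrow \bigwedge^p \scs_L^\vee$ on $X_E$ admits a natural filtration with associated graded pieces $\bigwedge^a \scq_L^\vee \otimes \bigwedge^{p-a} \scs_L^\vee$ for $a \ge 1$. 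Applying Theorem~\ref{thm:SQvanishing} to the dual matroid $L^\perp$, together with the Cremona identifications $\crem^\ast \scs_L^\vee \simeq \scq_{L^\perp}$ and $\crem^\ast \scq_L^\vee \simeq \scs_{L^\perp}$ from \S\ref{subsec:intro-to-best}, yields $H^{>0}(X_E, \bigwedge^a \scq_L^\vee \otimes \bigwedge^{p-a} \scs_L^\vee) = 0$; iterating along the filtration gives $H^i(X_E, \sck) = 0$ for all $i > 0$, so $\bigwedge^p \kk^E \twoheadrightarrow H^0(X_E, \bigwedge^p \scs_L^\vee)$ is surjective. To pass from $X_E$ to $W_L$, tensor the Koszul resolution of $\sco_{W_L}$ from Corollary~\ref{prop:Koszul-W} with $\bigwedge^p \scs_L^\vee$ and apply Fact~\ref{fact:lazarsfeld}: the same higher vanishing on $X_E$ forces the restriction map $H^0(X_E, \bigwedge^p \scs_L^\vee) \twoheadrightarrow H^0(W_L, \bigwedge^p \scs_L^\vee|_{W_L})$ to be surjective. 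Composing, $\eta$, hence $\nu$, is surjective.

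For the dimension count, Corollary~\ref{cor:char-poly} expresses $\sum_p h^0(W_L, \bigwedge^p \scs_L^\vee)\,t^p$ in terms of the characteristic polynomial $\chi_{\rmm(L)}$, while the classical Orlik--Solomon formula gives $\sum_p \dim_\kk \OS^p(L)_\kk \cdot t^p = (-t)^r \chi_{\rmm(L)}(-t^{-1})$. A direct substitution shows the two series agree, so the dimensions match in each degree. A surjection between finite-dimensional graded vector spaces of equal dimension in each degree is an isomorphism, completing the proof.

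The main obstacle I expect is careful bookkeeping for the filtration of $\sck$---one must propagate the higher cohomology vanishing across each graded piece via the induced long exact sequences---and correctly tracking the sign conventions when identifying the two Hilbert series via Corollary~\ref{cor:char-poly}. An alternative route, which might offer more structural insight but seems more involved, would be induction on $|E|$ using a deletion-restriction exact sequence both for the Orlik--Solomon algebra and for the tautological sheaves under $f\colon W_L \to W_{L\setminus n}$.
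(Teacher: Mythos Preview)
Your proof is correct and follows essentially the same route as the paper's: both establish surjectivity of $\nu$ by first showing $\bigwedge^p(\kk^E)^\vee \twoheadrightarrow H^0(X_E,\bigwedge^p\scs_L^\vee)$ via the filtration on the kernel with graded pieces $\bigwedge^a\scq_L^\vee\otimes\bigwedge^{p-a}\scs_L^\vee$ and Theorem~\ref{thm:SQvanishing} (pulled back through Cremona), then passing to $W_L$ via the Koszul resolution and Fact~\ref{fact:lazarsfeld}, and finally matching graded dimensions through the characteristic polynomial (the paper cites Yuzvinski\u{\i} where you invoke the classical Orlik--Solomon formula). The only cosmetic difference is that the paper packages the factorization into a commuting square, whereas you describe it in prose.
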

\begin{proof}
    We have the following commuting diagram:
    \[
        \begin{tikzcd}
            {\bigwedge^\bullet (\kk^E)^\vee} & {\OS^\bullet(L)_\kk } \\
            {H^0(X_E,\bigwedge^\bullet \scs_L^\vee)} & {H^0(W_L,\bigwedge^\bullet \scs_L^\vee)}
            \arrow[two heads, from=1-1, to=1-2]
            \arrow[two heads, from=1-1, to=2-1]
            \arrow["\nu",from=1-2, to=2-2]
            \arrow[two heads, from=2-1, to=2-2]
    \end{tikzcd}\]
    where the left vertical arrow comes from applying the exterior algebra functor to $(\kk^E)^\vee \otimes \sco_{X_E}\to \scs_L^\vee$ and taking global section. The bottom arrow comes from restriction of global sections.

    We say a word about surjectivities in the diagram. First, there is a filtration on the kernel $\bigwedge^k (\kk^E)^\vee \to \bigwedge^k \scs_L^\vee$, where the associated graded is $\bigwedge^a \scs_L^\vee \otimes \bigwedge^{b} \scq_L^\vee$ for $a+b = k$ and $0 \le a \le k-1$. By Theorem~\ref{thm:SQvanishing}, all the associated graded have vanishing $H^1$, so the kernel has no $H^1$ either. This gives surjectivity of the left vertical arrow. Next, considering twisting the Koszul resolution in Proposition~\ref{prop:Koszul-W} by $\bigwedge^k \scs_L^\vee$, one deduces from Fact~\ref{fact:lazarsfeld} and Theorem~\ref{thm:SQvanishing} that the bottom horizontal map has zero cokernel. Therefore, the map $\nu$ has to be surjective.

    By Corollary~\ref{cor:char-poly}, we have that $h^0(W_L,\bigwedge^k \scs_L^\vee)$ computes the characteristic polynomial of the matroid of $L$. 
    However, the coefficient of the characteristic polynomial is also the dimension of $\OS^k(L)_\kk$~\cite[Corollary~2.11,Corollary~4.6]{YuzvinskiiBook}. Hence $\nu$ is an isomorphism.
\end{proof}
\begin{proof}[Proof of Theorem~\ref{thm:ring-iso}]
    By Theorem~\ref{thm:log-vanishing-restate} and the Hodge--de Rham spectral sequence, we immediately deduce that Item~(ii) is isomorphic to Item~(iii). It suffices to show Item~(i) is isomorphic to Item~(ii).
    The generator $d\log  x_i$ is dual to the derivation $ x_i \frac{\partial}{\partial  x_i}$.

    Combining Corollary~\ref{cor:char-poly} and the exterior powers applied to Sequence~\eqref{eq:Euler-seq}, we get that $\sum_k h^0(\Omega^k_{W_L}(\log D_L)) u^k $ computes the reduced characteristic polynomial of the matroid of $L$. But this is also the dimension of $\redOS^k(L)_\kk$. Therefore it is enough to show $\nu$ restricts to an injection $\nu'\colon \redOS^\bullet(L)_\kk \to H^0(\Omega^\bullet_{W_L}(\log D_L))$.

    Taking cohomology of Sequence~\eqref{eq:Euler-seq} \[
        0\to H^0(\Omega^1_{W_L}(\log D_L)) \to H^0(W_L,\scs_L^\vee) \to H^0(\sco_{W_L}) \to 0,
    \] where the third map map is now given by contraction against the Euler vector field $\sum_i  x_i \frac{\partial}{\partial  x_i}$. Therefore the kernel of contraction is subspace \[
        H^0(\Omega_{W_L}^1(\log D_L))=\Big\{\sum_i a_i d\log x_i: \sum_i a_i = 0 \Big\},
    \] but this is also the image of $\redOS^1(L)_\kk$ under $\nu$ by definition. Since $\redOS^\bullet(L)_\kk$ is generated in degree $1$, we have the desired inclusion $\nu ( \redOS^\bullet(L)_\kk ) \subseteq H^0(\Omega^\bullet_{W_L}(\log D_L))$.
\end{proof}
\begin{example}[Boolean arrangements]
    Let $\kk$ be a field and $L = \kk^E$. By definition, the arrangement complement is $\P T$ and the wonderful compactification $W_L$ is the permutohedral variety $X_E$. The boundary divisor of the compactification is the toric boundary $D_E$ of $X_E$. Recall that $N_E$ is the cocharacter lattice of $\P T$, and denote by $N_{E,\kk} = N_E\otimes \kk$. Since $X_E$ is a smooth projective toric variety, the bundle of log $1$-forms $\Omega_{X_E}^1(\log D_E)$ is trivial, with a canonical identification $\Omega^1_{X_E}(\log D_E) \simeq N_{E,\kk} \otimes \sco_X$; see~\cite[\S8.1]{CLS}. Taking global section, one deduces that $H^0(\Omega_{X_E}^\bullet(\log D_E))\simeq \bigwedge^\bullet N_{E,\kk}$.

    Moreover, the logarithmic de Rham complex is given by \[
        0\to \sco_{X_E} \to N_{E,\kk} \otimes \sco_{X_E} \to \bigwedge^2 N_{E,\kk} \otimes \sco_{X_E} \to  \bigwedge^3 N_{E,\kk} \otimes \sco_{X_E} \to \cdots
    \] with trivial differential; therefore, the log algebraic de Rham cohomology $\HH^\bullet(\Omega_{X_E}^\bullet(\log D_E))$ is also given by $\bigwedge^\bullet N_{E,\kk}$.

    Finally, the matroid associated to $L$ is the Boolean matroid $U_{n+1,E}$ on the set $E$. There are no dependent sets in the matroid $U_{n+1,E}$, so the reduced Orlik--Solomon algebra $\redOS(L)$ is given by $\bigwedge^\bullet \be_E^\perp \simeq \bigwedge^\bullet N_E$, where we identify $N_E$ and $\be_E^\perp$ using the standard dot product on $\ZZ^E$. Reducing to $\kk$ gives back the same algebra as in the previous two paragraphs.
\end{example}
\begin{example}[Rational points on the projective line]
    Let $q$ be a prime power and $k = \bar{\FF}_q$ be an algebraic closure of $\FF_q$. Let $\P L$ be the row span of the $2$-by-$(q+1)$ matrix \[
        \begin{pmatrix}
            1 & 1 &  \cdots & 1 & 0\\
            0 & 1 & \cdots & q-1 & 1
        \end{pmatrix}
    \quad\text{inside $\P^{q}_k$.}\]  Then $\P L$ is a projective line in $\P^{q}_k$ and the intersection $\P L \cap \sum H_i$ is the $0$-dimensional subscheme of $\FF_q$-points in $\P L$. In the strict transform, one still sees $W_L \simeq \P^1$ and $D_L \simeq \P^1(\FF_q)$. Let $\omega_W \simeq \sco(-2)$ be the canonical bundle of $W_L$, so that $\Omega^1_{W_L}(\log D_L) = \omega_W(D_L) = \sco_{\P^1}(q-1)$.

    Let $[x:y]$ be a standard system of coordinates on $\P^1$. The log canonical linear series $H^0(\omega_W(D))$ contains linearly independent differential forms
    \begin{equation}\label{eq:basis-PG1q}
        d \log x - d\log(cx+y),\quad \text{for all }c \in \FF_q.
    \end{equation} These differential forms give the degree-$1$ piece of the reduced Orlik--Solomon algebra $\redOS(\PG(1,q))$.

    Since $q>1$, the line bundle $\Omega^1_{W_L}(\log D_L) = \sco(q-1)$ has no $H^1$, so $H^0(\Omega^\bullet(\log D_L)) \simeq \HH^\bullet (\Omega_{W_L}^\bullet(\log D_L))$. By Riemann-Roch, the space $H^0(\omega_W(D))$ is $q$-dimensional, so the forms in Eq.\eqref{eq:basis-PG1q} also constitute a basis for $H^0(\omega_W(D))$. The computation in this example generalises readily to the case where the linear subspace $L$ is two-dimensional.
\end{example}

Before giving the next example, we first introduce a combinatorial gadget. Fix a matroid $\rmm$ on ground set $E$ with ordering $<$, we say a subset $S\subset E$ is a \textit{broken circuit} if it is of the form $C - \min C$ for some circuit $C$ of $\rmm$. An \textit{nbc-set} of $\rmm$ is a basis of $\rmm$ that contains no broken circuits of $\rmm$. Fix the ordering $0<1<\dots<n$. It is well-known that a monomial basis for the reduced Orlik--Solomon algebra is given by the set of \textit{reduced nbc-monomials},
\begin{equation}\label{eq:reduced-nbc}
    \big\{(\be_{s_1}-\be_{0})\wedge\dots\wedge(\be_{s_k}-\be_0)\mid \text{$S = \{s_1,\dots,s_k\}\subseteq [n]$ such that $S\cup 0$ is an \textit{nbc}-set.}\big\}
\end{equation}

See \cite[Section~3.2]{OrlikTerao} for details.

\begin{example}[Linear series of cubics passing through the Fano plane]\label{ex:serre-example}
    We work in characteristic $2$. This example is due to Serre and popularised by Hartshorne in~\cite[Exercise~III.10.7]{Har}. Consider the linear series $|U|\subseteq |\sco(3)|$ of plane cubics in $\P^2$ containing $Z=\P^2(\FF_2)$. Then $|V|$ is $2$-dimensional with base locus $Z$. Let  $W\subset \P^2 \times \P V^\vee$ be the graph of the evaluation map $\phi_{|V|}\colon \P^2\dashrightarrow \P V^\vee$. Then the projection to the first factor $p_1\colon W\to \P^2$ is given by blowing up $\P^2$ at $Z$, so $W$ is the wonderful variety associated to arrangement of the lines defined over $\FF_2$ in $\P^2$; let $\Delta$ be the divisor given by sum of these lines. The matroid associated to $W$ is the Fano matroid $\PG(2,2)$. Denote by $\alpha$ the pullback of a general hyperplane, and $E$ the exceptional divisor of the blow up $p_1$. The boundary divisor of the compactification $W$ is $D=p_1^{-1}\Delta +E$.

    We can directly deduce that $\Omega^i_W(\log D)$ has no higher cohomology, for $i=0,1,2$. For $i=0$, the assertion is trivial. By \cite[Proposition~11.1]{LangerDeligneLusztig}, there is a short exact sequence
    \begin{equation}\label{eq:langer-ses}
        0\to \sco_W(3\alpha-E)\to \Omega_{W}^1(\log D ) \to \sco_W(\alpha) \to 0.
    \end{equation}
    Denote by $I_Z$ the ideal sheaf $\bigoplus_{p\in Z} \mathfrak{m}_P$ of $Z$. By \cite[Theorem~4]{Artin66}, we have that $R^i p_{1,\ast} \sco(-E)$ equals to $I_Z$ when $i=0$ and vanishes for $i>0$, so by the Leray spectral sequence and projection formula, we get $H^i(W, \sco(3\alpha-E)) \simeq H^i(\P^2, I_Z(3))$. The linear series $|V|$ is given $V = H^0(I_Z(3))$, which is $3$-dimensional. Furthermore, we have $h^0(\sco(3))=10$ and $h^0(\sco_Z(3))=7$. Combining the previous two sentences and counting dimensions gives $H^{>0}(I_Z(3))=0$. Then, taking cohomology of Sequence~\eqref{eq:langer-ses}, one deduces the claim for $i=1$. The argument for the case $i=2$ is the similar, which we will omit. Applying the claim to the Hodge--de Rham spectral sequence~\eqref{eq:HodgeDeligne}, one get the isomorphism $H^0(\Omega_W^\bullet(\log D)) \simeq \HH^\bullet (\Omega_W^\bullet(\log D))$.

    Next, we compare the Betti numbers of $H^0(\Omega_W^\bullet(\log D))$ with that of $\redOS(\PG(2,2))$. Combining Sequence~\eqref{eq:langer-ses} and the vanishing statements from the previous paragraph, one deduces that
    \begin{align*}
        h^0(\Omega^1_W(\log D)) & = h^0(I_Z(3)) + h^0(\sco(1)) = 3+3 = 6\text{, and }\\
        h^0(\Omega^2_W(\log D)) & = h^0(I_Z(4)) = 8.
    \end{align*}

    \begin{figure}
        \centering
        \begin{tikzpicture}[scale=2]
            \node[shape=circle,draw=black,scale=.75] (A) at (0,0) {6};
            \node[shape=circle,draw=black,scale=.75] (B) at (90:1) {1};
            \node[shape=circle,draw=black,scale=.75] (C) at (210:1) {0};
            \node[shape=circle,draw=black,scale=.75] (D) at (-30:1) {2};
            \node[shape=circle,draw=black,scale=.75] (E) at (30:0.5) {3};
            \node[shape=circle,draw=black,scale=.75] (F) at (-210:0.5) {5};
            \node[shape=circle,draw=black,scale=.75] (G) at (-90:0.5) {4};
            \draw (A)--(B)--(F)--(C)--(G)--(D)--(E)--(B);
            \draw (A)--(E);
            \draw (A)--(F);
            \draw (A)--(G);
            \draw (A)--(D);
            \draw (A)--(C);
            \draw (E) edge[bend right=60] (F);
            \draw (F) edge[bend right=60] (G);
            \draw (G) edge[bend right=60] (E);
        \end{tikzpicture}
        \caption{Fano plane $\check{\P}^2(\FF_2)$}
        \label{fig:fano-plane}
    \end{figure}
    In Figure~\ref{fig:fano-plane} , the labelled nodes represent points in $\check{\P}^2(\FF_2)$, \textit{i.e.}, lines in $\P^2$ defined over $\FF_2$. The edges represent hyperplanes in $\check{\P}^2(\FF_2)$; two nodes share an edge if the corresponding lines in $\P^2$ meet at a point. For example, Line $3,4,$ and $5$ meet at a point. Under the ordering $0<1<\dots<6$, the \textit{nbc}-sets are
    \begin{align*}
        0,1,2,3,4,5,6, & \\
        0\ul 1,0\ul 2,0\ul 3,0\ul 4,0\ul 5,0\ul 6, & 12,13,14,34,25,35,16,26,\\
        &   0\ul{12},0\ul{13},0\ul{14},0\ul{16},0\ul{25},0\ul{26},0\ul{34},0\ul{35}.
    \end{align*}
    Here, the underlined subsets of the \textit{nbc}-sets are subsets $S$ that give rise to reduced \textit{nbc}-monomials in Eq.\eqref{eq:reduced-nbc}. Therefore, the vector space $\redOS^i(\PG(2,2))=H^0(\Omega^i_W(\log D))$ for $i=1,2$ are respectively generated by logarithmic forms
    \begin{align*}
        & d\log \frac{ x_1}{ x_0},\; d\log \frac{ x_2}{ x_0},\;\dots,\; d\log \frac{ x_6}{ x_0},\quad\text{and,}\\
        & d\log \frac{ x_1}{ x_0}\wedge d\log \frac{ x_2}{ x_0}, \;d\log \frac{ x_1}{ x_0}\wedge d\log \frac{ x_3}{ x_0},\;\dots,\;d\log \frac{ x_3}{ x_0}\wedge d\log \frac{ x_5}{ x_0},
    \end{align*}
    where we write $d\log  x_i/ x_j = d\log  x_i - d\log  x_j$.
\end{example}
\begin{remark}[Works of Langer and Gro{\ss}e-Kl{\"o}nne]
    The space $W$ in Example~\ref{ex:serre-example} is the compactification of a Deligne--Lusztig variety inside the variety $\Fl_3$ of complete flags in $\kk^3=\FF_2^3$~\cite[Proposition~7.1]{LangerDeligneLusztig}. The set of logarithmic forms we have produced  coincides with the basis of the logarithmic de Rham cohomology that Gro{\ss}e-Kl{\"o}nne gave in~\cite{GrosseKloenne05}.
\end{remark}
\subsection{Other compactifications}\label{subsec:other-compactification}
Recall that over the complex numbers, the right-hand of sequence~\eqref{eq:HodgeDeligne} computes the Betti cohomology $H^k_\sing(\P L^\circ)_\CC$ of the arrangement complement $\P L^\circ$, and the mixed Hodge structure on $H^k_\sing(\P L^\circ)$ is independent of the choice of the compactification~\cite[Th{\'e}or{\`e}me~3.2.5.(ii)]{deligneHodgeII}. Furthermore, by \cite[Proposition~3.1.8]{deligneHodgeII}, given any smooth projective compactification $\ol{\P L^\circ}$ with simple normal crossings boundary $D$, the logarithmic de Rham cohomology $\HH^\bullet(\ol{\P L^\circ},\Omega_{\ol{\P L^\circ}}^\bullet(\log D))$ computes the reduced Orlik--Solomon algebra $\redOS(L)$ of the linear space $L$. 

When the ground field is no longer $\CC$, this is the most general form of an analogous statement we find:

\begin{corollary}\label{cor:factorisation}
    Keeping the assumptions and notation as in Theorem~\ref{thm:log-vanishing-restate}.
    For $i=1,\dots,e$, let $Y_i$ be a nonsingular projective variety over $\kk$ and
    $D_i\subset Y_i$ a simple normal crossings divisor. Assume there exists a commutative diagram of blow-ups and blow-downs along centres which are strata of the pairs $(Y_i, D_i)$: \[
        \begin{tikzcd}
            & {Y_1} && {Y_3} && {Y_{e-1}} \\
            {Y_0 = W_L} && {Y_2} && \cdots && {Y_e}
            \arrow["{b_1}", from=1-2, to=2-1]
            \arrow["{b_2}"', from=1-2, to=2-3]
            \arrow["{b_3}", from=1-4, to=2-3]
            \arrow[from=1-4, to=2-5]
            \arrow[from=1-6, to=2-5]
            \arrow["b_e"',from=1-6, to=2-7]
    \end{tikzcd}\]
    so that $D_{2i+1} = b_{2i+1}^{-1} D_{2i} = b_{2i+2}^{-1} D_{2i+2}$ as reduced subschemes for all $i$. Then for all $i=0,\dots,e$, we have $H^{>0}(Y_i,\Omega_{Y_i}^\bullet(\log D_i))=0$ and \[
    \redOS^k(L)_\kk\simeq H^0(Y_i,\Omega_{Y_i}^k(\log D_i)) \simeq \HH^k(Y_i,\Omega_{Y_i}^\bullet(\log D_i)).\]
\end{corollary}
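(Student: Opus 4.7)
The plan is to reduce everything to a single local statement about how logarithmic differentials behave under blow-ups along strata of SNC pairs, and then to propagate the conclusions of Theorem~\ref{thm:log-vanishing-restate} and Theorem~\ref{thm:ring-iso} step by step along the given zigzag starting from $Y_0 = W_L$.

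The first step is to establish (or quote from standard references such as Esnault--Viehweg's lectures on vanishing theorems) the following key lemma: if $b\colon Y'\to Y$ is the blow-up of a smooth projective $\kk$-variety along a smooth closed subvariety $Z$ that is a stratum of an SNC divisor $D\subset Y$, and $D' \coloneqq b^{-1}(D)_{\mathrm{red}}$, then for every $p\ge 0$ the natural morphism
\[
\Omega^p_{Y}(\log D) \longrightarrow Rb_\ast\,\Omega^p_{Y'}(\log D')
\]
is an isomorphism in the derived category, and hence so is the analogous map of log de Rham complexes $\Omega^\bullet_{Y}(\log D) \to Rb_\ast\,\Omega^\bullet_{Y'}(\log D')$. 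This is a local assertion that reduces to a chart $Z = \{x_1 = \cdots = x_k = 0\} \subseteq \{x_1 \cdots x_r = 0\} = D$ with $k \le r$; on the standard affine charts of the blow-up, $\Omega^\bullet(\log D')$ is generated by $d\log(\text{new coordinates})$ that can be expressed in terms of $d\log x_i$, so the push-forward and vanishing of higher direct images are direct computations, valid in any characteristic.

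Granting the key lemma, the Leray spectral sequence together with the projection formula yield, for every $p, k \ge 0$,
\[
H^p(Y', \Omega^k_{Y'}(\log D')) \simeq H^p(Y, \Omega^k_{Y}(\log D)),\qquad \HH^p(Y', \Omega^\bullet_{Y'}(\log D')) \simeq \HH^p(Y, \Omega^\bullet_{Y}(\log D)),
\]
and these isomorphisms are compatible with the Hodge--de Rham spectral sequence~\eqref{eq:HodgeDeligne} on both sides and with its edge morphism. Since by hypothesis every blow-up map in the zigzag is of the type covered by the key lemma (its centre is a stratum of the target SNC pair and $D_{2i+1}$ is the reduced preimage of both $D_{2i}$ and $D_{2i+2}$), one can traverse the diagram node by node: the higher vanishing $H^{>0}(\Omega_{Y_i}^\bullet(\log D_i)) = 0$, the identification of $H^0(\Omega_{Y_i}^\bullet(\log D_i))$ with $\redOS^\bullet(L)_\kk$, and the ring isomorphism with $\HH^\bullet(\Omega_{Y_i}^\bullet(\log D_i))$ all transfer from $Y_0 = W_L$ to every $Y_i$.

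The main obstacle is the key lemma itself: while it is classical over $\CC$, one must verify it holds for the centres occurring here, which lie in the intersection of several components of $D$ and in arbitrary characteristic. Fortunately this is exactly the situation produced by the de Concini--Procesi building-set blow-up procedure, and the local calculation is transparent on monomial charts. Everything else is formal consequences of the Leray spectral sequence and the previously proved theorems.
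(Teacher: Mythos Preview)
Your proposal is correct and follows essentially the same approach as the paper: both establish that sheaf and hyper- cohomology of the log de Rham complex are invariant under a blow-up along an SNC stratum, and then propagate from $Y_0=W_L$ along the zigzag by induction. The only difference is packaging of the key lemma---the paper obtains it by citing Brion's observation that $\Omega^1_{Y'}(\log D')\simeq b^\ast\Omega^1_Y(\log D)$ and then applying $Rb_\ast\sco_{Y'}\simeq\sco_Y$ together with the projection formula, whereas you propose a direct local chart computation; both are valid in arbitrary characteristic.
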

\begin{proof}
    By the proof of~\cite[Lemma~2.1]{BrionTohoku}, we have for all $i$, \[
        \Omega_{Y_{2i+1}}^1(\log D_{2i+1}) \simeq b_{2i+1}^\ast\Omega_{Y_{2i}}^1(\log D_{2i} ) \simeq b_{2i+2}^\ast\Omega_{Y_{2i+2}}^1(\log D_{2i+2}).
    \]
    One can proceed by induction on $i$. The case $i=0$ is done. Henceforth assume $i>0$. If $i$ is odd, we apply the identity $R b_{i,\ast} \sco_{Y_i} \simeq \sco_{Y_{i-1}}$ and the projection formula to get the equalities \(H^p(\Omega^q_{Y_{i-1}}(\log D_{i-1})) = H^p(\Omega^q_{Y_{i-1}}(\log D_{i-1}))\) for all $p,q\ge 0$. If $i$ is odd, we apply the identity $R^i f_\ast \sco_{Y_i} \simeq \sco_{Y_{i+1}}$ instead to get the same equalities. In both cases, the assertion will follow from the inductive hypothesis.
\end{proof}

An interesting class of varieties that arises this way include wonderful compactifications attached to arbitrary building sets in the sense of de Concini and Procesi~\cite{dcp}. Consider the linear subspace $L_m=\kk^{m-1}/(1,\dots,1)\subseteq \kk^{\binom{m-1}{2}}$, where for $1\le i<j\le m-1$, the $ij$-factor of the embedding is $v \mapsto v_i-v_j$. This gives the braid arrangement $\mathcal{B}_{m-1}$ on $L_m$. By \cite[Section~4.3]{dcp},
there exists a building set for which the wonderful compactification is the Deligne--Mumford--Knudsen moduli space $\Mdmzero$ of stable rational curves with $m$ marked points. Moreover, the wonderful variety $W_{L_m}$ is a blow-up of $\Mdmzero$ so that the inverse image of the boundary divisor $\partial \Mdmzero = \Mdmzero - M_{0,m}$ to $W_{L_m}$ equals the boundary divisor $D_{L_m}$ of $W_{L_m}$~\cite[Lemma~4.2]{bestCohomology}. The space $\Mdmzero$ is a smooth projective scheme over $\spec \ZZ$, and $\partial \Mdmzero $ is a simple normal crossings divisor relative to $\spec \ZZ$. As a matter of notation, write $\Omega^\bullet_{\log} = \Omega^\bullet_{\Mdmzero}(\log \partial \Mdmzero)$. 

\begin{corollary}\label{cor:MdmzeroSpecZ}
    Over $\spec \ZZ$, we have ring isomorphisms \[
    \redOS^k(\mathcal{B}_{m-1})\simeq H^0(\Mdmzero,\Omega^k_{\log}) \simeq \HH^k(\Mdmzero,\Omega^\bullet_{\log}).\]
\end{corollary}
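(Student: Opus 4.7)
The plan is to promote the field-level comparison of Corollary~\ref{cor:factorisation} to the integral model $\Mdmzero/\ZZ$ by cohomology and base change. Write $\pi\colon \Mdmzero \to \spec\ZZ$; by the paper's standing hypothesis, $\pi$ is smooth and proper and $\partial\Mdmzero$ is a relative simple normal crossings divisor, so each $\Omega^k_{\log}$ is a $\pi$-flat, locally free $\sco_{\Mdmzero}$-module.

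First, I would build the integral comparison map. The regular logarithmic forms $d\log(x_i-x_j)$ are defined on $M_{0,m}\subseteq \Mdmzero$ over $\spec\ZZ$, and a verbatim copy of the construction preceding Theorem~\ref{thm:ring-iso} produces a morphism of graded-commutative $\ZZ$-algebras
\[
    \nu_\ZZ \colon \redOS^\bullet(\mathcal{B}_{m-1}) \longrightarrow H^0(\Mdmzero, \Omega^\bullet_{\log}),
\]
whose reduction at every residue field $\kk(s)$ recovers the map $\nu$ of \eqref{eq:os-to-SL}.

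Next, I would run Corollary~\ref{cor:factorisation} fibrewise. For every $s\in \spec\ZZ$, the fibre $\Mdmzero_{\kk(s)}$ and the maximal-building-set wonderful variety $W_{L_m\otimes\kk(s)}$ are linked by an iterated blow-up along strata of the log pair, so the corollary yields $H^{>0}(\Mdmzero_{\kk(s)},\Omega^k_{\log,\kk(s)})=0$ and identifies $\nu_{\kk(s)}$ as a $\kk(s)$-linear isomorphism. The fibrewise dimension of $H^0$ thus equals $\rk_\ZZ \redOS^k(\mathcal{B}_{m-1})$ and is in particular constant in $s$. Cohomology and base change~\cite[Th\'eor\`eme~3.2.1]{EGAIII1} applied to the $\pi$-flat locally free sheaf $\Omega^k_{\log}$ then forces $R^{q}\pi_*\Omega^k_{\log}=0$ for $q>0$ and makes $\pi_*\Omega^k_{\log}$ a vector bundle on $\spec\ZZ$ whose formation commutes with arbitrary base change.

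Finally, both $\redOS^k(\mathcal{B}_{m-1})$ (with its reduced \emph{nbc}-monomial basis) and $H^0(\Mdmzero,\Omega^k_{\log})$ are finitely generated free $\ZZ$-modules of the same rank, and $\nu_\ZZ$ becomes an isomorphism after tensoring with every residue field, so an elementary Nakayama argument forces $\nu_\ZZ$ to be an isomorphism on the nose. For the identification with $\HH^\bullet$, the Hodge--de Rham spectral sequence on $\Mdmzero/\ZZ$ collapses onto its $q=0$ row by the vanishing above, so $\HH^k(\Omega^\bullet_{\log})$ equals the $k$-th cohomology of the complex of global log forms; but the image of $\nu_\ZZ$ is generated in degree one by closed forms of the shape $d\log f$, so the de Rham differential vanishes identically on $H^0(\Omega^\bullet_{\log})$, and the last isomorphism follows. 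The only genuine technical input is the fibrewise constancy needed to apply cohomology and base change, which is precisely what Corollary~\ref{cor:factorisation} supplies; everything else is formal.
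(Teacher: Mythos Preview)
Your proof is correct and follows essentially the same route as the paper: build the integral comparison map, apply Corollary~\ref{cor:factorisation} fibrewise, and lift to $\ZZ$ via a Nakayama/structure-theorem argument using that $\redOS^\bullet(\mathcal{B}_{m-1})$ is free. The only notable difference is in the second isomorphism: the paper lifts the fibrewise edge map $H^0(\Omega^k_{\log})\to \HH^k(\Omega^\bullet_{\log})$ by the same module-theoretic trick, whereas you argue directly that $d$ vanishes on $H^0(\Omega^\bullet_{\log})$ because it is generated in degree~$1$ by closed forms $d\log(x_i/x_j)$---this is a slightly cleaner way to handle that step.
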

\begin{proof}
    Since $\Mdmzero$ is smooth and projective over $\spec \ZZ$ and $\partial \Mdmzero$ is relative simple normal crossings over $\spec \ZZ$, the formation of the de Rham complex $\Omega_{\log}^\bullet$ and the Hodge filtration commutes with arbitrary base change. We refer to \cite[\href{https://stacks.math.columbia.edu/tag/0FM0}{Lemma 0FM0}]{stacks-project} for the nonlogarithmic case, and the argument carries over \textit{mutatis mutandis} in the logarithmic case. In particular, we have $\HH^k(\Mdmzero,\Omega^\bullet_{\log})\otimes \kk \simeq \HH^k(\Mdmzero \times \kk,\Omega^\bullet_{\log})$ for a field $\kk$.

The same can be said for $W_{L_m}$ and $\Omega_{W_{L_m}}^\bullet(\log D_{L_m}))$.
First, note that the map $\nu'$ from the proof of Theorem~\ref{thm:ring-iso} \[
\nu'\colon \redOS^\bullet(\mathcal{B}_{m-1})\to  H^0(W,\Omega_{W_{L_m}}^\bullet(\log D_{L_m}))\simeq H^0(\Mdmzero, \Omega^\bullet_{\log})\] can be constructed over $\ZZ$ and is stable under base extension. Applying Corollary~\ref{cor:factorisation}, we get that $\nu'$ is surjective upon base changing to any field. By structure theorem of finitely generated abelian groups, one deduces that $\nu'$ is surjective;  tensoring by $\QQ$ and noting the freeness of $\redOS(\mathcal{B}_{m-1})$, one also deduces that $\nu'$ has zero kernel. This uses the first isomorphism. For the second isomorphism, we similarly have natural homomorphism $H^0(\Mdmzero, \Omega^k_{\log}) \to \HH^k(\Mdmzero, \Omega^\bullet_{\log})$ arising from the Hodge--de Rham spectral sequence is also an isomorphism once base changing to a field. Now we can conclude by the exact same argument.
\end{proof}

\section{Vanishing theorems \textit{via} Kempf collapsing}\label{sec:kempf}

In this section, we adapt the approach of Kempf and Weyman to study collapsings of vector bundles. The idea of those authors is to combine syzygy calculation with considerations of positivity to analyse certain affine varieties. By contrast, our plan towards Theorem~\ref{thm:bwb-vanishing} is to exploit the fact that these affine varieties have rational singularities to obtain vanishing statements. Our convention for Schur powers follows Chapter~2 of \cite{Weyman03}.

\subsection{Recollections on the Kempf--Weyman geometric technique}\label{subsec:recollection-kempf-weyman}
For our applications the base of vector bundles will always be wonderful varieties, but we introduce the setup agnostically. Let $X$ be a projective variety, and $\sce$ a vector bundle over $X$ whose dual $\sce^\vee$ is globally generated. Denote by $V$ the affine space $H^0(X,\sce^\vee)^\vee$. The \emph{Kempf collapsing} of the vector bundle $\sce$ is the image $Y$ of the total space $\EE = \tot \sce$ under the projection $q\colon X \times V \to V$:
\[
\begin{tikzcd}
    {\EE} & { Y\subseteq V} \\
    X.
    \arrow["q",from=1-1, to=1-2]
    \arrow["p",from=1-1, to=2-1]
\end{tikzcd}
\]
Loosely speaking, $Y$ is the affine variety swept out by $\sce$ along $X$. When $X$ is smooth, the map $q$ gives a resolution of singularities of $Y$ as long as it is birational.
\begin{example}\label{ex:collapsing-of-line-bundle}
Let $\scl$ be a basepoint-free line bundle on a projective variety $X$. Put $V = H^0(X,\scl)^\vee$. Then the image of the Kempf collapsing \[
    q\colon \tot \scl^\vee \to Y\subseteq V
\] is the affine cone over the image of the evaluation map $\phi_{|\scl|}\colon X\to \P V$. Even in this basic case, the map $q$ can detect positivity properties of $\scl$.
For example, $\scl$ is ample if and only if $q$ is finite away from the origin $\{0\}$ of $V$. The forward implication follows from the finiteness of $\phi_{|\scl|}$ and a factorisation of Kempf collapsings: \[
    \begin{tikzcd}
        {\tot \scl^\vee} & {\tot \sco_{\P V}(-1) = \on{Bl}_0 V } & V \\
        X & {\P V.}
        \arrow["{\phi'}", from=1-1, to=1-2]
        \arrow[from=1-1, to=2-1]
        \arrow["\lrcorner"{anchor=center, pos=0.125}, draw=none, from=1-1, to=2-2]
        \arrow["{q_\P}", from=1-2, to=1-3]
        \arrow[from=1-2, to=2-2]
        \arrow["{\phi_{|\scl|}}", from=2-1, to=2-2]
\end{tikzcd}\]
Similarly, the backward implication can be deduced from fppf descending the finiteness of $\tot\scl^\vee \setminus X \to V\setminus \{0\}$ along the $\Gm$-torsor $V\setminus \{0\} \to \P V$.
\end{example}
\subsection{Kempf collapsings of tautological bundles}
\begin{warn}
For the remainder of the section, we work exclusively over the complex numbers.
\end{warn}
Fix a linear subspace $L\subseteq \CC^E$, in addition to a $k$-tuple of linear subspaces $\uJ = (J_1,\dots,J_k)$ of $\CC^E$. We consider the vector bundle given by $\mathscr{E}_\uJ = \scs_{J_1} \oplus \cdots
\oplus \scs_{J_k}$. It naturally embeds as a subbundle of the $T$-linearized trivial bundle
$\sco_{X_E} \otimes \mat_{k,E}$, where $\mat_{k,E}$ is the vector space of $k$-by-$E$
matrices with $T$ acting by diagonal matrices from the right by $t.M = M\cdot\on{diag}(t^{-1})$. Denote by $\EE_{L,\uJ}$ the total
space of the pullback $\mathscr{E}_\uJ |_{W_L}$ and $\hat Y_{L,\uJ}$ the Kempf collapsing of $\mathscr{E}_\uJ|_{W_L}$. When $L = \CC^E$, we simply write $\EE_\uJ = \EE_{L,\uJ}$ and $\hat Y_\uJ = \hat Y_{L,\uJ}$.

To the pair $(L,\uJ)$, we can also associate the $(k+1)$-tuple $L\sqcup\uJ$, given by inserting $L$ into the beginning of $\uJ$. We will be concerned with the following two Kempf collapsings,
\begin{equation}\label{eq:Kempf}
\begin{tikzcd}
    {\EE_{L,\uJ}} & { \hat Y_{L,\uJ}\subseteq \mat_{k,E}} \\
    W_L,
    \arrow["q_W",from=1-1, to=1-2]
    \arrow["p_W",from=1-1, to=2-1]
\end{tikzcd}\qquad
\begin{tikzcd}
    {\EE_{L\sqcup\uJ}} & { \hat Y_{L\sqcup\uJ}\subseteq \mat_{k+1,E}} \\
    X_E.
    \arrow["q_X",from=1-1, to=1-2]
    \arrow["p_X",from=1-1, to=2-1]
\end{tikzcd}
\end{equation}

On the right-hand side the image of the collapsing is $T$-invariant. More precisely, $\hat Y_{L\sqcup \uJ}$ is the
$T$-orbit closures of $(k+1)$-by-$E$ matrices of the form $(\ell,\ell_1,\dots,\ell_k)$, where $\ell \in L$ and
$\ell_i \in J_i$. Under the quotient $\mat_{k+1,E} \ratto (\P^k)^E$, the affine variety $\hat Y_{L\sqcup \uJ}$ is realised as the
multicone of the subvariety $Y_{L\sqcup\uJ}\subseteq (\P^k)^E$ given by \[
Y_{L\sqcup\uJ} = \on{cl.}\left\{ \prod_{j \in E} [\ell^j:\ell_1^j:\cdots:\ell_k^j] \mid \ell \in
L^\circ,\ell_i \in J_i^\circ \right\},\quad \text{where $L^\circ = L \cap (\CC^\times)^E$.}
\]
Recall that for a normal variety $Y$ over $\CC$, a \textit{rational resolution} is a resolution of singularities $j\colon Y'\to Y$ such that $R^i j_\ast\sco_{Y'} = 0$ for all $i > 0$. The variety $Y$ is said to have \textit{rational singularities} if it admits a rational resolution. Once $Y$ has rational singularities, then any resolution of singularities of $Y$ is rational.

\begin{lemma}\label{lem:YLEratsing}
The variety $Y_{L\sqcup\uJ}$ is normal, Cohen-Macaulay, and has rational singularities.
\end{lemma}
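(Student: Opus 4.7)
The plan is to exhibit the Kempf collapsing $q_X\colon \EE_{L\sqcup \uJ} \to \hat Y_{L\sqcup \uJ}$ from Diagram~\eqref{eq:Kempf} as a rational resolution of singularities of the affine variety $\hat Y_{L\sqcup \uJ}$, and then transfer the conclusion to the projective multicone $Y_{L\sqcup \uJ}$. Since $\EE_{L\sqcup \uJ}$ is a vector bundle over the smooth projective $X_E$, its total space is smooth.

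First I would check that $q_X$ is proper and birational onto its image. Properness is immediate, for $q_X$ factors through the projection $X_E\times \mat_{k+1,E}\to \mat_{k+1,E}$ with $X_E$ projective. For birationality, over a generic point $(v,v_1,\dots,v_k)$ of $\hat Y_{L\sqcup \uJ}$, lying in the dense open subset where every column is in the torus, the class $\bar t\in \P T$ with $v\in t^{-1}L$ and $v_i\in t^{-1}J_i$ is uniquely determined: the generic stabiliser of $L\oplus J_1\oplus\cdots\oplus J_k$ under the diagonal $T$-action reduces to the scalar $\Gm$.

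The critical step is to establish that $\hat Y_{L\sqcup \uJ}$ has rational singularities. Here one must avoid circularity, since the overall strategy is to deduce Theorem~\ref{thm:bwb-vanishing} from rational singularities \emph{via} the Kempf--Weyman technique; running that technique in reverse to prove rational singularities would presuppose the very vanishing theorem one wants to obtain. One approach is to appeal to the work of Berget--Fink cited in the introduction, who study matrix orbit closures of linear subspaces and establish rational singularities for them in characteristic zero. A self-contained alternative is to prove $R^i q_{X,*}\sco_{\EE_{L\sqcup \uJ}}=0$ for $i>0$ by induction on $\#E$, using the flat deletion map $f\colon X_E\to X_{E\setminus n}$ of Proposition/Definition~\ref{propdet:deletion} to reduce to a smaller ground set plus elementary $\P^1$-fibre computations, in the spirit of Section~\ref{sec:SQproof}. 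This uses Proposition~\ref{prop:restriction-to-subvar} to control the restriction of $\mathscr{E}_{L\sqcup \uJ}$ to the fibres of $f$, so that only vanishings on rational curves are needed at each inductive stage.

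Finally, the conclusion for $Y_{L\sqcup \uJ}$ follows from the corresponding conclusion for $\hat Y_{L\sqcup \uJ}$. Indeed, $\hat Y_{L\sqcup \uJ}$ is the multicone over $Y_{L\sqcup\uJ}$ for the columnwise $(\Gm)^E$-action, and normality, the Cohen--Macaulay property, and rational singularities descend from an affine multicone to its projective base under the associated GIT quotient \emph{via} Boutot's theorem and standard facts about cones over projectively normal varieties. The main obstacle is the middle step: establishing rational singularities without invoking Theorem~\ref{thm:bwb-vanishing}. Running the inductive argument cleanly requires handling the loop, coloop, and generic cases separately and orchestrating Leray spectral sequences so that at each stage only the elementary fibre-level vanishings are brought to bear.
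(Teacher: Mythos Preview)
Your approach has a genuine gap and diverges sharply from the paper's argument. The paper's proof is two lines: it quotes Li's computation showing that the multidegree of $Y_{L\sqcup\uJ}\subseteq (\P^k)^E$ is multiplicity-free (i.e., all coefficients are $0$ or $1$), and then invokes Brion's criterion, which says that any multiplicity-free subvariety of a product of projective spaces is automatically normal, Cohen--Macaulay, and has rational singularities. No Kempf collapsing, no induction, no birationality claim is needed.

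Your proposed route has several problems. First, the birationality of $q_X\colon \EE_{L\sqcup\uJ}\to \hat Y_{L\sqcup\uJ}$ is simply false in general: nothing in the setup of Lemma~\ref{lem:YLEratsing} requires $L$ or the $J_i$ to satisfy Condition~\eqref{eq:ll}, and if, say, some $J_i$ is a coordinate subspace then the fibre of $q_X$ over a generic image point contains a positive-dimensional torus. This is exactly why the paper later introduces the $\beta$-trick (Lemma~\ref{lem:beta-trick}) to \emph{force} birationality in the proof of Theorem~\ref{thm:bwb-vanishing}. Second, even granting birationality, to conclude that $\hat Y_{L\sqcup\uJ}$ has rational singularities you must already know it is normal; you do not establish this independently. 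Third, your ``self-contained'' inductive approach to $R^i q_{X,\ast}\sco_{\EE}=0$ amounts to proving $H^i(X_E,\sym\mathscr{E}_{L\sqcup\uJ}^\vee)=0$ for $i>0$, which is precisely the content of Lemma~\ref{lem:expected-dim-vanishing} and the core of Theorem~\ref{thm:bwb-vanishing}---the very result this lemma is meant to feed into. The appeal to Berget--Fink is closer in spirit to what works, but the actual input the paper uses is the multiplicity-free criterion, which handles normality, the Cohen--Macaulay property, and rational singularities in one stroke, directly on the projective variety $Y_{L\sqcup\uJ}$.
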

\begin{proof}
By \cite[Theorem~1.1]{BinglinLi2018}, the projective variety $Y_{L\sqcup\uJ}$ is multiplicity-free (the exact formula is not necessary for us, except that the weights are $0$-$1$). The multiplicity criterion of Brion~\cite[Theorem~1]{Brion2003}, see also \cite[Theorem~4.3]{BergetFinkKtheory}, yields the assertion.
\end{proof}

We will use this to show:
\begin{lemma}\label{lem:YLuJratsing}
Assume that $L\subseteq \CC^E$ is a linear subspace satisfying Condition~\eqref{eq:ll}. Then the variety $\hat Y_{L,\uJ}$ is normal and has rational singularities.
\end{lemma}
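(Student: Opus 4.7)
The plan is to realise $\hat Y_{L,\uJ}$ as the scheme-theoretic fibre of a $T$-equivariant morphism out of the ambient variety $\hat Y_{L\sqcup\uJ}$, and then to transfer rational singularities from the latter using $T$-equivariance together with Elkik's openness theorem. As a first step, I upgrade Lemma~\ref{lem:YLEratsing} from the projective $Y_{L\sqcup\uJ}$ to its affine multi-cone: the variety $\hat Y_{L\sqcup\uJ}\subseteq \mat_{k+1,E}$ is itself normal, Cohen--Macaulay, and has rational singularities. Indeed, $\hat Y_{L\sqcup\uJ}$ is a $T$-orbit closure whose coordinate ring is multiplicity-free as a $T$-module, by the same calculation as in \cite[Theorem~1.1]{BinglinLi2018}, so the affine version of Brion's criterion \cite[Theorem~1]{Brion2003} applies.

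For the slicing, consider the $T$-equivariant first-row projection $\pi\colon \hat Y_{L\sqcup\uJ} \to \A^E$. It is surjective, and $T$ acts freely on the dense orbit $T\cdot \bar 1 = T\subseteq \A^E$. The claim is that $\hat Y_{L,\uJ}$ is the scheme-theoretic fibre $\pi^{-1}(\bar 1)$. Set-theoretically this follows from the dense-orbit computation: forcing $t^{-1}\ell = \bar 1$ at a generic point $(t^{-1}\ell, t^{-1}\ell_1,\dots,t^{-1}\ell_k)$ of $\hat Y_{L\sqcup\uJ}$ gives $\ell = t \in L^\circ$, after which $(t^{-1}\ell_1,\dots,t^{-1}\ell_k)$ sweeps the dense part of $\hat Y_{L,\uJ}$. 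Scheme-theoretically, Proposition~\ref{prop:degeneracy-locus}(i) identifies $W_L \subseteq X_E$ as the regular zero locus of the image of $\bar 1 \in H^0(\sco_{X_E}^E)$ in $H^0(\scq_L)$, and pulling back via $p_X$ realises $\EE_{L,\uJ}$ as the scheme-theoretic preimage $(\pi \circ q_X)^{-1}(\bar 1)$ inside $\EE_{L\sqcup\uJ}$. By $T$-equivariance, all fibres of $\pi$ over $T$ have the same dimension, so miracle flatness, applied to $\pi^{-1}(T)\to T$ with its Cohen--Macaulay source and regular target, gives flatness of $\pi$ over $T$. Consequently $\pi^{-1}(\bar 1)$ is cut out by a regular sequence of length $n+1$ on $\hat Y_{L\sqcup\uJ}$, hence is Cohen--Macaulay; being additionally generically reduced (it matches the dense locus of $\hat Y_{L,\uJ}$), it is reduced and coincides with $\hat Y_{L,\uJ}$ as schemes.

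Finally, I transfer rational singularities. Since $\hat Y_{L\sqcup\uJ}$ has rational singularities and these are preserved under base change to the generic point of a regular base, the generic fibre of $\pi$ has rational singularities. Elkik's openness theorem then yields a nonempty open $U\subseteq \A^E$ over which all fibres of $\pi$ have rational singularities; by $T$-equivariance of $\pi$, this locus $U$ is $T$-stable, and since it is nonempty it must contain the entire dense orbit $T\subseteq \A^E$, in particular the point $\bar 1$. Therefore $\pi^{-1}(\bar 1) = \hat Y_{L,\uJ}$ has rational singularities, and normality follows as part of the conclusion. The main technical obstacle is this transfer step: justifying flatness of $\pi$ in a neighbourhood of $\bar 1$ and verifying the precise applicability of Elkik's openness for the fibres of $\pi$ over the torus.
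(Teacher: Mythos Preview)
Your fibre-slicing strategy is sound, and the parts you flag as worrisome (miracle flatness over $T$, Elkik's openness) actually go through. The genuine gap is earlier, in your first paragraph: the claim that the coordinate ring of $\hat Y_{L\sqcup\uJ}$ is multiplicity-free as a $T$-module is false. For instance, take $L=J_1=\CC^E$; then $\hat Y_{L\sqcup\uJ}=\mat_{2,E}$, and already for $|E|=2$ the monomials $x_{10}x_{21}$ and $x_{11}x_{20}$ share a $T$-weight. Brion's criterion, and the computation in \cite{BinglinLi2018}, concern the \emph{cohomology class} of the projective variety $Y_{L\sqcup\uJ}\subset(\P^k)^E$ being a $0$-$1$ combination of Schubert classes, not any module-theoretic multiplicity-freeness over the torus. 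There is no off-the-shelf ``affine Brion'' that upgrades Lemma~\ref{lem:YLEratsing} from $Y_{L\sqcup\uJ}$ to its multi-cone $\hat Y_{L\sqcup\uJ}$, so this step needs a separate argument.

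The paper sidesteps the multi-cone entirely. It observes that $\hat Y_{L,\uJ}$ is isomorphic to an open affine chart of the \emph{projective} variety $Y_{L\sqcup\uJ}$: writing $U\subset(\P^k)^E$ for the locus where all first coordinates $x_j$ are nonzero, dehomogenization $y_{ij}/x_j\mapsto y_{ij}$ gives $U\cap Y_{L\sqcup\uJ}\cong \hat Y_{L,\uJ}$. Since rational singularities are Zariski-local, Lemma~\ref{lem:YLEratsing} finishes immediately. Your own setup already contains this observation: the free $T$-action on $\pi^{-1}(T)$ yields $\pi^{-1}(T)\cong T\times \hat Y_{L,\uJ}$, and the quotient by $T$ is nothing but the restriction of the multi-cone map $\hat Y_{L\sqcup\uJ}\setminus 0\to Y_{L\sqcup\uJ}$ over $U$. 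That lands you directly in $U\cap Y_{L\sqcup\uJ}$ without ever needing rational singularities of $\hat Y_{L\sqcup\uJ}$ or Elkik's theorem.
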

\begin{proof}
Denote by $S = \tot \scs_{L}$. By assumption and Proposition~\ref{prop:degeneracy-locus}.(i), the wonderful variety $W_L$ is the degeneracy locus of the section $1_T \in \Gamma(\scq_L)$. Base changing the Cartesian diagram in Proposition~\ref{prop:degeneracy-locus}.(i) by $\EE_{\uJ} \to X_E$, we have the following diagram where all squares
are Cartesian:
\[
    \begin{tikzcd}
        {\EE_{L\sqcup\uJ} = S \times_{X_E} \EE_\uJ } & {\A^E_{X_E} \times_{X_E} \EE_\uJ} & {\mat_{k+1,E}} & {\A^E} \\
        {\EE_{L,\uJ} = W_L \times_{X_E} \EE_\uJ} & {\EE_\uJ} & {\mat_{k,E}} & 1
        \arrow["{h_2}"', from=1-1, to=1-2]
        \arrow["{q_X}"', bend left=20, from=1-1, to=1-3]
        \arrow["{h_1}"', from=1-2, to=1-3]
        \arrow["{\on{pr}_1}"', from=1-3, to=1-4]
        \arrow[from=2-1, to=1-1]
        \arrow["{h_2'}"', from=2-1, to=2-2]
        \arrow["{q_W}"', bend right=20, from=2-1, to=2-3]
        \arrow["{1\times_{X_E}\EE_\uJ}"', from=2-2, to=1-2]
        \arrow["{h_1'}"', from=2-2, to=2-3]
        \arrow[from=2-3, to=1-3]
        \arrow[from=2-3, to=2-4]
        \arrow[from=2-4, to=1-4]
\end{tikzcd}\]
Here, $\on{pr}_1$ projects a $(k+1)$-by-$E$ matrix to its first row. The map $h_1$ is the Kempf collapsing map of the bundle $\sco_{X_E}^E \oplus \mathscr{E}_{\uJ}$, and $h_2$ is induced by the inclusion $\mathscr{E}_{L\sqcup\uJ} \subseteq
\sco_{X_E}^E \oplus \mathscr{E}_{\uJ}$. The image of $q_X$ is $\hat Y_{L\sqcup\uJ}$, and the image of $q_W$ is $\hat Y_{L,\uJ}$.

Now, we coordinatise $\mat_{k+1,E} = \spec A$ with $A=\CC[x_0,\dots,x_n]\otimes
\CC[y_{ij},i \in [k],j\in E]$ so that the first row of $\mat_{k+1,E}$ has the entries $x_i$ and the remaining
$\mat_{k,E}$ factor has entries $y_{ij}$. Let $0$ be the apex of $\hat Y_{L,\uJ}$. There is a quotient map
$\pi\colon\hat Y_{L\sqcup\uJ} \setminus 0 \to Y_{L\sqcup\uJ}$. We claim that $\pi$ restricts to an isomorphism
$\pi'\colon \hat Y_{L,\uJ} \to U \cap Y_{L,\uJ}$, where $U$ is the basic open chart of $(\P^k)^E$ given by demanding $x_i \ne
0$ for all $i$. To see this, we first note the Cartesian diagram shows the ideal $\on{I}(\hat Y_{L,\uJ})\subseteq \CC[y_{ij},i \in [k],j\in E]$ is given dehomogenisation $\on{I}(\hat Y_{L\sqcup \uJ})^{\on{dehom}}$ of the ideal $\on{I}(\hat Y_{L\sqcup \uJ})\subseteq A$ with respect to all $x_i$. Moreover, $\pi'$ is induced by the isomorphism that identifies
dehomogenisation and graded localisation; \[
    k[U\cap Y_{L\sqcup\uJ}]=(A/\on{I}(\hat Y_{L\sqcup \uJ}))_{(x_0\cdots x_n)} \xrightarrow{\simeq}
    \CC[y_{ij},i \in [k],j\in E]/\on{I}(\hat Y_{L\sqcup \uJ})^{\on{dehom}}; \quad y_{ij}/x_j \mapsto y_{ij},
\] where the $(-)_{(x_0\cdots x_n)}$ is the $\ZZ^E$-multidegree-$(0,\dots,0)$ piece of the localization away from $(x_0\cdots x_n)$.

Now, by Lemma~\ref{lem:YLEratsing}, we know that $Y_{L\sqcup\uJ}$ is normal and has rational singularities, both of which are local properties. Hence the open subvariety
$U\simeq \hat Y_{L,\uJ}$ is normal and has rational singularities.
\end{proof}

Set $r = \dim L$ and $r_i = \dim J_i$ for $i\in [k]$. A consequence of $\hat Y_{L,\uJ}$ having rational singularities is that we get cohomology vanishing as soon as $\hat Y_{L,\uJ}$ has expected dimension.

\begin{lemma}\label{lem:expected-dim-vanishing}
Assume $L$ satisfies Condition~\eqref{eq:ll} and $\hat Y_{L,\uJ}$ has dimension $r-1+ \sum  r_i$. Then we have
\begin{equation}
    H^i(W_L, \sym^j \mathscr{E}_\uJ^\vee) = 0,\quad\text{for all $i>0, j\ge 0$},
\end{equation} and $H^0(W_L,\sym \mathscr{E}_\uJ^\vee)$ is the coordinate ring of $\hat Y_{L,\uJ}$.
\end{lemma}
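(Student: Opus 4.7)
The plan is to apply the Kempf--Weyman geometric technique via the diagram in Eq.~\eqref{eq:Kempf}: compute $H^\bullet(\EE_{L,\uJ},\sco_{\EE_{L,\uJ}})$ in two ways, using the bundle projection $p_W$ and the collapsing $q_W$. Since $p_W$ is an affine morphism---being the structure map of a vector bundle---higher direct images vanish and $p_{W,*}\sco_{\EE_{L,\uJ}} = \sym\mathscr{E}_\uJ^\vee|_{W_L}$. The latter is $\Gm$-graded by the fibrewise scaling action, so the Leray spectral sequence for $p_W$ identifies $H^i(\EE_{L,\uJ},\sco_{\EE_{L,\uJ}}) = \bigoplus_{j\ge 0} H^i(W_L, \sym^j\mathscr{E}_\uJ^\vee)$. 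Thus it suffices to compute the cohomology of $\sco_{\EE_{L,\uJ}}$ on the nose.

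To this end, I will argue that $q_W$ is a rational resolution of $\hat Y_{L,\uJ}$, i.e.\ that $Rq_{W,*}\sco_{\EE_{L,\uJ}} = \sco_{\hat Y_{L,\uJ}}$. Four ingredients are at play. First, $\EE_{L,\uJ}$ is smooth, being the total space of a vector bundle over the smooth wonderful variety $W_L$. Second, $q_W$ is proper: it factors through the closed embedding $\EE_{L,\uJ}\hookrightarrow W_L\times\mat_{k,E}$ followed by projection to $\mat_{k,E}$, with $W_L$ projective. Third, $\hat Y_{L,\uJ}$ has rational singularities by Lemma~\ref{lem:YLuJratsing}. Fourth---and this is the key point---$q_W$ is birational onto its image. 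Once all four are in place, the standard rational-resolution package (noting that rational singularities can be detected by \emph{any} proper birational morphism from a smooth source) yields $Rq_{W,*}\sco_{\EE_{L,\uJ}} = \sco_{\hat Y_{L,\uJ}}$. Since $\hat Y_{L,\uJ}\subseteq\mat_{k,E}$ is affine, we then get $H^i(\hat Y_{L,\uJ},\sco_{\hat Y_{L,\uJ}}) = 0$ for $i>0$ and $H^0 = \CC[\hat Y_{L,\uJ}]$, from which the claimed vanishing and identification follow by combining with the $p_W$-side.

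The main obstacle is thus birationality of $q_W$. The dimension hypothesis $\dim\hat Y_{L,\uJ} = r-1+\sum r_i = \dim\EE_{L,\uJ}$ immediately forces $q_W$ to be generically finite, so the remaining task is to verify that the generic fiber is a single reduced point. The direct approach: a generic point of $\hat Y_{L,\uJ}$ is a matrix of the form $(t^{-1}j_1,\dots,t^{-1}j_k)$ with $\bar t\in\P L^\circ$ and $j_i\in J_i^\circ$, and Condition~\eqref{eq:ll} together with generic position of the $j_i$ should force this presentation to be unique modulo the $\Gm$-quotient defining $\P T$. Alternatively, one may exploit the Cartesian square appearing in the proof of Lemma~\ref{lem:YLuJratsing}, which exhibits $\EE_{L,\uJ}$ as a base change of $\EE_{L\sqcup\uJ}$ and $\hat Y_{L,\uJ}$ as an open dehomogenisation of $\hat Y_{L\sqcup\uJ}$; birationality for $q_W$ would then descend from the analogous and more $T$-symmetric property for $q_X$, where the multiplicity-free / orbit structure invoked in Lemma~\ref{lem:YLEratsing} is more readily exploited.
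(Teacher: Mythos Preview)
Your approach is correct and essentially identical to the paper's: both establish that $q_W$ is a rational resolution and then read off cohomology via the affine projection to $W_L$ (which the paper packages as a citation to Weyman's Theorem~5.1.2). For the birationality step, the paper's execution is slightly cleaner than your sketch: it observes that the fiber of $q_W$ over any $m\in\mat_{k,E}$ is the closure of a \emph{linear} subspace of $\P L$ (the condition that each row of $m\cdot\mathrm{diag}(x)$ lie in the corresponding $J_i$ being linear in $x$), hence connected, which together with generic finiteness forces the generic fiber to be a single reduced point.
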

\begin{proof}
Denote $\EE = \EE_{L,\uJ}$ and $\hat Y = \hat Y_{L,\uJ}$ for simplicity. When $\hat Y$ has the expected dimension, by Chevalley's theorem $q_W$ is generically finite. But the fibre $\EE_m$ of $q_W$ over the point $m \in \mat_{k,E}$ is given by the linear space closure \[
    \EE_{m} = \on{cl.} \{ ([x],y) \in \P L^\circ \times \prod_i J_i \mid mx = y \} \subseteq W_L
    \times \mat_{k,E}.
\] Since $L$ is not contained in any coordinate hyperplanes, a fibre $\EE_{m}$ for a general $m\in \mat_{k,E}$ is connected and nonempty, so, by finiteness, a single point. Therefore, $q_W$
is birational. Since $q_W$ is a closed embedding followed by projection away from $W_L$, it is proper and
therefore a desingularisation. Since $\hat Y$ has rational singularities by Lemma~\ref{lem:YLuJratsing}, the map $q_W$
is necessarily a rational resolution, meaning $R^{>0} q_{W *}\sco_{\EE} = 0$. But now the first assertion follows from
\cite[Theorem~5.1.2(b)]{Weyman03}, which gives an identification \[
    H^i(W_L, \sym \mathscr{E}_\uJ^\vee) = R^i q_{W *}\sco_{\EE},
\] for any $i$.  By \cite[Theorem~5.1.2(a)]{Weyman03}\@\xspace, we know that the graded algebra $H^0(W_L, \sym \mathscr{E}_\uJ^\vee)$ is the normalization of the coordinate ring of $k[\hat Y]$. Since $\hat Y$ is normal, the last assertion follows.
\end{proof}
Next, we need a general result that helps removing the assumption on dimension of $\hat Y_{L,\uJ}$.
\begin{lemma}\label{lem:beta-trick}
Let $\mathscr{E}$ be a vector bundle on $W_L$ whose dual is globally generated. Let $\EE^\flat$ be the total space of $\mathscr{E}\oplus\sco(-\beta)$. Then, the Kempf collapsing \[
    q_{W}\colon \EE^\flat \to Y^\flat \subseteq V \times \A^E
\] is birational into its image $Y^\flat$, where $V$ is the affine space $H^0(W_L,\mathscr{E}^\vee)^\vee$.
\end{lemma}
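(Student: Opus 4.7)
The plan is to reduce birationality of $q_W$ to birationality of an auxiliary Kempf collapsing involving only the line-bundle summand $\sco(-\beta)|_{W_L}$, and then to read that off from the Cremona involution on $X_E$. First I will factor $q_W$ as the closed embedding $\EE^\flat \hookrightarrow W_L \times V \times \A^E$ followed by projection onto $V \times \A^E$, so that proving birationality amounts to showing that for a generic $(v,e) \in Y^\flat$ the preimage is a single reduced point. At a point $w \in W_L$, the fibre map $\EE^\flat_w = \mathscr{E}_w \oplus \sco(-\beta)_w \to V \times \A^E$ is the direct sum of two linear evaluations: $\mathscr{E}_w \hookrightarrow V$ is injective because $\mathscr{E}^\vee$ is globally generated, and the one-dimensional evaluation $\sco(-\beta)_w \hookrightarrow \A^E$ is nonzero for $w$ in the open stratum $\P L^\circ \subseteq W_L$. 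Hence fibrewise injectivity holds on a dense open of $W_L$, and it remains only to show that for a generic $(v,e) \in Y^\flat$ the second coordinate $e$ singles out a unique $w \in W_L$.

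For this, I will examine the auxiliary collapsing $q_\beta \colon \tot \sco(-\beta)|_{W_L} \to \A^E$ obtained by projecting $q_W$ to the $\A^E$-factor. Unwinding the definition, $q_\beta$ is defined by the $E$ sections $\crem^* x_i \in H^0(X_E, \sco(\beta))$ restricted to $W_L$; on the open torus $\P T \subseteq X_E$ these sections send a point $[x_0 : \cdots : x_n]$ to the ray through $(1/x_0, \dots, 1/x_n)$, which is nothing but the standard Cremona involution on $\P T$. Since Cremona is an automorphism of $X_E$, its restriction to $\P L^\circ \subseteq W_L$ is an isomorphism onto its image; therefore $q_\beta$ restricts to an isomorphism onto its image on $\tot \sco(-\beta)|_{\P L^\circ}$ minus the zero section, and in particular $q_\beta$ is birational onto its image. (If one prefers a more structural viewpoint, Cremona identifies $q_\beta$ with the collapsing of $\sco(-\alpha)|_{W_{L^\perp}}$, which factors as an iterated blow-up followed by Example~\ref{ex:collapsing-of-line-bundle}, but for the statement at hand the direct torus computation suffices.)

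Assembling the two steps, for a generic $(v,e) \in Y^\flat$ the coordinate $e$ lies in the open locus of $\im q_\beta$ over which $q_\beta$ is an isomorphism onto its image, and hence $e$ determines a unique $w \in W_L$ such that $e$ is in the image of $\sco(-\beta)_w \hookrightarrow \A^E$; combined with fibrewise injectivity of the evaluation $\EE^\flat_w \hookrightarrow V \times \A^E$ at this $w$, the preimage $q_W^{-1}(v,e)$ consists of a single point, and $q_W$ is birational onto $Y^\flat$. The main obstacle is recognising the $\A^E$-component of $q_W$ as a Cremona map; once this identification is made, the rest is elementary linear algebra on each fibre together with density of $\P L^\circ$ in $W_L$.
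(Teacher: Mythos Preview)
Your proof is correct and follows essentially the same approach as the paper. The paper's one-sentence argument simply writes down a rational inverse, implicitly using that the $\A^E$-coordinate coming from the $\sco(-\beta)$ summand recovers the base point in $\P L^\circ \subseteq W_L$; your version spells this out by identifying the $\sco(-\beta)$-collapsing with the Cremona map on the open torus and then invoking fibrewise injectivity from global generation of $\mathscr{E}^\vee$.
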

\begin{proof}
We simply produce a rational inverse with the map $(\ell,e)\mapsto ([\ell],(\ell,e))$ for $\ell\in \P L^\circ$ and $e \in \mathscr{E}|_{[\ell]}$.
\end{proof}
\begin{proof}[Proof of Theorem~\ref{thm:bwb-vanishing}]
Let $d_i$ by the number of parts of the partition $\lambda_i$. We consider the tuple
\begin{equation}\label{eq:big-tuple}
    \uJ = (J_1,\dots,J_1,\dots,\underbrace{J_i,\dots,J_i}_{d_i},\dots,J_k,\dots,J_k)
\end{equation}
In other words, we take on $d_i$ copies of $\scs_{J_i}$ for each $i\in [k]$. Since we are working in characteristic $0$, Cauchy's formula gives a direct sum decomposition;
see \cite[Corollary~2.3.3]{Weyman03}, \[
    \sym (\scs^\vee_{J})^{\oplus d} = \bigoplus_{\lambda} \schur^{\lambda} \scs^\vee_{L}
    \otimes \schur^{\lambda} \CC^d
\] where $\lambda$ runs over all partitions with at most $d$ parts.  Applying this to
$\sym (\scs_{J_i}^\vee)^{\oplus d_i}$, one obtains \[
    \sym \mathscr{E}^\vee_\uJ = \bigoplus_{\mu_1,\dots,\mu_k}
    C^{d_1,\dots,d_k}_{\mu_1,\dots,\mu_k}\otimes \schur^{\mu_1} \scs_{J_1}^\vee \otimes
    \cdots \schur^{\mu_k} \scs_{J_k}^\vee
\] where $C^{d_1,\dots,d_k}_{\mu_1,\dots,\mu_k} = \bigotimes_{i} \schur^{\mu_i} \CC^{d_i}$, which is nonzero when $\mu_i$ has at most $d_i$ parts.

Consider $(k+1)$-tuple $\uJ^\flat = \uJ\sqcup\CC.1_T$ by append the span of the all-one vector $1_T$. By construction, we have $\mathscr{E}_{\uJ^\flat} = \mathscr{E}_{\uJ}\oplus \sco(-\beta)$. Therefore, it is enough to show $\sym \mathscr{E}_\uJ^\vee|_{W_L}$ has vanishing higher
cohomology. But Lemma~\ref{lem:beta-trick} asserts that the Kempf collapsing map is birational. The result then follows from the assumption on $L$ and the first assertion in Lemma~\ref{lem:expected-dim-vanishing}.
\end{proof}
\subsection{}
The proof of our second vanishing result here (Theorem~\ref{thm:manivel-vanishing}) is surprisingly quick. We start with a counterpart of Lemma~\ref{lem:expected-dim-vanishing}:
\begin{lemma}\label{lem:griffiths-vanishing}
Consider a smooth projective variety $X$ over $\CC$ and a vector bundle $\mathscr{E}$ over $X$ whose dual $\mathscr{E}^\vee$ is globally generated. Let $V = H^0(X,\sce^\vee)^\vee$ and  $Y \subseteq V$ be the image of the Kempf collapsing $q_X\colon \EE \to V$, where $\EE=\tot\sce$. Suppose $q_X$ is generically finite, then \[
    H^i(\omega_{X} \otimes \det \sce^\vee \otimes \sym \sce^\vee) = 0
\] for $i>0$.
Moreover, if $Y$ has rational singularity and $q_X$ is birational, the relative trace map $\tr_{\EE/Y}$ gives an isomorphism \[
    \tr_{\EE/Y}\colon H^0(\omega_{X} \otimes \det \sce^\vee \otimes \sym \sce^\vee) \to \omega_{Y},
\] as $k[Y]$-modules, where $\omega_Y$ is the dualising sheaf of $Y$.
\end{lemma}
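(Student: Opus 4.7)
The plan is to reduce both assertions to the Grauert--Riemenschneider vanishing theorem and (for the second statement) Kempf's duality for rational singularities, via the observation that the sheaf $\omega_X\otimes \det \sce^\vee\otimes \sym\sce^\vee$ is nothing but the pushforward of the canonical bundle on the total space $\EE$.

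First, I would compute that $p\colon \EE\to X$, being the total space of a vector bundle, is an affine morphism with relative tangent bundle $p^\ast\sce$. Hence $\omega_{\EE/X}\simeq p^\ast \det\sce^\vee$ and therefore $\omega_\EE\simeq p^\ast(\omega_X\otimes\det\sce^\vee)$. Applying $p_\ast$ and invoking the projection formula together with $p_\ast\sco_\EE=\sym\sce^\vee$ gives
\[
    p_\ast\omega_\EE\;\simeq\;\omega_X\otimes\det\sce^\vee\otimes\sym\sce^\vee.
\]
Since $p$ is affine, the Leray spectral sequence degenerates and identifies $H^i(X,p_\ast\omega_\EE)$ with $H^i(\EE,\omega_\EE)$ for all $i$.

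Next, the closed embedding $\EE\hookrightarrow X\times V$ composed with the projection $X\times V\to V$ shows that $q_X$ is proper (since $X$ is projective). By hypothesis $q_X$ is generically finite, so $\dim \EE=\dim Y$. Grauert--Riemenschneider vanishing applied to the proper morphism $q_X$ from the smooth variety $\EE$ then yields
\[
    R^{i}q_{X\ast}\omega_\EE=0 \quad\text{for all } i>0.
\]
Because $Y\subseteq V$ is affine, one concludes $H^{i}(\EE,\omega_\EE)=H^i(Y,q_{X\ast}\omega_\EE)=0$ for $i>0$, which combined with the identification from the previous paragraph gives the first assertion.

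For the second statement, under the additional assumption that $q_X$ is birational and $Y$ has rational singularities, I would invoke Kempf's theorem (equivalently, Grothendieck duality applied to $Rq_{X\ast}\sco_\EE=\sco_Y$) to upgrade the trace morphism $\tr_{\EE/Y}\colon Rq_{X\ast}\omega_\EE\to\omega_Y$ to a quasi-isomorphism; combined with the vanishing just proved, the trace yields an honest isomorphism $q_{X\ast}\omega_\EE\xrightarrow{\simeq}\omega_Y$ of $\sco_Y$-modules. Taking global sections on the affine variety $Y$ and using the previous identifications produces the claimed isomorphism
\[
    H^0(X,\omega_X\otimes\det\sce^\vee\otimes\sym\sce^\vee)\;\xrightarrow{\tr_{\EE/Y}}\;\omega_Y.
\]
The main subtlety I anticipate is verifying that the isomorphism $Rq_{X\ast}\omega_\EE\simeq\omega_Y$ under rational singularities is indeed realised by the trace map (as opposed to some abstract isomorphism), which one handles via the functoriality of Grothendieck duality starting from the trace $Rq_{X\ast}\sco_\EE\to\sco_Y$; everything else is a straightforward chain of standard identifications.
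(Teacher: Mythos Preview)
Your proof is correct and follows essentially the same approach as the paper: identify $\omega_\EE \simeq p^\ast(\omega_X\otimes\det\sce^\vee)$, recognise the cohomology groups in question as $H^i(\EE,\omega_\EE)$ via the projection formula and affineness of $p$, then apply Grauert--Riemenschneider to $q_X$ and Kempf's criterion for the trace map. The only cosmetic difference is that the paper cites \cite[Theorem~5.1.2(b)]{Weyman03} for the identification $R^i q_{X\ast}\omega_\EE = H^i(\omega_X\otimes\det\sce^\vee\otimes\sym\sce^\vee)$, whereas you unpack this step by hand using the Leray spectral sequences for $p$ and $q_X$.
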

\begin{proof}
Denote by $p_X\colon\EE \to X$ the structure map. The canonical bundle of $\EE$ is given by the pullback $\omega_{\EE} =
p_X^\ast (\omega_X \otimes \det \sce^\vee)$. By \cite[Theorem~5.1.2(b)]{Weyman03} we have identifications
\[
    R^i q_{X \ast}\omega_\EE = H^{i}(\omega_{X}\otimes \det \sce^\vee
    \otimes \sym \sce^\vee), \quad \text{for all $i$.}
\] When $q_X$ is generically finite, by Grauert--Riemenschneider~\cite[Theorem~4.3.9]{LazarsfeldPositivityI}, we have the vanishing
\(
    R^i q_{X \ast} \omega_{\EE} = 0,
\) for $i> 0$, giving the first assertion.

If moreover $Y$ has rational singularity and $q_X$ is birational. Since $\EE$ is smooth, $q_W$ is a rational resolution. Thus Kempf's criterion \cite[50]{KKMS} tells us relative trace map $\tr_{\EE / Y}\colon q_{X \ast} \omega_\EE \to \omega_Y$ is an isomorphism. This gives the second assertion.
\end{proof}
\begin{remark}
The first assertion in the lemma is true when $\sce^\vee$ is replaced by a nef and big vector bundle---this is the vanishing theorem of Griffiths; see~\cite[Theorem~7.3.1 and Example~7.3.3]{LazarsfeldPositivityII}. 
\end{remark}
\begin{example}
Consider $\sce  = \mathscr{E}_{L,L} = \scs_L|_{W_L}$ with $X=W_L$ in the lemma above. The Kempf collapsing is an open subvariety of the Schubert variety $Y_{L,L}$ of the pair $(L,L)$ in the sense of~\cite{BergetFinkExternalActivity}. Applying Serre duality and looking at the
degree-$0$ piece of $\sym \mathscr{E}$, one recovers the known case of strengthening of Speyer's $f$-vector conjecture, proposed in \cite[Remark~1.8]{bestCohomology}, \[
    H^i(-K_{W_L}-D_L) = 0,\quad\text{for $i < r-1$.}
\]  The nonstrengthened version is solved in full generality by~\cite{BergetFinkExternalActivity}. The quantity $h^{r-1}(-K_{W_L}-D_L)$ is, up to sign twists, the $\omega$-invariant of the matroid $\rmm(L)$ of $L$ in the sense of~\cite{fink2024omegainvariantmatroid}; see~\cite[Proposition~5.2]{eur2024ktheoreticpositivitymatroids}.
\end{example}

\begin{proof}[Proof of Theorem~\ref{thm:manivel-vanishing}]
As before, we consider the tuple $\uJ$ given by \[
    \uJ = (\underbrace{L,\dots,L}_{d},J_1,\dots,J_1,\dots,\underbrace{J_i,\dots,J_i}_{d_i},\dots,J_k,\dots,J_k).
\]  The bundle $\scs_\uJ$ is a subbundle of the trivial bundle $\mathrm{Mat}_{N,E}\otimes \sco_{X_E}$ where $N = d+\sum_i d_i $. The quotient $\mathscr{F}_\uJ = (\mathrm{Mat}_{N,E}\otimes \sco_{X_E})/\mathscr{E}_\uJ$ is globally generated by $\mathrm{Mat}_{N,E}$. Consider the Kempf collapsing \[
    q_W\colon \tot (\mathscr{F}_\uJ^\vee \oplus \sco(-\alpha) \oplus \sco(-\beta)) |_{W_L} \to Z \subseteq  \mathrm{Mat}_{N,E}^\vee \times \A^E \times \A^E.
\]
Thanks to Lemma~\ref{lem:beta-trick}, the map $q_W$ is birational. Since $L$ satisfies Condition~\eqref{eq:ll}, we recall that the canonical bundle of $X_E$ is given by $\omega_{X_E} = \sco(-\alpha-\beta)$. By Proposition~\ref{prop:degeneracy-locus} that $\omega_{W_L} = (\omega_{X_E} \otimes \det \scq_L)|_{W_L} = \det\scq_L(-\alpha-\beta)|_{W_L}$. By Lemma~\ref{lem:griffiths-vanishing}, we have \[
    H^i(W_L, \sym [\mathscr{F}_\uJ \oplus \alpha \oplus \beta] \otimes \det \mathscr{F}_\uJ \otimes \det \scq_L)=0
\]
Finally, applying Cauchy's formula to each $\sym \scq_{J_i}^{\oplus d_i}$ yields the desired vanishing.
\end{proof}
Note that the argument for this vanishing result makes no assumption about the Kempf collapsing $Z$ beyond its dimension.

\begin{remark}[Vanishing theorem for divisor classes associated to generalised permutohedra]\label{rm:EurFinkLarson}

Let $L$ be a linear subspace of $\kk^E$ satisfying Condition~\eqref{eq:ll}, in addition to a nef divisor $D$ on $X_E$. Following \cite{eur2024ktheoreticpositivitymatroids}, we can define the section ring \[
    R(L,D) = \bigoplus_{\ell \ge 0} H^0(W_L, \ell D )
\] In recent work~\cite{eur2025vanishingtheoremsmatroids}, Christopher Eur, Alex Fink, and Matt Larson will show that
\begin{enumerate}
    \item The ring $R(L,D)$ is graded Cohen--Macaulay and generated in degree $1$.
    \item The image $Z_L$ of the evaluation map $f_{D_P}\colon W_L\to \P H^0(W_L,D)$ satisfies $R f_{D\,\ast}\sco_{W_L} = \sco_{Z_L}$.
\end{enumerate}
These authors obtained the statements via Gr{\"o}bner degenerations and Frobenius splitting.
Keeping the assumptions as in Theorem~\ref{thm:bwb-vanishing}, consider a generalised permutohedron $P$ defined by the Minkowski sum \[
    P = \rmb(J_1^\perp) + \cdots + \rmb(J_k^\perp)
\] of the base polytopes of dual matroids of $J_i$. The polytope $P$ defines a basepoint-free divisor $D_P$ on the permutohedral variety $X_E$. We have the following positivity results:
\begin{enumerate}[label=\emph{\alph*})]
\item Theorem~\ref{thm:bwb-vanishing} implies that \[
        H^i(W_L,\ell D_P) = 0,\quad \text{for $i>0$ and $\ell\ge 0$.}
    \]
\item If one of the $J_i$ equals $L$, then a combination of Theorem~\ref{thm:bwb-vanishing} and Fact~\ref{fact:lazarsfeld} shows the restriction map $H^0(X_E,\ell D_P)\to H^0(W_L,\ell D_P)$ is surjective for all $k> 0$.
\item Let $K_{W_L}$ be the canonical divisor of $W_L$. If the Kempf collapsing $q_W\colon \EE_{L,\uJ} \to Y_{L,\uJ}$ is birational, then, applying Lemma~\ref{lem:griffiths-vanishing} to $\mathscr{E}_\uJ$, we have \[
        H^i(W_L,K_{W_L}+\ell D_P)=0,\quad \text{for $i>0$ and $\ell > 0$.}
    \]
\end{enumerate}
The Kempf--Weyman technique seems less powerful in this case. We can only deduce Statement~(i) above assuming Statement~(ii) and all the conditions in Items b) and c). In this case, the assertion can be obtained from Serre duality and \cite[Proposition~4.3]{eur2024ktheoreticpositivitymatroids}.

\end{remark}

\section{Relationship to White's conjecture}
\label{sec:white}

In the present section, we show how cohomology groups of tautological bundles govern degrees of generators of toric ideals of matroid base polytopes.
\subsection{Constructing syzygies for a Kempf collapsing}
We start by giving a bit more generalities on Kempf collapsings.
Keeping the notation and assumptions as in \S\ref{subsec:recollection-kempf-weyman}, we have a short exact sequence
\begin{equation}
0\to \sce \to V\otimes \sco_X \to \scf \to 0,
\end{equation}
where $\scf$ is dual to the kernel of the evaluation map $V^\vee\otimes \sco_X \twoheadrightarrow \sce^\vee$. Denote by $S$ the coordinate ring $\sym V^\vee$ of $V$.
Weyman constructed a minimal resolution for the Kempf collapsing $Y$ of $\sce$~\cite[Theorem~5.1.2]{Weyman03}:
\begin{proposition}\label{prop:weyman-complex}
There exists a minimal complex $F_\bullet$ of $\ZZ$-graded $S$-modules, with the $i$\/th term given by \[
F_i = \bigoplus_{j\ge 0}H^j(X,\bigwedge^{i+j}\scf^\vee ) \otimes S(-i-j).
\]
If the collapsing map $q\colon \tot \sce \to Y$ is birational, then $H^0(\sym\sce^\vee)$ is a normalisation of the coordinate ring $k[Y]$ of $Y$. If moreover $\sym\sce^\vee$ has vanishing higher cohomology and $F_0 = S$, then $Y$ is normal, and the complex $F_\bullet$ gives a free resolution of $k[Y]$.
\end{proposition}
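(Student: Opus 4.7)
The plan is to apply Weyman's geometric technique. On the product $X \times V$ with projections $p_1, p_2$, the tautological section of $p_2^{\ast} V = V \otimes \sco_{X \times V}$ composed with the pullback of $V \otimes \sco_X \twoheadrightarrow \scf$ along $p_1$ cuts out $\EE = \tot \sce$ as a regular embedding (since $\scf$ is locally free). Hence $\sco_\EE$ admits a Koszul resolution
\[
\cdots \to p_1^{\ast} \bigwedge^2 \scf^{\vee} \to p_1^{\ast} \scf^{\vee} \to \sco_{X \times V}.
\]

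Next I would derive-push this resolution forward along the flat projection $p_2$. Replacing the Koszul complex by a Cartan--Eilenberg resolution (or tensoring with the \v{C}ech complex of a finite affine cover of $X$) and applying $(p_2)_\ast$ produces a double complex of quasi-coherent $\sco_V$-sheaves. Flat base change and the projection formula identify its $(-i, j)$-entry as $H^j(X, \bigwedge^i \scf^{\vee}) \otimes \sco_V$, and the associated total complex represents $Rq_\ast \sco_\EE$. Because $V$ is affine, passing to global sections yields a complex $G_\bullet$ of finitely generated graded free $S$-modules; the shift $S(-i-j)$ arises because the Koszul generators of $\bigwedge^{i+j}\scf^{\vee}$ sit in internal $\Gm$-weight $i+j$ for the scaling on $V$. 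At this point I would invoke the minimal-model theorem for bounded-above complexes of finitely generated graded free $S$-modules: graded Nakayama at the irrelevant maximal ideal $S_+$ splits off a quasi-isomorphic direct summand $F_\bullet \hookrightarrow G_\bullet$ whose differentials have entries in $S_+$. By construction the terms of $F_\bullet$ are precisely those claimed in the formula.

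For the remaining assertions, the projection formula for the affine map $\EE \to X$ yields $q_\ast \sco_\EE \cong \widetilde{H^0(X, \sym \sce^{\vee})}$. When $q$ is proper birational and $\EE$ is normal, this sheaf is a finite birational extension of $\sco_Y$, hence the normalisation of $k[Y]$. Under the further hypothesis $H^{>0}(X, \sym \sce^{\vee}) = 0$, the object $Rq_\ast \sco_\EE$ collapses to $q_\ast \sco_\EE$ in degree $0$, so $F_\bullet$ is a finite free resolution of that normalisation; the assumption $F_0 = S$ then gives a surjection $S \twoheadrightarrow H^0(F_\bullet)$ that must factor through the subring $k[Y]$, forcing $k[Y]$ to coincide with its normalisation and thereby making $Y$ normal with $F_\bullet$ its minimal free resolution. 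I expect the main obstacle to be the bookkeeping in the minimal-model step: one has to verify that the Cartan--Eilenberg construction can be chosen compatibly with the internal $\Gm$-grading so that the bigraded decomposition of $F_i$ exhibits exactly the summands $H^j(X, \bigwedge^{i+j}\scf^\vee) \otimes S(-i-j)$ with no extraneous cancellations, which relies on the $\sce$-linearity of the Koszul differential interacting cleanly with the \v{C}ech differentials.
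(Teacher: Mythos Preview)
The paper does not supply its own proof of this proposition: it is stated with a direct citation to \cite[Theorem~5.1.2]{Weyman03}, so there is no in-paper argument to compare against. Your sketch is essentially a correct outline of Weyman's proof of that theorem, and the overall architecture (Koszul resolution of $\sco_\EE$ on $X\times V$, push forward along $p_2$, minimise the resulting complex of graded free $S$-modules, then read off the consequences for $k[Y]$) is exactly right.

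One clarification on the step you flag as the main obstacle. The reason the minimal model has precisely the terms $H^j(X,\bigwedge^{i+j}\scf^\vee)\otimes S(-i-j)$ is not a matter of choosing the Cartan--Eilenberg or \v Cech resolution ``compatibly''; it is automatic from the grading. The \v Cech differentials are $S$-linear of internal degree $0$, whereas the Koszul differentials land in $S_+$ (they multiply by coordinate functions on $V$). Hence, after reducing the total complex modulo $S_+$, only the \v Cech differential survives, and its cohomology is exactly $\bigoplus_j H^j(X,\bigwedge^{\bullet}\scf^\vee)$. Graded Nakayama then forces the minimal model to have those, and only those, generators. So there is no risk of ``extraneous cancellations'': the bigrading separates the two differentials cleanly. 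Your phrase ``$\sce$-linearity of the Koszul differential'' should really be that the Koszul differential is $\sco_X$-linear with image in $S_+$; that is the structural fact doing the work.

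A small caveat on the normalisation step: you invoke normality of $\EE$, which requires $X$ normal. The proposition as stated only assumes $X$ projective, but in every application in the paper $X$ is a wonderful variety, hence smooth, so this is harmless in context (and matches Weyman's standing hypotheses).
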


\subsection{Normal presentation of the toric ideal of a matroid}

Let $\rmm$ be a matroid on the ground set $E$ with rank $r$. We will define the \textit{toric ideal} $I_\rmm$ of $\rmm$ to be the kernel of the ring map \[
\kk[x_B : \text{$B$ is a basis of $\rmm$}] \to \kk[x_0,\dots,x_n];\quad x_B\mapsto \prod_{e\in B} y_e.
\]
It is evident that this is the homogeneous ideal cutting out the toric variety $X_{\rmb(\rmm)}$ associated to the base polytope $\rmb(\rmm)$. In the case when $\rmm$ is a realisable matroid,  the projective variety $X_{\rmb(\rmm^\perp)}$ equals, Proposition~\ref{prop:ggms}, the $T$-orbit closure of a linear subspace $L\subseteq \kk^E$ that realises the matroid $\rmm$.

Denote by $\sco_\Gr(1)$ the Pl{\"u}cker line bundle on the Grassmannian $\Gr(r,\kk^E)$, and $\sco_{\rmb(\rmm)}(1)$ the restriction of $\sco_\Gr (1)$ to $X_{\rmb(\rmm)}$.
Let $K = H^0(\Gr(r,\kk^E),\sco_\Gr(1))$ and $\scm_{\rmb(\rmm)}$ be the kernel of the composite \[
K \otimes \sco_{X_{\rmb(\rmm)}} \to H^0(X_{\rmb(\rmm)},\sco_{\rmb(\rmm)}(1)) \otimes \sco_{X_{\rmb(\rmm)}}\to \sco_{\rmb(\rmm)}(1),
\]
where the first map is given by restriction of a section of $\sco_\Gr(1)$ on $\Gr(r,\kk^E)$ to $X_{\rmb(\rmm)}$, and the second map is given by evaluating the section.
\begin{proposition}\label{thm:weak-white}
Let $\rmm$ be a connected matroid of rank or $r$. The following are equivalent for an integer $j>1$:
\begin{enumerate}
\item The toric ideal $I_{\rmb(\rmm)}$ has no degree-$(j+1)$ minimal generators.
\item $H^j(X_{\rmb(\rmm)},\bigwedge^{j+1} \scm_{\rmb(\rmm)}) = 0$.
\item $H^1(X_{\rmb(\rmm)}, \bigwedge^{2} \scm_{\rmb (\rmm)} \otimes  \sco_{\rmb(\rmm)}({j-1}) ) = 0.$
\end{enumerate}
\end{proposition}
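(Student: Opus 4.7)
\emph{Strategy.} The plan is to invoke the Weyman complex (Proposition~\ref{prop:weyman-complex}) for the Kempf collapsing of $\sce = \sco_{\rmb(\rmm)}(-1)$ on $X := X_{\rmb(\rmm)}$, based on the defining sequence
\[
    0 \to \scm_{\rmb(\rmm)} \to K \otimes \sco_X \to \sco_{\rmb(\rmm)}(1) \to 0
\]
coming from the Plücker-style embedding $X \hookrightarrow \P K^\vee$. Very-ampleness of $\sco_{\rmb(\rmm)}(1)$ makes the collapsing $q\colon \tot \sco_{\rmb(\rmm)}(-1) \to Y \subseteq K^\vee$ birational onto its image, and the nefness of $\sco_{\rmb(\rmm)}(d)$ on the normal projective toric variety $X$ annihilates the higher cohomology of $\bigoplus_d \sco_{\rmb(\rmm)}(d)$. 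Combined with the projective normality of $X_{\rmb(\rmm)}$ --- equivalently the integral closedness of the matroid base polytope, a classical result of White --- this promotes $F_\bullet$ to a minimal free resolution over $S := \sym K$ of $\kk[Y]$, with
\[
    F_i = \bigoplus_{j \geq 0} H^j\bigl(X, \bigwedge\nolimits^{i+j}\scm_{\rmb(\rmm)}\bigr) \otimes S(-i-j).
\]

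\emph{Identifying minimal generators.} For (i)$\Leftrightarrow$(ii), I would read off the minimal generators of $I(Y) \subseteq S$ from $F_1$. The $j=0$ piece $H^0(\scm_{\rmb(\rmm)})$ is the kernel of the surjective restriction $K \twoheadrightarrow H^0(X, \sco_{\rmb(\rmm)}(1))$, i.e.\ the span of the non-basis Plücker coordinates; killing these identifies $\kk[x_B : B \text{ a basis of } \rmm]$ as a quotient of $S$ and $I_{\rmb(\rmm)}$ as the image of $I(Y)$. Consequently, for every $j \geq 1$, the minimal generators of $I_{\rmb(\rmm)}$ in degree $j+1$ are in bijection with the summand $H^j(X, \bigwedge^{j+1}\scm_{\rmb(\rmm)})$ of $F_1$, establishing the first equivalence.

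\emph{Koszul iteration.} For (ii)$\Leftrightarrow$(iii), I would exploit that $\sco_{\rmb(\rmm)}(1)$ has rank one, so the $(k+1)$-st exterior power of the defining sequence is again short exact:
\[
    0 \to \bigwedge\nolimits^{k+1}\scm_{\rmb(\rmm)} \to \bigwedge\nolimits^{k+1} K \otimes \sco_X \to \bigwedge\nolimits^k \scm_{\rmb(\rmm)} \otimes \sco_{\rmb(\rmm)}(1) \to 0.
\]
Twisting by $\sco_{\rmb(\rmm)}(d)$ with $d \geq 0$ and using $H^{\geq 1}(X, \sco_{\rmb(\rmm)}(d)) = 0$ to kill the middle term in all positive degrees, the long exact sequence produces
\[
    H^i\bigl(X, \bigwedge\nolimits^{k+1}\scm_{\rmb(\rmm)} \otimes \sco_{\rmb(\rmm)}(d)\bigr) \simeq H^{i-1}\bigl(X, \bigwedge\nolimits^k \scm_{\rmb(\rmm)} \otimes \sco_{\rmb(\rmm)}(d+1)\bigr)
\]
for every $i \geq 2$. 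Iterating one step at a time from $(i, k+1, d) = (j, j+1, 0)$ down to $(2, 3, j-2)$ chains these isomorphisms into the single identification
\[
    H^j\bigl(\bigwedge\nolimits^{j+1}\scm_{\rmb(\rmm)}\bigr) \simeq H^1\bigl(\bigwedge\nolimits^2 \scm_{\rmb(\rmm)} \otimes \sco_{\rmb(\rmm)}(j-1)\bigr),
\]
which is precisely (ii)$\Leftrightarrow$(iii).

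\emph{Main obstacle.} The substantive input is the projective normality of $X_{\rmb(\rmm)}$ used to guarantee $F_0 = S$ and identify $\kk[Y]$ with the section ring; everything downstream is bookkeeping on the Weyman complex and a mechanical Koszul iteration. Connectedness of $\rmm$ enters in ensuring that $\rmb(\rmm)$ is full-dimensional in its affine hull and that the toric ideal $I_{\rmb(\rmm)}$ is prime of the expected codimension, so that $Y$ has the expected dimension and the Weyman machinery can be applied cleanly.
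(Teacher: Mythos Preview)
Your proposal is correct and follows essentially the same route as the paper: the Kempf--Weyman complex for the collapsing of $\sco_{\rmb(\rmm)}(-1)$ combined with White's projective normality yields (i)$\Leftrightarrow$(ii), and the Koszul iteration on exterior powers of the defining sequence gives (ii)$\Leftrightarrow$(iii). If anything, you are slightly more explicit than the paper in tracking the $j=0$ summand of $F_1$ (the non-basis Pl\"ucker variables) to pass from $I(Y)\subseteq S$ to $I_{\rmb(\rmm)}$ in the basis-variable ring.
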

Therefore, weak White's conjecture is true if and only if Statement~(ii) holds for all $j>1$. The equivalence between (i) and (ii) can be construed in terms of Koszul cohomology, but we will first give a proof using the Kempf--Weyman technique, which is more akin to the viewpoint of \S\ref{sec:kempf}.

\begin{proof}
For the equivalence between Item~(i) and Item~(ii), we consider Example~\ref{ex:collapsing-of-line-bundle} with the following diagram
\[
\begin{tikzcd}
{\tot \sco_{\rmb(\rmm)}(-1)|_{X_{\rmb(\rmm)}}} & {\hat X_{\rmb(\rmm)}\subseteq K} \\
{X_{\rmb(\rmm)}}
\arrow["{q}", from=1-1, to=1-2]
\arrow[from=1-1, to=2-1]
\end{tikzcd}\]
where $q$ is the Kempf collapsing of the line bundle $\sco_{\rmb(\rmm)}(-1)$. Since $\sco_{\rmb(\rmm)}(1)$ is very ample, the map $q$ is an isomorphism away from the zero section. Therefore, by Proposition~\ref{prop:weyman-complex}, the complex $F_\bullet$ gives a minimal free resolution of the normalisation of the coordinate ring $k[\hat X_{\rmb(\rmm)}]$, and hence $k[\hat X_{\rmb(\rmm)}]$ itself, since White showed $\rmb(\rmm)$ is a normal polytope~\cite[Theorem~1]{WhiteBasisMonomial}. Now, looking at $F_1$, one deduces that $I_L$ is has no minimal generators in degree $j+1$ if and only if $H^j(X,\bigwedge^{j+1}\scm_L) = 0$ for $j>1$.

For the equivalence between Item~(ii) and Item~(iii), we look at the short exact sequences \[
0\to \sco_{\rmb(\rmm)}(1)\otimes\bigwedge^{b-1} \scm_{\rmb(\rmm)} \to \bigwedge^{b} K \otimes \sco_{X_{\rmb(\rmm)}} \to  \bigwedge^{b} \scm_{\rmb(\rmm)} \to 0
\] for various $b\ge 0$. Twisting by tensor powers of $\sco_{\rmb(\rmm)}(1)$ and noting that the middle term always has vanishing higher cohomology, we obtain a sequence of isomorphisms
\begin{align*}
H^1(X_{\rmb(\rmm)}, \bigwedge^{2} \scm_{\rmb (\rmm)} \otimes  \sco_{\rmb(\rmm)}({j-1}) ) & \to \cdots \\
\to H^{j-1}(\bigwedge^{j}\scm_{\rmb(\rmm)} \otimes \sco_{\rmb(\rmm)}(1)) & \to H^j(\bigwedge^{j+1}\scm_{\rmb(\rmm)}).
\end{align*}
This gives the equivalence between Item~(ii) and Item~(iii).
\end{proof}
\begin{remark}[The equivalence between Item~(i) and Item~(iii) \textit{via} Koszul cohomology]
Let $\scm'$ be the kernel of the evaluation map \[
H^0(X_{\rmb(\rmm)},\sco_{\rmb(\rmm)}(1)) \otimes \sco_{X_{\rmb(\rmm)}}\to \sco_{\rmb(\rmm)}(1).
\]
Recall that White~\cite[Theorem~1]{WhiteBasisMonomial} proved that the toric variety $X_{\rmb(\rmm)}$ is projectively normal. Therefore, we have
\begin{equation}\label{eq:N0-for-BM}
H^1(X_{\rmb(\rmm)}, \scm' \otimes  \sco_{\rmb(\rmm)}(b))= 0,\quad\text{for $b\ge 1$.}
\end{equation}
By \cite[Lemma~1.4]{EinLazarsfeldKoszulCohomologyArbitraryDimension}, Item~(i) for $j\in \ZZ_{>0}$ is equivalent to the vanishing
\begin{equation}\label{eq:N1-for-BM}
H^1(X_{\rmb(\rmm)}, \bigwedge^{2} \scm' \otimes  \sco_{\rmb(\rmm)}({j-1}))= 0.
\end{equation} Computing extension classes, one can observe that $\scm_{\rmb(\rmm)}$ splits as the direct sum $\scm'$ with a trivial bundle,
\[
\scm_{\rmb(\rmm)}\simeq \scm'\oplus \ker[H^0(\sco_\Gr(1))\to H^0(\sco_{\rmb(\rmm)}(1))]  \otimes \sco_{X_{\rmb(\rmm)}}.
\]
Therefore, considering Eq.\eqref{eq:N0-for-BM}, we see that the condition in Eq.\eqref{eq:N1-for-BM} is equivalent to Item~(iii).
\end{remark}
\begin{remark}
    It is also true that the toric variety $X_P$ associated to any integral polymatroid base polytope $P$ is projectively normal; see~\cite[Chapter~18.6, Theorem~3]{Welsh} or \cite[Proposition~3.10]{EHLstella}. 
\end{remark}
Turning to the realisable case, we start with the following observation, which is a consequence of Proposition~\ref{prop:ggms}; see also \cite[Appendix~III]{best}.
\begin{observation}\label{obs:realisable-matroid}
When $\rmm$ is realised by a linear subspace $L\subseteq \kk^E$, the toric variety $X_{\rmb(\rmm)}$ is the image of the map $\phi_L\colon X_E \to X_{\rmb(\rmm)}$ introduced in Proposition~\ref{prop:ggms}. In particular, we have \begin{enumerate}
    \item $R\phi_{L\,\ast}\sco_{X_E} \simeq \sco_{X_{\rmb(\rmm)}}$; see~\cite[Theorem~9.2.5]{CLS}.
    \item $\det \scq_L \simeq \det \scs_{L}^\vee \simeq \phi_L^\ast \sco_{\rmb(\rmm^\perp)}(1)$.
\end{enumerate}
\end{observation}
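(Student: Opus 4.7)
The first assertion---that $\phi_L$ surjects onto $X_{\rmb(\rmm)}$---is essentially a restatement of Proposition~\ref{prop:ggms}. That result constructs $\phi_L$ as the unique $T$-equivariant extension of $\bar t\mapsto [t^{-1}L]$ to $X_E$ and identifies its image with $\overline{T\cdot [L]}\subseteq \Gr(r,\kk^E)$, which by the second half of \emph{ibid.}\ equals $X_{\rmb(L)}=X_{\rmb(\rmm)}$ since $\rmm(L)=\rmm$.

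For Statement (i), the plan is to view $\phi_L$ as a toric morphism and invoke~\cite[Theorem~9.2.5]{CLS}. Since $X_E$ is projective, $\phi_L$ is automatically proper; since it is $T$-equivariant and sends the big open torus to the big open torus, it is toric in the sense of~\cite{CLS} and corresponds to a map of fans $\Sigma_E\to \Sigma_{\rmb(\rmm)}$ inducing the identity on the common ambient lattice $N_E$. The key combinatorial input is that $\Sigma_E$ refines $\Sigma_{\rmb(\rmm)}$: the rays of $\Sigma_{\rmb(\rmm)}$ are of the form $\bar\be_F$ indexed by proper nonempty flats $F$ of $\rmm$, each of which already appears as a ray of $\Sigma_E$, and more generally the permutohedral fan refines the normal fan of every generalised permutohedron. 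With the refinement established, $X_{\rmb(\rmm)}$ being normal (as every toric variety is), and $\phi_L$ being proper, the equality $R\phi_{L,\ast}\sco_{X_E}\simeq \sco_{X_{\rmb(\rmm)}}$ follows directly from \cite[Theorem~9.2.5]{CLS}.

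For Statement (ii), the first isomorphism $\det\scq_L\simeq \det\scs_L^\vee$ is immediate from taking top wedges of the tautological sequence $0\to \scs_L\to \sco_{X_E}^E\to \scq_L\to 0$, which gives $\det\scs_L\otimes\det\scq_L\simeq \det(\sco_{X_E}^E)\simeq \sco_{X_E}$. For the second isomorphism, one uses that on $\Gr(r,\kk^E)$ the Plücker line bundle is $\sco_\Gr(1)\simeq \det\scs_\Gr^\vee\simeq \det\scq_\Gr$; restricting to the orbit closure $X_{\rmb(\rmm)}\subseteq \Gr(r,\kk^E)$ and pulling back via $\phi_L$ yields $\det\scs_L^\vee=\phi_L^\ast(\det\scs_\Gr^\vee|_{X_{\rmb(\rmm)}})$. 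It remains to reconcile the resulting polarisation with $\sco_{\rmb(\rmm^\perp)}(1)$; this is the convention coming from matroid duality, under which $\rmb(\rmm^\perp)=\be_E-\rmb(\rmm)$ and the corresponding Plücker polarisation on the $(n+1-r)$-Grassmannian pulls back along the Cremona involution to the Plücker polarisation of $\rmb(\rmm)$. In particular, one can check on the generic point of the torus that the character of $\det\scs_L^\vee$ and that of $\phi_L^\ast\sco_{\rmb(\rmm^\perp)}(1)$ agree, which identifies the two $T$-linearised line bundles on the toric variety $X_E$.

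The main non-routine step is the combinatorial fact that $\Sigma_E$ refines the normal fan of $\rmb(\rmm)$; this is well known but worth stating explicitly. The remaining verifications---equivariance, properness, and matching Plücker polarisations under duality---are straightforward bookkeeping.
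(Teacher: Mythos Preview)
Your argument is correct and follows the route the paper indicates (the paper itself gives no proof beyond citing Proposition~\ref{prop:ggms}, \cite[Theorem~9.2.5]{CLS}, and \cite[Appendix~III]{best}). The treatment of surjectivity and of (i) via the refinement $\Sigma_E\to\Sigma_{\rmb(\rmm)}$ is exactly right.

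One remark on (ii): the passage reconciling $\det\scs_L^\vee$ with $\sco_{\rmb(\rmm^\perp)}(1)$ via the Cremona involution is slightly tangled---$\crem$ is a self-map of $X_E$, not a map between Grassmannians, so the phrase ``pulls back along the Cremona involution to the Pl\"ucker polarisation of $\rmb(\rmm)$'' does not quite parse. A cleaner formulation: the orthogonal-complement isomorphism $\perp\colon\Gr(r,\kk^E)\xrightarrow{\sim}\Gr(n{+}1{-}r,\kk^E)$ carries $\overline{T.[L]}=X_{\rmb(\rmm)}$ onto $\overline{T.[L^\perp]}=X_{\rmb(\rmm^\perp)}$ and satisfies $\perp^\ast\sco_{\Gr(n+1-r)}(1)\simeq\sco_{\Gr(r)}(1)$; hence $\det\scq_L=\phi_L^\ast\sco_{\Gr(r)}(1)=(\perp\circ\phi_L)^\ast\sco_{\rmb(\rmm^\perp)}(1)$, and the paper's notation simply suppresses the $\perp$. (If one insists on seeing $\crem$, it enters through the identity $\perp\circ\phi_L=\phi_{L^\perp}\circ\crem$, but this detour is unnecessary.) Your final check of $T$-characters at the generic torus point is a valid way to confirm the identification once the map to $X_{\rmb(\rmm^\perp)}$ has been made explicit.
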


\begin{remark}
In the case when the matroid $\rmm$ is realised by a linear subspace $L\subseteq\CC^E$, the projective normality of $X_{\rmb(\rmm)}$ can also be obtained as a consequence of Theorem~\ref{thm:bwb-vanishing}. Running a Leray spectral sequence with Observation~\ref{obs:realisable-matroid}, we obtain \[H^i(X_{\rmb(\rmm)},\sco_{\rmb(\rmm)}(\ell)) = H^i(X_{E},\det \scq_{L^\perp}^{\otimes\ell})= 0,\quad\text{for $i >0$ and $\ell\ge 0$,}
\] so the normality of $\hat X_{\rmb(\rmm)}$ and Proposition~\ref{prop:weyman-complex}.
\end{remark}
\begin{remark}[Degree bound on minimal generators of $I_\rmm$]
Keeping the notation and assumption as in Proposition~\ref{thm:weak-white}, we claim that the ideal $I_{\rmm}$ has generators in degree at most $n$. This can be obtained by combining the Batyrev--Borisov vanishing theorem~\cite[Theorem~9.2.7]{CLS} and the observation that the the polytope $\rmb$ has no interior lattice points.

In the case when the matroid is realised by a linear subspace $L\subseteq \kk^E$.
Twisting the sequence at the end of the proof of Proposition~\ref{thm:weak-white} by $\det\scs_{L^\perp} = \sco_{\rmb(\rmm)}(-1)$ yields \[
0\to \bigwedge^{j+2}\phi_L^*\scm_{\rmb(\rmm)} \otimes \det \scs_{L^\perp} \to \bigwedge^{j+2} K \otimes \det \scs_{L^\perp} \to \bigwedge^{j+1} \phi_L^\ast\scm_{\rmb(\rmm)} \to 0
\]
By Theorem~\ref{thm:SQvanishing}, the middle term has no higher cohomology. Considering Observation~\ref{obs:realisable-matroid} again, we have \[ 
H^j(X_{\rmb(\rmm)},\bigwedge^{j+1} \scm_{\rmb(\rmm)}) \simeq H^{j+1}(X_{\rmb(\rmm)},\bigwedge^{j+2}\scm_{\rmb(\rmm)}) \simeq H^{j+1}(X_E, \phi_L^*\bigwedge^{j+2}\scm_{\rmb(\rmm)})=0,\]
for $j\ge n$, since $X_E$ has dimension $n.$
\end{remark}

Let $\rmm$ be a matroid realised by a linear subspace $L\subseteq \kk^E$. Thanks to Item~(ii) of Observation~\ref{obs:realisable-matroid}, we can put a filtration on the vector bundle $\phi_L^\ast \scm_{\rmb(\rmm^\perp)}$ with associated graded \[
    \bigwedge^{r-i} \scs_L^\vee \otimes \bigwedge^{i} \scq_L \quad\text{for $0< i\le r$.}
\] 
These graded pieces also show up as the direct summands of $\bigwedge^r \sce_L^\vee$. At the same time, we also obtain an induced filtration on $\phi_L^\ast\scm_{\rmb(\rmm^\perp)}^{\otimes 2} \otimes \det \scq_L$, with associated graded \[
\bigwedge^{r-i}\scs_L^\vee \otimes \bigwedge^{i}\scq_L^\vee \otimes \bigwedge^{r-j} \scs_L^\vee \otimes \bigwedge^{\# E - r - j} \scq_L\quad\text{for all $0<i,j\le r$.}
\] It is natural to ask about higher cohomology vanishing of the associated graded of this filtration; to this end, we propose a stronger prediction regarding minimal generators of $I_{\rmm^\perp}$ in degree $3$.
\begin{conjecture}\label{conj:white-deg3}
Let $L\subseteq \CC^n$ be a linear subspace of dimension $r$, then the vector bundle \[
\bigwedge^r \sce_L^\vee \otimes \bigwedge^r \sce_L^\vee \otimes \det \scq_L
\simeq \bigoplus_{0\le i,j\le r} \bigwedge^{r-i}\scs_L^\vee \otimes \bigwedge^{i}\scq_L^\vee \otimes \bigwedge^{r-j} \scs_L^\vee \otimes \bigwedge^{\# E - r - j} \scq_L
\] has vanishing $H^1$. Here, we put $\sce_L = \scs_L \oplus \scq_L$, following the notation as in \S\ref{sec:SQproof}.
\end{conjecture}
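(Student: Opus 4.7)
The strategy is to adapt the Kempf--Weyman geometric technique of \S\ref{sec:kempf} to a bundle mixing sub- and quotient tautological factors. Consider, on $X_E$, the vector bundle $\mathscr{E} = (\scs_L \oplus \scq_L^\vee)^{\oplus N} \oplus \sco(-\beta)$ for a sufficiently large $N$. Its dual $\mathscr{E}^\vee = (\scs_L^\vee \oplus \scq_L)^{\oplus N} \oplus \sco(\beta)$ is globally generated because $\scs_L^\vee$ and $\scq_L$ are quotients of trivial bundles and $\sco(\beta) = \scs_\ell^\vee$ for a general line $\ell$. By Cauchy's formula in characteristic $0$, the symmetric algebra $\sym \mathscr{E}^\vee$ contains every product $\schur^{\lambda_1} \scs_L^\vee \otimes \schur^{\mu_1} \scq_L \otimes \schur^{\lambda_2} \scs_L^\vee \otimes \schur^{\mu_2} \scq_L$ with at most $N$ parts in each Schur power, twisted by $\sco(k\beta)$ for $k \ge 0$. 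Using $\bigwedge^j \scq_L^\vee \otimes \det \scq_L \simeq \bigwedge^{n+1-r-j} \scq_L$ to absorb the single $\det \scq_L$-twist, one realises every summand of the bundle in Conjecture~\ref{conj:white-deg3} as a direct summand of $\sym \mathscr{E}^\vee$ up to a further line-bundle twist. Lemma~\ref{lem:beta-trick}, applied to the $\sco(-\beta)$ summand, guarantees that the Kempf collapsing $q\colon \tot\mathscr{E} \to Y \subseteq V = H^0(\mathscr{E}^\vee)^\vee$ is birational onto its image.

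Granted this setup, Lemma~\ref{lem:expected-dim-vanishing} reduces the vanishing in Conjecture~\ref{conj:white-deg3} to proving that $Y$ is normal with rational singularities. Its points encode, over a $T$-point $\bar t$ of $X_E$, tuples $(v_a, \phi_b)$ with $v_a \in t^{-1} L$ and $\phi_b$ an annihilating covector of $t^{-1}L$, so $Y$ is closely related to the conormal variety of the matroid toric variety $X_{\rmb(\rmm(L))}$.

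The principal obstacle is establishing that $Y$ has rational singularities. Unlike the varieties $Y_{L\sqcup\uJ}$ of \S\ref{subsec:recollection-kempf-weyman}, whose multiplicity-freeness followed from Bingling Li's formula~\cite{BinglinLi2018} and Brion's criterion~\cite{Brion2003}, the present $Y$ is no longer multiplicity-free, since the $T$-weights of vectors and covectors over the same $T$-orbit may collide. A natural attack is to construct an equivariant resolution of $Y$ by a homogeneous bundle over a partial flag variety and to verify, using the characteristic-free vanishing results of \S\ref{sec:SQproof}, that its higher direct images vanish. A more combinatorial alternative is to mimic the deletion--contraction induction of \S\ref{sec:SQproof} by analysing pushforwards of the summands of the target bundle along $f\colon X_E \to X_{E\setminus n}$ via Sequences~\eqref{eq:*} and \eqref{eq:**}; this route requires an extension of Lemma~\ref{lem:pushforward-L1-wedgeQ} handling mixed tensors of $\scs_L^\vee$, $\scq_L^\vee$, and $\scq_L$ alongside the line bundle $\scl_1$, and such a generalisation appears to be the main technical bottleneck.
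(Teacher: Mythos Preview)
The statement you are attempting to prove is labelled and treated in the paper as a \emph{conjecture}; the paper offers no proof, and in fact the discussion immediately following Conjecture~\ref{conj:white-deg3} explains why the most natural strategies are obstructed. Your proposal is therefore not to be compared against a paper proof, but against those obstructions.

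Your Kempf--Weyman approach has two genuine gaps. First, the bookkeeping does not close: the summands in the conjecture contain \emph{both} $\bigwedge^{i}\scq_L^\vee$ and $\bigwedge^{\#E-r-j}\scq_L$. Cauchy's formula applied to $\sym(\scs_L^\vee\oplus\scq_L)^{\oplus N}$ only produces Schur functors of $\scs_L^\vee$ and $\scq_L$, never of $\scq_L^\vee$. You invoke the identity $\bigwedge^{i}\scq_L^\vee\simeq\bigwedge^{\#E-r-i}\scq_L\otimes\det\scq_L^\vee$ to convert the dual factor, but this leaves a residual $\det\scq_L^\vee$ twist that cannot be absorbed into $\sym\mathscr{E}^\vee$: the line bundle $\det\scq_L^\vee$ is the inverse of a nef class and is not globally generated, so it cannot be added as a summand of $\mathscr{E}^\vee$. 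The phrase ``up to a further line-bundle twist'' conceals exactly this problem. Second, even granting the setup, you correctly identify that the image $Y$ is not multiplicity-free, so Brion's criterion is unavailable; you then list two fallback ideas without carrying either out.

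Regarding your second fallback---the deletion--contraction induction---the paper explicitly records why it is obstructed. There exists a $3$-dimensional $L\subseteq\CC^{12}$ with $H^1(W_L,\scq_L)\neq 0$, which via the Koszul resolution forces $H^{a+1}(X_E,\bigwedge^{a}\scq_L^\vee\otimes\scq_L)\neq 0$ for some $0<a<\#E-r$. Hence the natural ``big bundle'' $\Psi_L=\scs_L^\vee\oplus\scq_L^\vee\oplus\scs_L^\vee\oplus\scq_L$ has exterior powers with nonvanishing higher cohomology, so the inductive scheme of \S\ref{sec:SQproof} cannot run unmodified. Any successful attack must therefore exploit cancellation among the summands rather than vanishing of each individually, and neither your proposal nor the paper supplies such an argument.
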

When $\on{char}\kk\neq 2$, a positive answer to Conjecture~\ref{conj:white-deg3} would imply that $\phi_L^\ast \bigwedge^2 \scm_{\rmb(\rmm^\perp)}\otimes \det \scq_L$ has vanishing $H^1$, which implies that $I_{M^\perp}$ has no minimal generators in degree $3$. 

One may hope to attack Conjecture~\ref{conj:white-deg3} using the same strategy as we did for Theorem~\ref{thm:SQvanishing}. Unfortunately, as we shall explain in below, the strategy of working with exterior powers of a direct sum of the form \[
\Psi_{L} = \scs_L^\vee \oplus \scq_L^\vee \oplus \scs_L^\vee \oplus \scq_L.
\] is likely to fall short.
By \cite[Example~5.3]{eur2025vanishingtheoremsmatroids}, there exists a $3$-dimensional linear subspace $L\subseteq \CC^E$ for $E=\{0,1,\dots,11\}$ such that $H^1(W_L,\scq_L)\neq 0$. Considering Fact~\ref{fact:lazarsfeld}, we can find a natural number $0<a<\# E - r$ such that \[
H^{a+1}(\bigwedge^a \scq_L^\vee \otimes\scq_L) \neq 0.
\]
Here, we exclude the cases $a=0$ and $a=\# E -r = n+1-r$, since it does occur that $H^1(\scq_L)= 0$ and, by the standard Cremona transform, that \[
H^{n-r+2}(\bigwedge^{n+1-r} \scq_L^\vee \otimes \scq_L) \simeq H^{n-r+2}(\bigwedge^{n-r} \scq_L^\vee) =0.
\]
While this does not falsify Conjecture~\ref{conj:white-deg3}, it does imply there exists a natural number $d\ge 2$ such that the exterior power $\bigwedge^d \Psi_L$ has nonvanishing higher cohomology.

\appendix

\section{Vanishing result on tautological quotient bundles}\label{appendix:exterior-quotient}

We give a simplified proof of higher cohomology vanishing for exterior powers of the tautological quotient bundle $\scq_L$ as well as a deletion-contraction formula for the dimension of global sections of pushing forward exterior powers of $\scq_L$. The first assertion also can also be deduced from Theorem~\ref{thm:SQvanishing}; we view the second assertion below as a verification of deletion-contraction identity in \cite[Theorem~1.5]{bestCohomology} up to taking global sections.

\begin{proposition}
Let $L\subseteq \kk^E$ be a linear subspace. For any $d\ge 0$, we have the following
\begin{eqnarray*}
& (i). & H^{i}(X_E, \bigwedge^d \scq_L) = 0\quad \text{for $i>0$}.\\
& (ii).& h^0(X_{E\setminus n}, f_\ast \bigwedge^d \scq_L) = \\ & &
\begin{cases}
h^0(X_{E\setminus n},\bigwedge^d (\scq_{L\setminus n} \oplus \sco_{X_{E\setminus n}})), & \text{if $n$ is a loop;}\\
h^0(X_{E\setminus n}, \bigwedge^d \scq_{L / n}), & \text{if $n$ is a coloop;}\\
h^0(X_{E\setminus n}, \bigwedge^d \scq_{L/n}) + h^0(X_{E\setminus n}, \bigwedge^{d-1} \scq_{L\setminus n}), & \text{if $n$ is not a loop or coloop.}
\end{cases}
\end{eqnarray*}
\end{proposition}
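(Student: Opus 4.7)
The plan is to induct on $|E|$. The base case $|E|=1$ is trivial; for $|E|\ge 2$, we use the deletion map $f\colon X_E \to X_{E\setminus n}$. By Statement~(i) of Lemma~\ref{lem:derivedVanshings}, we have $R^{>0} f_\ast \bigwedge^d \scq_L = 0$, so the Leray spectral sequence reduces (i) to the higher cohomology vanishing of $f_\ast \bigwedge^d \scq_L$ on $X_{E\setminus n}$, and (ii) to the computation of $h^0(f_\ast \bigwedge^d \scq_L)$.

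The loop and coloop cases are easy. When $n$ is a loop, the fourth column of Diagram~\eqref{eq:diagram-upstairs} collapses to $\scq_L \simeq f^\ast \scq_{L/n} \oplus \sco^{\{n\}}_{X_E}$, so the projection formula and the identity $L/n = L\setminus n$ yield $f_\ast \bigwedge^d \scq_L \simeq \bigwedge^d(\scq_{L\setminus n}\oplus \sco_{X_{E\setminus n}})$; similarly, the coloop case reduces to $\scq_L \simeq f^\ast \scq_{L/n}$. In both situations, the conclusion follows from the inductive hypothesis.

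For the interesting case when $n$ is neither a loop nor a coloop, $\scl_1$ is a line bundle. Taking the $d$th exterior power of the short exact sequence from the fourth column of Diagram~\eqref{eq:diagram-upstairs} and pushing it forward---using the second column of the same diagram tensored with $\bigwedge^{d-1} \scq_L$ combined with Lemma~\ref{lem:derivedVanshings}(i) to verify $R^1 f_\ast(\scl_1 \otimes \bigwedge^{d-1} \scq_L) = 0$---produces on $X_{E\setminus n}$ the short exact sequence
\[
    0 \to f_\ast(\scl_1\otimes \bigwedge^{d-1}\scq_L) \to \bigwedge^d\scq_{L/n}\oplus \bigwedge^{d-1}\scq_{L/n} \to f_\ast \bigwedge^d \scq_L \to 0.
\]
Applying Lemma~\ref{lem:pushforward-L1-wedgeQ} and the identity $\det \scq_{L/n} \simeq \scl_2\otimes \det \scq_{L\setminus n}$ from the deletion-contraction sequence $0\to \scl_2 \to \scq_{L/n} \to \scq_{L\setminus n} \to 0$ on $X_{E\setminus n}$ (\emph{cf.}\ Diagram~\eqref{eq:diagram-downstairs}) identifies the leftmost term with $\scl_2 \otimes \bigwedge^{d-2}\scq_{L\setminus n}$.

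The conclusion is then an Euler characteristic computation. Taking the $(d-1)$th exterior power of the deletion-contraction sequence and applying the inductive hypothesis to $\bigwedge^{d-1}\scq_{L/n}$ and $\bigwedge^{d-1}\scq_{L\setminus n}$ yields $H^{\ge 2}(\scl_2\otimes \bigwedge^{d-2}\scq_{L\setminus n}) = 0$ together with $\chi(\scl_2\otimes \bigwedge^{d-2}\scq_{L\setminus n}) = h^0(\bigwedge^{d-1}\scq_{L/n}) - h^0(\bigwedge^{d-1}\scq_{L\setminus n})$. Feeding these into the long exact sequence of the displayed short exact sequence gives $H^{>0}(f_\ast \bigwedge^d \scq_L) = 0$ and, via a further Euler characteristic tally, $h^0(f_\ast \bigwedge^d \scq_L) = h^0(\bigwedge^d \scq_{L/n}) + h^0(\bigwedge^{d-1}\scq_{L\setminus n})$. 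The main technical subtlety is the identification of $f_\ast(\scl_1\otimes \bigwedge^{d-1}\scq_L)$ with $\scl_2\otimes \bigwedge^{d-2}\scq_{L\setminus n}$, matching the deletion-contraction structure upstairs in $X_E$ with that downstairs in $X_{E\setminus n}$; once in place, the Euler characteristic argument for (ii) bypasses the need to produce an explicit isomorphism $f_\ast \bigwedge^d \scq_L \simeq \bigwedge^d \scq_{L/n}\oplus \bigwedge^{d-1}\scq_{L\setminus n}$, which is precisely the obstruction to fully salvaging~\cite[Theorem~1.6]{bestCohomology} noted in Remark~\ref{rm:eur-gap}.
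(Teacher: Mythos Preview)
Your argument is correct and follows essentially the same route as the paper's: both push forward the exterior power of the $\scq$-column of Diagram~\eqref{eq:diagram-upstairs}, identify $f_\ast(\scl_1\otimes\bigwedge^{d-1}\scq_L)$ with $\scl_2\otimes\bigwedge^{d-2}\scq_{L\setminus n}$ via Lemma~\ref{lem:pushforward-L1-wedgeQ}, fit it into the $(d-1)$th exterior power of the downstairs deletion--contraction sequence, and finish with an Euler-characteristic count. The only cosmetic difference is that you derive $R^1 f_\ast(\scl_1\otimes\bigwedge^{d-1}\scq_L)=0$ from the $\scs$-column tensored with $\bigwedge^{d-1}\scq_L$ and Lemma~\ref{lem:derivedVanshings}(i), whereas the paper invokes Lemma~\ref{lem:derivedVanshings}(iv) directly; both are valid.
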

\begin{proof}
Considering the Leray spectral sequence \[
E^{p,q}_2 = H^p(X_{E\setminus n}, R^q f_\ast \bigwedge^d \scq_L) \Rightarrow H^{p+q}(X_E, \bigwedge^d \scq_L).
\]
Since $\bigwedge^d \scq_L$ has vanishing cohomology, it is enough to show $E^{p,0}_2 = 0$ for $p> 0$.
We proceed by induction on $n$. In the case when $n$ is a loop or coloop, the second assertion is clear from the identity $\scq_L \simeq \scq_{L\setminus n \oplus L | n}$ and the projection formula. The first assertion also follows immediately from the inductive hypothesis.

It remains to handle the case when $n$ is neither a loop nor coloop. Taking exterior power of the short exact sequence from the third column of Diagram~\eqref{eq:diagram-upstairs} yields
\[
0 \to \scl_1 \otimes \bigwedge^{d-1} \scq_L \to f^\ast \bigwedge^d \scq_{L/n}\oplus \sco_{X_{E\setminus n}} \to \bigwedge^d \scq_{L} \to 0.
\]
Pushing forward along $f$, we have, thanks to the projection formula and Lemma~\ref{lem:derivedVanshings}, a short exact sequence
\begin{equation}\label{eq:appendix-exterior-upstairs}
0 \to f_\ast (\scl_1 \otimes \bigwedge^{d-1} \scq_L) \to \bigwedge^d (\scq_{L/n}\oplus \sco_{X_{E\setminus n}}) \to f_\ast \bigwedge^d \scq_{L} \to 0,
\end{equation}
where the first term is by Lemma~\ref{lem:pushforward-L1-wedgeQ}, \[
f_\ast (\scl_1 \otimes \bigwedge^d \scq_L) = \det \scq_{L/n} \otimes \scq_{L\setminus n}^\vee \otimes \bigwedge^{d-2} \scq_{L\setminus n}.
\] Therefore, this term fits into the $(d-1)$\/th exterior power of the third column of Diagram~\eqref{eq:diagram-downstairs} in the following way,
\begin{equation}
\label{eq:appendix-exterior-downstairs}
0\to f_\ast (\scl_1 \otimes \bigwedge^d \scq_L)  \to \bigwedge^{d-1} \scq_{L/n} \to \bigwedge^{d-1} \scq_{L\setminus n} \to 0.
\end{equation}
By the inductive hypothesis, the second and third items above have vanishing higher cohomology. Thus, taking cohomology of the sequence above, we obtain \[
H^i(X_{E\setminus n}, f_\ast (\scl_1 \otimes \bigwedge^d \scq_L)) = 0,\quad \text{for all $i> 1$ and $d\ge 0$.}
\]
Taking cohomology of Sequence~\eqref{eq:appendix-exterior-upstairs} and invoking the inductive hypothesis again, we obtain \[
H^p(X_{E\setminus n},f_\ast \bigwedge^d \scq_L) \simeq H^{p+1}(X_{E\setminus n}, f_\ast (\scl_1 \otimes \bigwedge^d \scq_L)), \quad\text{for $p>0$.}
\]
But we have shown that the right-hand side vanishes. The left-hand side equals $E^{p,0}_2$; therefore, the first assertion follows. To obtain the second assertion, we take cohomology of Sequences~\eqref{eq:appendix-exterior-upstairs} and \eqref{eq:appendix-exterior-downstairs}, obtaining two four-term exact sequences
\begin{align*}
0 & \to H^0(f_\ast (\scl_1 \otimes \bigwedge^{d-1} \scq_L)) \to H^0(\bigwedge^d (\scq_{L/n}\oplus \sco_{X_{E\setminus n}})) \to \\
& \to H^0(f_\ast \bigwedge^d \scq_{L}) \to  H^1(f_\ast (\scl_1 \otimes \bigwedge^{d-1} \scq_L))\to 0,\quad\text{and}\\
0 & \to H^0(f_\ast (\scl_1 \otimes \bigwedge^d \scq_L))  \to H^0(\bigwedge^{d-1} \scq_{L/n}) \to H^0( \bigwedge^{d-1} \scq_{L\setminus n}) \to H^1(f_\ast (\scl_1 \otimes \bigwedge^d \scq_L)) \to 0.
\end{align*}
Here, all cohomologies are taken in the variety $X_{E\setminus n}$.
Counting dimensions, we obtain
\begin{align*}
h^0(\bigwedge^d \scq_L) & = h^0(X_{E\setminus n}, f_\ast \bigwedge^d \scq_L) \\
& = h^0(\bigwedge^d (\scq_{L/n}\oplus \sco_{X_{E\setminus n}})) - h^0(f_\ast (\scl_1 \otimes \bigwedge^{d-1} \scq_L)) + h^1(f_\ast (\scl_1 \otimes \bigwedge^{d-1} \scq_L))\\
& =  h^0(\bigwedge^d (\scq_{L/n}\oplus \sco_{X_{E\setminus n}})) - h^0( \bigwedge^{d-1} \scq_{L/ n}) + h^0( \bigwedge^{d-1} \scq_{L\setminus n})\\
& =  h^0(\bigwedge^d \scq_{L/n})  + h^0(\bigwedge^{d-1} \scq_{L\setminus n}).
\end{align*}
This gives the second assertion.
\end{proof}

\printbibliography

\end{document}